\newcommand{\br}{\mathbb{R}}
\newcommand{\bn}{\mathbb N}
\newcommand{\bl}{\mathbb L}
\newcommand{\bS}{\mathbb S}
\newcommand{\al}{\alpha}
\newcommand{\be}{\beta}
\newcommand{\vp}{\varphi}
\newcommand{\ce}{\mathcal E}
\newcommand{\cf}{\mathcal F}
\newcommand{\ssm}{\smallsetminus}
\newcommand{\co}{\colon\thinspace}
\DeclareMathOperator{\cl}{cl}
\DeclareMathOperator{\h}{ht}
\renewcommand{\int}{\mathop{\mathrm{int}}}
\newtheorem{Thm}{Theorem}[section]
\newtheorem{Prop}[Thm]{Proposition}
\newtheorem{Lem}[Thm]{Lemma}
\newtheorem{Cor}[Thm]{Corollary}
\theoremstyle{definition}
\newtheorem{Def}[Thm]{Definition}
\theoremstyle{remark}
\newtheorem{Rem}[Thm]{Remark}
\newtheorem{Ex}[Thm]{Example}
\newtheorem{Notation}[Thm]{Notation}
\numberwithin{equation}{section}
\let\@wraptoccontribs\wraptoccontribs
\begin{document}

\title[Non-metrizable manifolds: ends and a general bagpipe theorem]{Ends of non-metrizable manifolds: \\
a generalized bagpipe theorem}

\author[D. Fern\'andez]{David Fern\'andez-Bret\'on}
\address{Departamento de Matem\'aticas\\
Centro de Investigaci\'on y Estudios Avanzados.
}

\curraddr{Escuela Superior de F\'{\i}sica y Matem\'aticas\\
Instituto Polit\'ecnico Nacional\\
%Av. Instituto Polit\'ecnico Nacional s/n Edificio 9, 
Col. San Pedro Zacatenco, Alcald\'{\i}a Gustavo A. Madero, 07738, CDMX, Mexico. 
}
\email{dfernandezb@ipn.mx %\\ 
%https://homepage.univie.ac.at/david.fernandez-breton/
}

\author[N. G. Vlamis]{Nicholas G. Vlamis}
\address{Department of Mathematics\\
Queens College, City University of New York\\
65-30 Kissena Blvd., Queens, NY 11367, U.S.A.
}
\email{nicholas.vlamis@qc.cuny.edu %\\
%http://math.nickvlamis.com/home
}

\contrib[An Appendix with]{Mathieu Baillif}

%    \thanks will become a 1st page footnote.

\date{\today}

\keywords{}

\begin{abstract}
We initiate the study of ends of non-metrizable manifolds and introduce the notion of short and long ends.
Using the theory developed, we provide a characterization of (non-metrizable) surfaces that can be written as the topological sum of a metrizable manifold plus a countable number of ``long pipes" in terms of their spaces of ends; this is a direct generalization of Nyikos's bagpipe theorem.
\end{abstract}

\maketitle

%\vspace{-.25cm}

\section{Introduction}

An $n$-manifold is a connected Hausdorff topological space that is locally homeomorphic to $\mathbb R^n$.
Often---especially in geometric and low-dimensional topology---second countability is also included as part of the definition; however, many more possibilities for manifolds arise when second countability is not required.
Manifolds that fail to be second countable are generally referred to as {\it non-metrizable manifolds}.
There has been much work devoted to understanding their structure from a set-theoretic  viewpoint~\cite{NyikosTheory,GauldNonmetrisable,GreenwoodThesis,GreenwoodTrees,GauldMappingClass}.

One of the key results regarding the study of non-metrizable $2$-manifolds is Nyikos's {\it bagpipe theorem}, characterizing ``bagpipes'', that is, a $2$-manifolds that can be decomposed into a compact manifold---the bag---plus a finite number of ``long pipes''. The main purpose of this paper is to offer a characterization of a broader class of $2$-manifolds, which we call ``general bagpipes''. 
Informally, a general bagpipe is a 2-manifold that can be decomposed into a metrizable manifold---a bigger bag---plus countably many ``long pipes". 
Our characterization combines the set-theoretic viewpoint of Nyikos (and others)  together with a tool commonly used in low-dimensional topology---the ends of a manifold,  which describe the possible ways in which one can ``go off to infinity'' within the manifold. We are able to describe various classes of manifolds in terms of their ends, culminating with a precise description of the structure of ends of general bagpipes.

\begin{Def}
Let\footnote{Throughout this paper, the set of natural numbers starts at $1$, so $\mathbb N=\{1,2,3,\ldots\}$.} $n\in\mathbb N$. A connected Hausdorff topological space $M$ is an {\bf $n$-manifold} if for every $x\in M$ there exists an open neighborhood $U$ of \( x \) that is homeomorphic to $\mathbb R^n$. A {\bf manifold} is a space that is an $n$-manifold for some $n$.
\end{Def}

Thus, throughout this paper we use the word ``manifold'' in its most general sense, without the second-countability restriction. It is well-known that a manifold is metrizable if and only if it is second countable, which is in turn the case if and only if it is Lindel\"of, and either of these is also equivalent to paracompactness (see~\cite[2.2, pp. 27--36]{GauldNonmetrisable} for a list of 119 properties that are equivalent to metrizability of a manifold). Hence, we will speak about metrizable and non-metrizable manifolds, accordingly; simply using the word {\it manifold} will not imply any assumption one way or the other. 

A very immediate example of a non-metrizable manifold is the long ray $\mathbb L^+$, defined as the linearly ordered topological space given by the product $\omega_1\times[0,1)$ (where $\omega_1$ denotes the first uncountable ordinal), equipped with the lexicographical order, and with the minimum point deleted. This topological space can be thought of, intuitively, as the result of gluing together $\omega_1$ many copies of the unit interval; clearly the non-metrizability of the long ray is related to the fact that $\omega_1$ is ``too long''. Nyikos introduced the concept of a {\it type I} manifold to formalize the intuition of a manifold that lacks metrizability because of its being ``too long'', though there are other examples of non-metrizable manifolds (for instance, the Pr\"ufer manifold, see Appendix~\ref{appendix-prufer}) that are non-metrizable for other reasons. A manifold $M$ is of type I if it can be written as $M=\bigcup_{\alpha<\omega_1}M_\alpha$, where each $M_\alpha$ is a metrizable open subspace such that $\cl_M(M_\alpha)\subseteq M_{\alpha+1}$ for all $\alpha<\omega_1$. 
Outside of dimension 1, the simplest examples of type I manifolds are long planes: a \emph{long plane} is a manifold $P$ which can be written as $P=\bigcup_{\alpha<\omega_1}M_\alpha$ with each $M_\alpha$ homeomorphic to $\mathbb R^2$ and the boundary of each $\cl_M(M_\alpha)$ is contained in $M_{\alpha+1}$ and is homeomorphic to the unit circle $\mathbb S^1$. 
A \emph{long pipe}\footnote{This definition of long pipe is slightly more restrictive that Nyikos's original definition.  See Definition~\ref{def:pipe} for a discussion of the definition of long pipe.} is a manifold obtained from deleting a point from a long plane; the simplest example of a long pipe is the product $ \mathbb S^1\times\mathbb L^+$, with $\mathbb L^+$ the long ray; there are, however, uncountably many  non-homeomorphic spaces that satisfy the definition of a  long pipe (see~\cite[p. 662 and Section 6]{NyikosTheory}).

\begin{Thm}[``The Bagpipe Theorem", Nyikos~\cite{NyikosTheory}]
Let $M$ be a $2$-manifold.
The closure of every countable subset of $M$ is compact if and only if there exist finitely many pairwise-disjoint embedded long pipes \( P_1, \ldots, P_n \) in \( M \) such that the complement of \( \bigcup_{j=1}^n P_j \) in \( M \) is compact. 
\end{Thm}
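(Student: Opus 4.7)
The forward implication is the easier one. Suppose $M=K\cup\bigcup_{j=1}^n P_j$ with $K$ compact and each $P_j$ a long pipe. Every long pipe, being of type I with an exhaustion $P_j=\bigcup_{\alpha<\omega_1}N_\alpha^j$ by metrizable open subsets having compact closures, is $\omega$-bounded: any countable $C_j\subseteq P_j$ meets each metrizable $N_\alpha^j$ in a countable set, and because $\omega_1$ has uncountable cofinality, the closure of $C_j$ in $P_j$ is contained in some $\cl_{P_j}(N_\alpha^j)$, hence compact. Taking $C\subseteq M$ countable and decomposing $C=(C\cap K)\cup\bigcup_j(C\cap P_j)$ gives $\cl_M(C)$ as a finite union of compact sets.

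For the reverse implication, assume every countable subset of $M$ has compact closure. First, I would invoke Nyikos's result that such a manifold is of type I, giving an exhaustion $M=\bigcup_{\alpha<\omega_1}M_\alpha$ with $\cl_M(M_\alpha)\subseteq M_{\alpha+1}$ and each $M_\alpha$ metrizable. The $\omega$-boundedness of $M$, combined with the fact that each $M_\alpha$ has a countable dense subset, forces each $\cl_M(M_\alpha)$ to be compact; thus the topological frontier $\partial M_\alpha:=\cl_M(M_\alpha)\setminus M_\alpha$ is a compact $1$-manifold embedded in $M$, hence a finite disjoint union of circles.

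Now let $n_\alpha$ be the number of non-compact components of $M\setminus M_\alpha$. For $\alpha\leq\beta$, every non-compact component of $M\setminus M_\beta$ sits inside a unique non-compact component of $M\setminus M_\alpha$, and conversely each non-compact component of $M\setminus M_\alpha$ contains at least one non-compact component of $M\setminus M_\beta$ (since otherwise it would be a closed subset of some compact $\cl_M(M_\gamma)$, contradicting non-compactness). Hence $n_\alpha\leq n_\beta$. Also, $n_\alpha\leq|\pi_0(\partial M_\alpha)|<\infty$. A non-decreasing function $\omega_1\to\mathbb N$ must be eventually constant (since $\omega_1$ is not the countable union of bounded initial segments), so there exists $\alpha_0<\omega_1$ and $n\in\mathbb N$ with $n_\alpha=n$ for all $\alpha\geq\alpha_0$. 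Enumerate coherently: for $\alpha\geq\alpha_0$, label the non-compact components as $U_1^\alpha,\ldots,U_n^\alpha$ so that $U_i^\beta\subseteq U_i^\alpha$ whenever $\alpha_0\leq\alpha\leq\beta$.

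The main obstacle is extracting honest \emph{long pipes} from the ends $U_i^\alpha$. My plan is to set $P_i$ to be $U_i^{\alpha_0}$ with the (bounded) compact components of $M\setminus M_\beta$ lying inside $U_i^{\alpha_0}$ removed; equivalently, $P_i=\bigcup_{\beta\geq\alpha_0}\bigl((M_\beta\cap U_i^{\alpha_0})\cup U_i^\beta\bigr)$ after the compact junk has been absorbed into the bag. It is straightforward that the $P_i$ are pairwise-disjoint open connected subsets and that $M\setminus\bigcup_i P_i$ is compact (by construction it is closed inside the compact enlargement of $\cl_M(M_{\alpha_0})$ obtained by adding back the compact components). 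The nontrivial step is showing that $P_i$ is homeomorphic to a long pipe. For this I would use the exhaustion $N_\beta^i:=P_i\cap M_\beta$ ($\beta\geq\alpha_0$): each compact difference $\cl_{P_i}(N_{\beta'}^i)\setminus N_\beta^i$ is a compact surface with two circle boundary components, hence, by the classification of compact surfaces, an annulus plus possibly some handles and cross-caps. The hard part is disposing of those handles and cross-caps: by passing to a sufficiently sparse cofinal subsequence $(\beta_\xi)_{\xi<\omega_1}$ and rearranging, one shows that only countably many stages carry extra topology, so these can be absorbed into a larger compact ``bag'' while preserving the annular (pipe) structure beyond it. Once the pieces $\cl_{P_i}(N_{\beta_{\xi+1}}^i)\setminus N_{\beta_\xi}^i$ are all annuli, $P_i$ is easily seen to be a long plane with a point removed, i.e., a long pipe. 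Finally, after replacing $\alpha_0$ with the (necessarily countable) stage at which all extra topology has been absorbed, the set $M\setminus\bigcup_i P_i$ is compact, completing the proof.
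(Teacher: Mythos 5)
First, a remark on the comparison itself: the paper does not prove this statement --- it quotes it as Nyikos's Bagpipe Theorem \cite[Theorem 5.14]{NyikosTheory} and uses it as a black box (e.g.\ in the proof of Theorem~\ref{thm:big-bag}). So your proposal can only be judged against Nyikos's argument, whose architecture (type~I exhaustion with compact closures, stabilization of the number of unbounded complementary components, then promotion of each end--piece to a long pipe) your sketch does broadly follow. There are, however, two concrete problems.

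In the forward direction, your claim that a long pipe is \( \omega \)-bounded is false: by this paper's definition a long pipe is a long plane minus a point, it is two-ended with one \emph{short} end (the paper says this explicitly after Definition~\ref{def:pipe}), and a manifold with a short end is never \( \omega \)-bounded --- e.g.\ in \( \bS^1 \times \bl^+ \) a sequence marching toward the deleted origin is countable with non-compact closure in the pipe. Correspondingly, the canonical exhaustion pieces \( A_\al \cong \bS^1 \times (0,\infty) \) do \emph{not} have compact closures in \( P_j \), so the premise of your cofinality argument fails. The implication is still true, but the closure must be taken in \( M \): a countable \( C_j \subseteq P_j \) lies in some \( A_\al \) by uncountable cofinality, and \( \cl_M(A_\al) \) is compact because the frontier points on the short side land in the compact complement \( K \). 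In the reverse direction, the decisive step --- ``only countably many stages carry extra topology, so these can be absorbed into a larger compact bag'' --- is stated in a form that does not suffice: countably many bad stages could be \emph{cofinal} in \( \omega_1 \), and then no compact bag absorbs them. What you need, and what is true, is that the set of stages contributing genus, cross-caps, or additional boundary circles is \emph{bounded} below some \( \be_0 < \omega_1 \); this follows because the genus (and non-orientability count, and the number of unbounded complementary components) of \( \cl_{P_i}(N_\be^i) \) is a non-decreasing \( \mathbb{N} \)-valued function of \( \be \), hence eventually constant by the same convexity argument you used for \( n_\al \). You should also justify that each piece \( \cl_{P_i}(N_{\be'}^i) \ssm N_\be^i \) has exactly two boundary circles (a priori \( \partial(U_i^\be) \) may consist of several circles; this again needs a stabilization argument, or Nyikos's trunklike property), and that the frontiers \( \partial M_\al \) can be taken to be compact \( 1 \)-manifolds at all, which requires refining the exhaustion so that each \( \cl_M(M_\al) \) is a compact bordered subsurface (cf.\ Proposition~\ref{prop:submanifold}) rather than being automatic.
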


The 2-manifolds satisfying the conditions of the Bagpipe Theorem are called \emph{bagpipes} and are the non-metrizable generalization of a compact 2-manifold.

An important tool for the study of non-compact metrizable manifolds are the so-called {\it ends} of a manifold $M$. Ends are elements of the remainder $\mathcal F(M)\setminus M$ of the {\it Freudenthal compactification} $\mathcal F(M)$ of the manifold $M$, which---intuitively speaking---constitutes a way of adding points at infinity that the manifold $M$ is missing (formal definitions will be stated in Section~\ref{sec:ends}). Although ends of a metrizable manifold are more or less understood, the authors are unaware of a study of ends of non-metrizable manifolds; we initiate this investigation here and develop some structure theory for end spaces of non-metrizable manifolds.

One interesting aspect is how individual ends themselves might sit within the Freudenthal compactification of the manifold, especially in ways that are not seen in the metrizable case. This leads to the definition of a {\it short end} and a {\it long end}, trying to capture the fact that some ends might be reached after an infinite, but countable, amount of time, while others might require $\omega_1$-many units of time to be reached (once again, formal definitions will be stated in Section~\ref{sec:ends}); thus, every end of a metrizable manifold is short, whereas, for example, long planes have a unique end, which is long. Surprisingly, some non-metrizable manifolds can have ends that are neither short nor long. This forces us to restrict our study precisely to those manifolds for which every end is either short or long; these manifolds will be said to have the {\it end dichotomy property}, or \emph{EDP} for short.

The structure of the space of ends of a non-metrizable manifold turns out to provide very detailed information about the manifold itself. In fact, we prove:

\begin{Thm}
\label{mainthm1}
Let \( M \) be a type I manifold.
\( M \) is metrizable if and only if the end space \( \cf(M) \ssm M \) of \( M \) is second countable and every end of \( M \) is short. 
\end{Thm}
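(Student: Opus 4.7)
If $M$ is metrizable then $M$ is second countable, so the Freudenthal compactification $\cf(M)$ is a metrizable compactification of $M$, and in particular $\cf(M)\ssm M$ is second countable. Moreover, in a second-countable manifold each end has a countable neighborhood basis in $\cf(M)$ and is approached by a sequence from $M$, so every end is short; this should be immediate from whatever definition of ``short end'' is adopted in the preceding sections.

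\textbf{Reverse direction.} Assume $M=\bigcup_{\alpha<\omega_1}M_\alpha$ is type I, $E:=\cf(M)\ssm M$ is second countable, and every end of $M$ is short. The plan is to show that $M$ is Lindel\"of, which for manifolds is equivalent to metrizability. For each $e\in E$, I expect the shortness hypothesis to deliver an open neighborhood $V_e$ of $e$ in $\cf(M)$ such that $V_e\cap M$ is Lindel\"of---intuitively, $e$ is reached in countable time, so some neighborhood of $e$ meets $M$ only inside a metrizable region carved out by the type I exhaustion $\{M_\alpha\}$. Granting this, $\{V_e:e\in E\}$ is a cover of $E$ by open subsets of $\cf(M)$; since $E$ is second countable, it is Lindel\"of, so we can extract a countable subcover $\{V_{e_n}\}_{n\in\bn}$ of $E$. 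The set $K:=\cf(M)\ssm\bigcup_{n\in\bn}V_{e_n}$ is then closed in the compact Hausdorff space $\cf(M)$, hence compact, and $K\cap E=\emptyset$ forces $K\subseteq M$. Therefore
\[ M = K \cup \bigcup_{n\in\bn}(V_{e_n}\cap M) \]
is a countable union of Lindel\"of subspaces, so $M$ itself is Lindel\"of, as required.

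\textbf{Main obstacle.} The substantive step is constructing the Lindel\"of neighborhoods $V_e$ from the definition of a short end, and this is where the interplay between the Freudenthal compactification and the type I decomposition must be used in earnest. I anticipate the argument will proceed by taking a countable descending neighborhood basis of $e$ in $\cf(M)$, using shortness to find a countable ordinal $\alpha$ such that some basis element $V_e$ intersects $M$ only within a single connected component of $M\ssm\cl_M(M_\alpha)$ that is itself metrizable (so in particular Lindel\"of). A secondary but conceptually important point is that a countable union of open Lindel\"of subspaces of a manifold is Lindel\"of, which holds because manifolds are regular. The remaining pieces of the argument---compactness of $K$ and the passage from Lindel\"of to metrizable for manifolds---are standard and already recorded in the introduction.
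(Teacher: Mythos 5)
The forward direction and the formal skeleton of your reverse direction (extracting a countable subcover of the end space, compactness of the leftover set $K$, a countable union of Lindel\"of subspaces being Lindel\"of) are all fine. The fatal problem is exactly the step you flag as the ``main obstacle'': the claim that every short end $e$ admits a neighborhood $V_e$ in $\cf(M)$ with $V_e\cap M$ Lindel\"of. This is false, and it cannot be derived from shortness together with either of the other two hypotheses taken separately; the paper's own appendices supply counterexamples. In the Aronszajn-tree manifold of Appendix~A (type I, all ends short), an end $e$ sitting at a limit level of the tree has, as a neighborhood basis in $\ce(M)$, the sets $\hat V$ where $V$ is the union of all building blocks lying above some node below $e$; each such $V$ contains uncountably many blocks and hence an uncountable closed discrete set, so it is not Lindel\"of. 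Any open $U\subseteq M$ with compact boundary and $e\in\hat U$ satisfies $\hat U\supseteq\hat V$ for one of these $V$, which forces $V\ssm\cl_M(U)$ to be bounded (otherwise it would be an unbounded open set with compact boundary contributing an end of $\hat V\ssm\hat U$); hence $U$ contains $V$ minus a compact set and is not Lindel\"of. Similarly, in the Pr\"ufer--Moore manifold of Appendix~B (all ends short \emph{and} $\ce(M)$ second countable, but not type I), every neighborhood of every end swallows uncountably many of the attached lines and is again non-Lindel\"of. So your lemma becomes true only under the full set of hypotheses, at which point it is a consequence of the theorem rather than a step toward it: shortness controls how an end sits inside $\cf(M)$ as a $G_\delta$ point, not how much of $M$ its neighborhoods must contain. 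Your proposed mechanism---that some basis element of $e$ meets $M$ inside a single metrizable component of $M\ssm\cl_M(M_\alpha)$---fails for the same reason: those components need not be metrizable, and in the Aronszajn example they never are.

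The paper's proof is organized the other way around. Using second countability of $\ce(M)$ it builds a single Lindel\"of open core $U=\bigcup_{n}K_n$ that \emph{captures} every end (any two ends are separated by an open set whose compact boundary lies in $U$); it then shows that each unbounded component $Q$ of $M\ssm\cl_M(U)$ has at most one end in its closure, and uses the shortness of that single end to write $Q$ as a countable union of bounded---hence separable---pieces $Q\ssm V_n$. This yields separability of $M$; the type I hypothesis enters only to show that $M\ssm\cl_M(U)$ has countably many components and to upgrade separability to Lindel\"ofness. In other words, shortness is applied to the complementary components of a globally constructed Lindel\"of core, not to manufacture Lindel\"of neighborhoods of the individual ends. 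If you want to repair your argument, you would need to replace your pointwise covering of $E$ by such a global construction.
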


For the sake of clarity, here in the introduction we will state all of our results in terms of type I manifolds; however, our theorems---and their proofs---are slightly more general.

The characterization of metrizability from Theorem~\ref{mainthm1} is crucial to the proof of the main theorem of this paper, which we state next.

\begin{Thm}[``The General Bagpipe Theorem"]
\label{mainthm2}
Suppose that $M$ is a 2-manifold of type I.
The space of ends \( \mathcal F(M) \ssm M \) of \( M \) is second countable and \( M \) has the end dichotomy property if and only if there exist at most countably many pairwise-disjoint embedded long pipes \( \{P_i\}_{i\in I} \) in \( M \) such that the complement of \( \bigcup_{i\in I} P_i \) in \( M \) is Lindel\"of. 
\end{Thm}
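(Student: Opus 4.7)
The plan is to establish both implications of the biconditional. The backward direction is fairly direct: if $M=L\cup\bigcup_{i\in I}P_i$ with $I$ countable, the $P_i$ pairwise disjoint embedded long pipes, and the complement $L$ Lindel\"of, then the Freudenthal compactification can be controlled separately on each piece. Each $P_i$ contributes exactly one long end (built into the definition of long pipe), while $L$, being Lindel\"of, has a second countable end space consisting only of short ends. Gluing along the compact boundary circles creates no new ends, so $\cf(M)\ssm M$ is a countable disjoint union of second countable pieces---hence second countable---and every end is either short (coming from $L$) or long (coming from some $P_i$), giving the end dichotomy property.

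The forward direction requires more work, and I would proceed in three steps. First, I claim that every long end $e$ is isolated in $\cf(M)\ssm M$: by EDP any other nearby end is short or long, and the defining property of a long end (essentially, that it is approached only along an $\omega_1$-sequence, whereas short ends sit inside countably tame neighborhoods) should prevent other ends from accumulating on it. Since $\cf(M)\ssm M$ is compact Hausdorff and second countable, it is metrizable and separable, so its set of isolated points is countable; hence there are at most countably many long ends. Second, around each long end $e$ I construct an open neighborhood $U_e$ of $e$ in $M$ whose closure is an embedded long pipe: using the type I exhaustion $M=\bigcup_{\alpha<\omega_1}M_\alpha$, select cofinally in $\omega_1$ simple closed curves $C_\alpha\subset M$ that separate $e$ from a fixed metrizable neighborhood and that bound nested annular regions. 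Piecing these annuli together along the $C_\alpha$ realizes an embedded long pipe neighborhood of $e$, and Hausdorffness of $\cf(M)$ together with isolation of long ends allows the family $\{U_e\}_{e}$ to be shrunk to be pairwise disjoint.

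Third, let $N:=M\ssm\bigcup_e U_e$. Then $N$ is a type I subsurface (with boundary consisting of the circles at the bases of the pipes), every end of $N$ is short (since the long ends of $M$ have been removed), and the end space of $N$ embeds into $\cf(M)\ssm M$, so it remains second countable. Applying Theorem~\ref{mainthm1}---either in a boundary-aware version, or by doubling $N$ along its boundary circles to obtain an open type I 2-manifold with the same ends---yields that $N$ is metrizable, equivalently Lindel\"of, completing the required decomposition.

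The principal obstacle I foresee is the second step: producing a concrete long-pipe neighborhood of a given long end. This amounts to upgrading abstract end-theoretic information (``$e$ is long'') into geometric data (an $\omega_1$-indexed nested sequence of separating circles with annular complements) and then recognizing the resulting union as a long pipe in the sense of Definition~\ref{def:pipe}. A secondary technicality is handling the boundary when applying Theorem~\ref{mainthm1} to the complement $N$, and verifying that EDP is genuinely used here (rather than only second countability of the end space), which is where a non-short, non-long ``rogue'' end would otherwise obstruct the construction.
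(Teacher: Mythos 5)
Your overall architecture matches the paper's: isolate the long ends, build pipe neighborhoods around them, and show the complement is Lindel\"of via the metrizability criterion (Theorem~\ref{mainthm1}). However, the step you yourself flag as the principal obstacle---upgrading a long end to an embedded long pipe neighborhood---is a genuine gap, and it is exactly the point where the paper takes a different (and essential) route. You propose to choose, cofinally in \( \omega_1 \), separating simple closed curves \( C_\al \) ``that bound nested annular regions'' and to recognize the union as a long pipe. But there is no a priori reason the region between \( C_\al \) and \( C_{\al+1} \) is an annulus: it could have positive genus or more than two boundary components, and showing that the genus and the number of boundary components must eventually stabilize along the \( \omega_1 \)-exhaustion is precisely the content of Nyikos's original Bagpipe Theorem. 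The paper does not reprove this; instead it shows that a suitable neighborhood \( U_\ell \) of a long end \( \ell \) with \( \hat U_\ell = \{\ell\} \) has \( \omega \)-bounded closure (via Theorem~\ref{lem:finite-long}, since its unique end is long) and then invokes Nyikos's \cite[Corollary 5.16]{NyikosTheory} to extract the long pipe. As written, your step 2 would require redoing Nyikos's argument from scratch. A smaller but related point: your step 1 is correct but overcomplicated---a long end is by definition a weak \( P \)-point, and in a second countable (hence first countable) end space any non-isolated point is an accumulation point of a countable set, so long ends are isolated without any appeal to EDP (this is Lemma~\ref{lem:isolated}).

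There is also an error in step 3: the end space of \( N \) does not embed into \( \cf(M) \ssm M \). The relationship goes the other way: \( \ce(M) \) is a finite-to-one quotient of \( \ce(N) \) (Corollary~\ref{cor:nice-subspace}), because removing the pipes creates new ends of \( N \) at the pipe bases, several of which can map to a single long end of \( M \). One must separately verify that these new ends are short (the paper does this using the compact boundary components and the relative Freudenthal compactification) and that \( \ce(N) \) is still second countable (which follows because the quotient map has only countably many non-singleton fibers, all finite). The conclusion you want is true, but the stated justification would not compile into a proof. Your backward direction and your closing remark about where EDP is genuinely needed (to rule out ends that are neither short nor long, as in \( \bl \times \br \)) are consistent with the paper, though the paper routes the backward direction through the General Bagpipe Lemma and the same relative-compactification machinery rather than an informal gluing argument.
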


We call the 2-manifolds satisfying the conditions of the General Bagpipe Theorem \emph{general bagpipes}. 
We will see that both the end dichotomy property and the second countability of the space of ends are necessary by providing examples of 2-manifolds of type I that are not general bagpipes yet they satisfy one of these two conditions.

The heavy lifting in the General Bagpipe Theorem is a theorem about manifolds of arbitrary dimension that itself generalizes a result of Nyikos, the Bagpipe Lemma \cite[Theorem 5.9]{NyikosTheory}.
Before stating his result, we need a definition: 
A locally connected space \( M \) is \emph{trunklike} if, given any closed Lindel\"of subset \( C \), the set \( M \ssm C \) has at most one non-Lindel\"of component. 

\begin{Thm}[``The Bagpipe Lemma'' {\cite[Theorem 5.9]{NyikosTheory}}]
Let \( M \) be a manifold such that the closure of every countable subset of \( M \) is compact. Then, there exist finitely many pairwise-disjoint embedded trunklike manifolds \( T_1, \ldots, T_n \) in \( M \) such that the complement of \( \bigcup_{j=1}^n T_j \) in \( M \) is Lindel\"of.
\end{Thm}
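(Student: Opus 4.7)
The plan is to construct a Lindel\"of open subset $L\subset M$ with compact closure such that $M\ssm\cl(L)$ has only finitely many non-Lindel\"of components, and each such component is trunklike. Taking the $T_j$'s to be these non-Lindel\"of components---which are automatically pairwise disjoint embedded open submanifolds---the complement $M\ssm\bigcup_j T_j$ equals $\cl(L)$ together with the remaining (Lindel\"of) components of $M\ssm\cl(L)$; provided there are at most countably many components in total, this complement is a countable union of Lindel\"of sets and hence Lindel\"of.

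I would first record a few preliminary consequences of the hypothesis. In $M$, every Lindel\"of subspace is separable and therefore has compact metrizable closure; countable unions of Lindel\"of sets remain Lindel\"of; and for any Lindel\"of open $L$ with compact closure, $\partial\cl(L)$ is compact. Covering $\partial\cl(L)$ by finitely many metrizable chart balls and observing that each component of $M\ssm\cl(L)$ must accumulate on $\partial\cl(L)$ (else it would be clopen in the connected $M$), one concludes that there are at most countably many such components.

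Next, I would enforce the trunklike condition by a transfinite induction on countable ordinals. Starting from any Lindel\"of open $L_0$ with compact closure, at a successor stage $\alpha+1$ I ask whether some non-Lindel\"of component $C$ of $M\ssm\cl(L_\alpha)$ fails to be trunklike; if so, by definition $C$ contains a closed Lindel\"of set $K$ whose removal splits $C$ into at least two non-Lindel\"of pieces, and I let $L_{\alpha+1}$ be a Lindel\"of open set with compact closure containing $\cl(L_\alpha)\cup K$. Limit stages are handled by unions, which remain Lindel\"of with compact closure by the preliminary facts. The process must stabilize at some countable ordinal: otherwise the iterated branching produces an uncountable tree of nested non-Lindel\"of pieces, from which one extracts an uncountable closed discrete subset of $M$, contradicting $\omega$-boundedness.

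Finally, I would rule out infinitely many non-Lindel\"of components at the stable stage. If $\{C_n\}$ were infinite, choose $p_n\in C_n$; by $\omega$-boundedness $\{p_n\}$ has an accumulation point $q$, which must lie in $\partial\cl(L)$ because the $C_n$ are pairwise disjoint open sets. Any connected chart neighborhood $V$ of $q$ then meets infinitely many of the $C_n$, producing infinitely many pairwise disjoint nonempty open sets $V\cap C_n$ inside $V$; combined with the trunklikeness of each $C_n$, a careful local analysis should contradict either the Euclidean structure of $V$ or the non-Lindel\"ofness of the $C_n$. This finiteness step is where I expect the principal obstacle, since the local structure of $V\ssm\cl(L)$ is perfectly compatible with arbitrarily many components, and the contradiction must come from the interaction of the global non-Lindel\"of structure of the $C_n$ with their trunklikeness.
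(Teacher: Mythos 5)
The paper does not prove this statement---it is quoted verbatim from Nyikos with a citation and a \( \square \)---so the only comparison available is with Nyikos's original argument, which runs through the type~I structure of \( \omega \)-bounded manifolds and a club argument on a canonical sequence \( \langle M_\al \rangle_{\al<\omega_1} \). Your architecture (grow a Lindel\"of open core transfinitely, at each active stage absorbing a closed Lindel\"of witness to the failure of trunklikeness of some non-Lindel\"of complementary component, and stop when every such component is trunklike) is a legitimate alternative route, and much of it is sound: Lindel\"of subsets of an \( \omega \)-bounded manifold do have compact closure, countable limit stages do stay Lindel\"of, and the ccc argument near the compact boundary does bound the number of complementary components by \( \aleph_0 \).

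The genuine gap is exactly where you flag it: finiteness of the number of non-Lindel\"of components. Your accumulation-point argument picks \( p_n \in C_n \) arbitrarily, so the accumulation point \( q \) is only forced into \( \partial(\cl_M(L)) \), i.e.\ \emph{inside} the compact core, where---as you concede---nothing is contradicted: a chart around a boundary point can perfectly well meet infinitely many pairwise disjoint components. The repair is to place the witness points on a compact shell strictly outside the core. Take a compact \( N \) with \( \cl_M(L) \subseteq \int(N) \). Each non-Lindel\"of component \( C_n \) satisfies \( \cl_M(C_n) \ssm C_n \subseteq \cl_M(L) \subseteq \int(N) \), hence meets \( \int(N) \), and is not contained in \( N \); by connectedness it meets the compact set \( N \ssm \int(N) \), so choose \( q_n \) there. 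Since \( N \ssm \int(N) \) is compact and disjoint from \( \cl_M(L) \), an accumulation point \( q \) of \( \{q_n\} \) lies in \( M \ssm \cl_M(L) \), hence in some open component \( C \), which then meets infinitely many of the pairwise disjoint \( C_n \)---a contradiction. (Alternatively, the paper's Lemma~\ref{lem:finite} already lets you fatten any compact set so that its complement has only \emph{finitely many} components of any kind, with no use of \( \omega \)-boundedness; normalizing each \( L_\al \) this way disposes of the issue at once.)

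Once finiteness is in hand, your stabilization step should be rerouted: the number \( n(\al) \) of non-Lindel\"of components of \( M \ssm \cl_M(L_\al) \) is finite, non-decreasing in \( \al \) (each non-Lindel\"of component at a later stage sits inside a unique one at an earlier stage, and each earlier one must contain at least one later one, since otherwise it would be a countable union of Lindel\"of pieces), and strictly increases at every active successor stage; hence there are only finitely many active stages and the process halts. As written, your tree argument does not close: uncountably many pairwise disjoint non-Lindel\"of open sets do not by themselves yield an infinite closed discrete subset---the long line is \( \omega \)-bounded and already contains uncountably many pairwise disjoint open intervals---so the claimed contradiction with \( \omega \)-boundedness does not materialize from that construction alone.
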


Again, we generalize by allowing countably many trunklike manifolds in the decomposition.
We define a \emph{long trunk} to be a type I trunklike manifold with the end dichotomy property and finitely many ends, exactly one of which is long. 

\begin{Thm}[``The General Bagpipe Lemma"]
\label{mainthm3}
Let \( M \) be a  type I manifold.
The space of ends \( \mathcal F(M) \ssm M \) of \( M \) is second countable and \( M \) has the end dichotomy property if and only if there exist at most countably many pairwise-disjoint embedded long trunks \( \{T_i\}_{i \in I} \) in \( M \) such that the complement of the closure of \( \bigcup_{i\in I} T_i \) in \( M \) is Lindel\"of.
\end{Thm}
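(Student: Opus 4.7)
The plan is to prove the two implications separately, using the fact that ends respect decomposition along embedded submanifolds for the forward direction, and isolating long ends into trunk-shaped neighborhoods for the backward direction. The forward direction should be relatively direct once the behavior of ends under decomposition is in hand, while the backward direction is the substantive part and will culminate in an application of Theorem~\ref{mainthm1} to a suitable residue.

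For the forward direction, suppose $\{T_i\}_{i\in I}$ is a countable family of pairwise-disjoint embedded long trunks in $M$ with $L:=M\ssm\cl_M(\bigcup_{i}T_i)$ Lindel\"of. I would verify that each end of $M$ arises either as an end of $L$ or as an end of some $T_i$ (with appropriate identifications at shared frontier components). Since $L$ is Lindel\"of, hence metrizable, its end space is second countable and consists entirely of short ends. Each $T_i$, by the definition of a long trunk, has only finitely many ends, exactly one of which is long. Assembling these contributions, $\cf(M)\ssm M$ is a countable union of second-countable pieces and is therefore itself second countable, and every end is either short (coming from $L$ or from a short end of some $T_i$) or long (the distinguished long end of some $T_i$), so EDP holds.

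For the backward direction, assume $\cf(M)\ssm M$ is second countable and $M$ has EDP. First I would show that the set $E_\ell$ of long ends of $M$ is at most countable: an accumulation of long ends in $\cf(M)\ssm M$ would require an uncountable local neighborhood base (since each long end has only $\omega_1$-indexed neighborhood systems within $M$), contradicting second countability. Next, for each $e\in E_\ell$, I would construct a long trunk $T_e$ by selecting a clopen neighborhood $U_e$ of $e$ in $\cf(M)$ containing no other long ends and only finitely many short ends---possible by the zero-dimensionality of $\cf(M)\ssm M$ together with the countability of $E_\ell$---and then setting $T_e:=U_e\cap M$. After shrinking the $U_e$ to be pairwise disjoint, each $T_e$ should be type I (inherited from $M$), trunklike, satisfy EDP, and have finitely many ends with exactly one long end, hence be a long trunk. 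Finally, the residue $R:=M\ssm\cl_M(\bigcup_{e}T_e)$ has only short ends and a second-countable end space, so Theorem~\ref{mainthm1} yields that $R$ is metrizable and, in particular, Lindel\"of.

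The main obstacle is verifying that each $T_e$ is trunklike, meaning that removing any closed Lindel\"of subset of $T_e$ disconnects off at most one non-Lindel\"of component. This requires a careful local analysis of how the type I exhaustion $\{M_\alpha\}_{\alpha<\omega_1}$ of $M$ restricts to $T_e$, with the goal of concentrating all non-metrizable behavior at the single long end $e$; in effect, this adapts Nyikos's original Bagpipe Lemma to a local setting at a single long end. A secondary subtlety is arranging the countably many $T_e$ to be simultaneously disjoint while ensuring that the residue's end space remains short and second countable---this should follow from the countability of $E_\ell$, the zero-dimensionality of $\cf(M)\ssm M$ at long ends, and the openness of the complement of $\bigcup_e U_e$ in $\cf(M)$.
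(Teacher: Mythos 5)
Your architecture coincides with the paper's proof of Theorem~\ref{thm:gen-bag-lemma}: the easy direction tracks how ends of \( M \) decompose over the Lindel\"of core and the trunks, and the substantive direction isolates each long end inside a trunk-shaped neighborhood and applies the metrizability criterion to what remains. Two intermediate steps, however, rest on reasoning that does not work as stated. First, the countability of \( E_\ell \): your justification (``each long end has only \( \omega_1 \)-indexed neighborhood systems'') is not the definition of a long end, and ``long ends cannot accumulate'' is not the relevant fact---isolated points can perfectly well accumulate at some other point. The correct mechanism is Lemma~\ref{lem:isolated}: a long end is a weak \( P \)-point of \( \cf(M) \), and in a first-countable end space every non-isolated point is a limit of a countable sequence, so every long end is isolated; a second-countable space has only countably many isolated points. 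This gives you more than you ask for: one may take \( \hat{U_e} = \{e\} \) exactly, rather than allowing finitely many stray short ends. (Also, \( \cf(M) \) is connected, so ``clopen neighborhood in \( \cf(M) \)'' should read ``clopen in \( \ce(M) \)'' or a basic set \( U \cup \hat U \).) Second, the obstacle you single out---trunklikeness of \( T_e \)---does not require a localized Bagpipe Lemma: the paper's route is to observe that \( \cl_M(T_e) \) is one-ended with a long end, hence \( \omega \)-bounded by the argument of Theorem~\ref{lem:finite-long} and therefore of type I; the remaining long-trunk properties then follow from the end structure (a non-Lindel\"of complementary piece must account for the unique long end, since by Theorem~\ref{thm:characterize} a piece with only finitely many ends, all short, is Lindel\"of).

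The genuine gap is in your last step. The residue \( R = M \ssm \cl_M\bigl(\bigcup_e T_e\bigr) \) need not be connected, so Theorem~\ref{mainthm1}---a statement about manifolds, which are connected by definition in this paper---does not apply to it directly, and Lindel\"ofness of \( R \) is precisely what fails if its components are not controlled. The paper spends the bulk of its proof on this point: it joins the boundary components of the trunk neighborhoods by paths, thickens them into compact bordered submanifolds with bi-collared boundary (Proposition~\ref{prop:submanifold}), and thereby produces a \emph{connected} open set \( U \) whose closure has compact boundary components. It must then verify that \( U \) captures every end of \( M \) (for a short end this requires shaving off the finitely many trunks met by the compact boundary of a separating set), and that \( \ce(U) \) is second countable with every end short; the latter goes through the relative Freudenthal compactification machinery (Lemmas~\ref{lem:relative-projection}--\ref{lem:surjection-onto-relative} and Corollary~\ref{cor:nice-subspace}), with the new ends of \( U \) lying over long ends of \( M \) shown to be short precisely because the corresponding frontier components are compact \( G_\delta \) sets. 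None of this follows from ``countability of \( E_\ell \), zero-dimensionality, and openness'' alone, and taking \( T_e = U_e \cap M \) for a basic neighborhood does not by itself produce compact, bi-collared frontier---which is exactly what makes the residue's new ends short. Only after all of this does Proposition~\ref{prop:sep-omega} (the strengthening of Theorem~\ref{mainthm1}) close the argument.
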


By restricting either the General Bagpipe Lemma or the General Bagpipe Theorem to manifolds in which every countable set has compact closure, we obtain yet another characterization of Nyikos's bagpipes, which are precisely those 2-manifolds that have a finite number of ends, all of which are long (see Theorem~\ref{lem:finite-long}). 

Finally, as noted, the proofs of these theorems below yield slightly more general results than stated here.  
In particular, the forward implications of Theorem~\ref{mainthm2} and Theorem~\ref{mainthm3} still hold if \( M \) is no longer required to be of type I, but its end space is required to be countable (instead of second countable).

We will see in the appendices that these assumptions cannot be weakened further:
In Appendix~\ref{sectionexamples}, we construct an example of a type I manifold, with all ends short, that is not metrizable---illustrating that the hypothesis that the end space is second countable in Theorem~\ref{mainthm2} and Theorem~\ref{mainthm3} is necessary.
In Appendix~\ref{appendix-prufer}, we construct an example of a non-metrizable manifold, not of type I, with all ends short and second-countable end space---illustrating that the hypothesis that the manifold be of type I in Theorem~\ref{mainthm2} and Theorem~\ref{mainthm3} is necessary\footnote{An earlier version of this paper conjectured that the type I assumption could be dropped in Theorem~\ref{mainthm2} and Theorem~\ref{mainthm3}, but M. Baillif provided a counterexample, which is now the core content of Appendix~\ref{appendix-prufer}.}.

This paper's high-level structure is as follows: in Section~\ref{sec:ends} we proceed to state a few basic definitions regarding the Freudenthal compactification, as well as the definitions of long and short ends.
In Section~\ref{sec:criterion}, we provide a criterion for metrizability of a manifold in terms of its Freudenthal compactification, proving Theorem~\ref{mainthm1} as Theorem~\ref{thm:characterize}.
In Section~\ref{sec:bagpipe} we prove Theorem~\ref{mainthm2} (see Theorem~\ref{thm:big-bag}) and Theorem~\ref{mainthm3} (see Theorem~\ref{thm:gen-bag-lemma}) generalizing Nyikos's Bagpipe Theorem and Bagpipe Lemma, respectively.
The paper ends with two appendices described in the previous paragraph.

\section*{Acknowledgments}

This collaboration began while the two authors were postdoctoral fellows at the University of Michigan (during this time, the second author was partially supported by NSF RTG grant 1045119). Afterwards, the first author was partially supported by the Consejo Nacional de Ciencia y Tecnolog\'{\i}a (grant FORDECYT \# 265667), and the second author received support from PSC-CUNY grant 62571-00 50. Thanks are also due to Rodrigo Hern\'andez-Guti\'errez for pointing us to several useful references, to David Gauld for making us aware of some inaccuracies in an earlier version of this paper, as well as to the anonymous referee for a nontrivial number of nontrivial improvements.

%----------------------------
%  Ends
%----------------------------
\section{Ends and the Freudenthal compactification}
\label{sec:ends}

The classification of non-compact second-countable 2-manifolds relies on the notion of a topological end, which codifies the idea of escaping to infinity in a topological space.
The general theme of this paper is exploring topological ends in non-metrizable manifolds and understanding to what extent the information about the structure at infinity controls the overall topological structure of the underlying manifold.

\subsection{The Freudenthal compactification}
In a search for invariants of topological groups, Freudenthal~\cite{Freudenthal} was among the first to rigorously define a topological end.
The ends can be used to compactify a space, obtaining what is referred to as the \emph{Freudenthal compactification}. When working with a hemicompact locally compact topological space\footnote{A topological space $X$ is hemicompact if there is a countable sequence $\langle K_n\big|n\in\mathbb N\rangle$ of compact subsets such that for every compact $K\subseteq X$ there is an $n\in\mathbb N$ with $K\subseteq K_n$. If $X$ is locally compact, then hemicompactness is equivalent to $\sigma$-compactness (and Lindel\"ofness).}, the end space can be  intuitively defined in terms of inverse limits (for instance, see \cite{RichardsClassification}).
Instead of directly giving a definition of the end space, we will first introduce the Freudenthal compactification, which can be a bit harder to grasp for general spaces, see, e.g., \cite{DickmanUniform,PorterWoods}. In this paper, we will introduce a definition that works for every Hausdorff locally compact topological space, also formulated in terms of inverse limits; the reader which feels so inclined can check that our definition is equivalent to, e.g., \cite[4X, pp. 336--337]{PorterWoods}.

\begin{Def}
A subset $Y$ of a topological space $X$ will be said to be {\bf bounded} if there exists a compact $K\subseteq X$ with $Y\subseteq K$ (or, equivalently for a Hausdorff space $X$, the closure \( \cl_X(Y) \) of \( Y \) in \( X \) is compact).
Otherwise, $Y$ will be said to be {\bf unbounded}.
\end{Def}

Let $X$ be a Hausdorff locally compact topological space without isolated points. 
We consider finite pairwise-disjoint collections $\mathscr U=\{U_0,\ldots,U_n\}$ of open subsets of $X$ with a distinguished element $U_0$ satisfying:
\begin{itemize}
\item $U_0$ is bounded,
\item $\partial(U_i) = \cl_X(U_i) \ssm U_i $ is compact for all $i\in(n+1)$,
\item $\displaystyle{X=U_0\cup\left(\bigcup_{i=1}^n\cl_X(U_i)\right)}$, and the union is disjoint.
\end{itemize}
We will consistently use the subindex $0$ to refer to the distinguished element of such a collection. Notice that it follows from the third bullet point of the definition that $U_0$ is a regular open set, that is, equal to the interior of its closure in $X$. We denote the set of all such finite collections of open sets, with a distinguished element, as described above, with the symbol $\mathscr K(X)$. Each element $\mathscr U\in\mathscr K(X)$ induces a partition of $X$, denoted:
\begin{equation*}
P_{\mathscr U}=\left\{\{x\}\big|x\in U_0\right\}\cup\left\{\cl_X(U_i)\big|1\leq i\leq n\right\}.
\end{equation*}
We partially order the set $\mathscr K(X)$ by stipulating that, for $\mathscr U,\mathscr V\in\mathscr K(X)$, $\mathscr U\leq\mathscr V$ if and only if the partition $P_{\mathscr V}$ refines the partition $P_{\mathscr U}$. 
Equivalently, if $\mathscr U=\{U_0,\ldots,U_n\},\mathscr V=\{V_0,\ldots,V_m\}$, then $\mathscr U\leq\mathscr V$ if and only if:
\begin{itemize}
\item $U_0\subseteq V_0$,
\item for each $i\in\{1,\ldots,m\}$ there exists a $j\in\{1,\ldots,n\}$ such that $\cl_X(V_i)\subseteq\cl_X(U_j)$.
\end{itemize}
Since a finite union of bounded open sets is a bounded open set, it is not hard to see that $\mathscr K(X)$, equipped with the partial order relation just described, is a directed set. In fact, slightly more is true.

\begin{Lem}
\label{lem:directed}
Let $X$ be a Hausdorff locally compact space without isolated points. For any two $\mathscr U,\mathscr V\in\mathscr K(X)$, there exists $\mathscr W\in\mathscr K(X)$ such that $\mathscr U\leq\mathscr W$, $\mathscr V\leq\mathscr W$, and $\cl_X(U_0)\cup\cl_X(V_0)\subseteq W_0$.
\end{Lem}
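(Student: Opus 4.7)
The plan is to build $\mathscr W$ by first swelling $\cl_X(U_0)\cup\cl_X(V_0)$ (together with the compact boundaries $\partial U_i$ and $\partial V_j$) into a regular open set $W_0$, and then defining the remaining open sets to be the bi-intersections $U_i\cap V_j$ outside $\cl_X(W_0)$. To that end, first form the compact set
\[
K=\cl_X(U_0)\cup\cl_X(V_0)\cup\bigcup_{i=1}^n\partial U_i\cup\bigcup_{j=1}^m\partial V_j,
\]
which is a finite union of compact sets. By local compactness of $X$, there is a bounded open set $W_0^*\supseteq K$ with $\cl_X(W_0^*)$ compact; I will then set $W_0=\int(\cl_X(W_0^*))$, yielding a regular open bounded set with $K\subseteq W_0^*\subseteq W_0$ and compact boundary $\partial W_0$.

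For each pair $(i,j)$ with $1\le i\le n$ and $1\le j\le m$, I define $W_{ij}=U_i\cap V_j\cap(X\ssm\cl_X(W_0))$. These sets are open, disjoint from $W_0$, and pairwise disjoint (since $U_i\cap U_k=\emptyset$ for $i\ne k$, and similarly for the $V$'s). The key technical step is the identification
\[
\cl_X(W_{ij})=F_{ij}:=\cl_X(U_i)\cap\cl_X(V_j)\ssm W_0.
\]
The inclusion $\cl_X(W_{ij})\subseteq F_{ij}$ is immediate from $W_{ij}\subseteq F_{ij}$ and closedness of $F_{ij}$. For the reverse inclusion, given $p\in F_{ij}$, having arranged $\partial U_i\cup\partial V_j\subseteq W_0$ forces $p\in U_i\cap V_j$. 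If also $p\notin\cl_X(W_0)$ then $p\in W_{ij}$ directly; otherwise $p\in\partial W_0$, and here the regularity of $W_0$ guarantees that every open neighborhood of $p$ meets $X\ssm\cl_X(W_0)$, which, combined with $U_i\cap V_j$ being an open neighborhood of $p$, shows $p\in\cl_X(W_{ij})$.

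With the identification $\cl_X(W_{ij})=F_{ij}$ in hand, I will verify that $\mathscr W=\{W_0\}\cup\{W_{ij}:W_{ij}\ne\emptyset\}$ belongs to $\mathscr K(X)$ and dominates both $\mathscr U$ and $\mathscr V$. Compactness of each $\partial W_{ij}$ follows from $\partial W_{ij}\subseteq\partial W_0$, and the partition identity comes from
\[
\bigcup_{i,j}F_{ij}=\left(\bigcup_{i=1}^n\cl_X(U_i)\right)\cap\left(\bigcup_{j=1}^m\cl_X(V_j)\right)\ssm W_0=(X\ssm U_0)\cap(X\ssm V_0)\ssm W_0=X\ssm W_0,
\]
using $U_0\cup V_0\subseteq W_0$. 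The relations $\mathscr U\le\mathscr W$ and $\mathscr V\le\mathscr W$ are then automatic from $U_0,V_0\subseteq W_0$ and the containment $F_{ij}\subseteq\cl_X(U_i)\cap\cl_X(V_j)$. The main obstacle throughout is the equality $F_{ij}=\cl_X(W_{ij})$, and the two design choices that make it hold are enlarging $W_0$ to contain all the boundaries $\partial U_i$ and $\partial V_j$, and forcing $W_0$ to be regular open.
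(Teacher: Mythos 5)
Your proof is correct and follows essentially the same route as the paper: swell $\cl_X(U_0)\cup\cl_X(V_0)$ to a regular open bounded set $W_0$ and take the remaining pieces to be $(U_i\cap V_j)\ssm\cl_X(W_0)$. The one (harmless) difference is that you also absorb the boundaries $\partial U_i$ and $\partial V_j$ into $W_0$, which the paper's terse sketch does not do; this makes your verification that $\cl_X(W_{ij})=\cl_X(U_i)\cap\cl_X(V_j)\ssm W_0$ cleaner, whereas without it one would instead have to invoke the fact that $\partial U_i\cup\partial V_j$ is nowhere dense.
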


\begin{proof}
Cover $\cl_X(U_0)$ and $\cl_X(V_0)$ with finitely many bounded open sets, let the union of those finitely many open sets be $W$, and let $W_0$ be the interior of $\cl_X(W)$, so that $W_0$ is a regular open set, with compact closure, covering $\cl_X(U_0)$ and $\cl_X(V_0)$. Then just define $W_1,\ldots,W_m$ in such a way that the element $\mathscr W=\{W_0,\ldots,W_m\}$ appropriately refines both $\mathscr U$ and $\mathscr V$ (i.e., each $W_j=(U_i\cap V_k)\setminus\cl_X(W_0)$ for some $U_i\in\mathscr U,V_k\in\mathscr V$).
\end{proof}

For each $\mathscr U\in\mathscr K(X)$, we define a topological space $X_{\mathscr U}$ by letting $X_{\mathscr U}$ be the quotient space of $X$ modulo the equivalence relation determined by the partition $P_{\mathscr U}$\footnote{
That is, the underlying space of $X_{\mathscr U}$ is simply $P_{\mathscr U}$, and $ \mathcal V \subseteq P_{\mathscr U}$ is open in $X_{\mathscr U}$ if and only if $\displaystyle{\bigcup_{V\in \mathcal V} V}$ is open in $X$.
}. 
It can be verified easily that the topological space $X_{\mathscr U}$ is compact and Hausdorff. 
Now, given two elements $\mathscr U,\mathscr V\in\mathscr K(X)$, with $\mathscr U\leq\mathscr V$, we define the mapping $\varphi_{\mathscr U,\mathscr V}:X_{\mathscr V}\rightarrow X_{\mathscr U}$ by letting $\varphi_{\mathscr U,\mathscr V}(x)$ be the unique $y\in P_{\mathscr U}$ such that $x\subseteq y$, for every $x\in P_{\mathscr V}$. 
This is well-defined because the partition $P_{\mathscr V}$ refines the partition $P_{\mathscr U}$; it is also easy to verify that this mapping $\varphi_{\mathscr U,\mathscr V}$ is continuous (such mappings---from a finer to a coarser quotient space thus defined---are always continuous).

Thus, we have an inverse system of topological spaces and continuous maps between them, indexed by $\mathscr K(X)$. 
The \emph{Freudenthal compactification} of $X$, denoted $\mathcal F(X)$, is simply defined to be the inverse limit of this directed system. 
Since each of the $X_{\mathscr U}$ is a Hausdorff space, then so will be $\mathcal F(X)$, and since each $X_{\mathscr U}$ is compact, so will be $\mathcal F(X)$ by Tychonoff's theorem. 
Since $X$ is locally compact, for every $x\in X$ and $\mathscr U\in\mathscr K(X)$ there is a terminal segment of elements $\mathscr V\in\mathscr K(X)$, $\mathscr V\geq\mathscr U$, such that $x\in V_0$. Therefore $X$ embeds naturally as a dense open subset of $\mathcal F(X)$, which justifies calling $\mathcal F(X)$ a compactification of $X$. 
Points of the remainder $\mathcal E(X)=\mathcal F(X)\setminus X$ are called {\em ends} of the topological space $X$, and the remainder itself $\mathcal E(X)$ is the {\em space of ends} of $X$. 
An end $e\in\mathcal E(X)$ is formally (since it lives in an inverse limit) an element of the product $\prod_{\mathscr U\in\mathscr K(X)}X_{\mathscr U}$; as a matter of notation, whenever $\mathscr U=\{U_0,\ldots,U_n\}\in\mathscr K(X)$, if $e(\mathscr U)=U_i$ we will write $e\in\hat{U_i}$. 
We record this definition and notations below, for future reference.

\begin{Def}
Given a locally compact Hausdorff topological space without isolated points $X$, we define the {\bf Freudenthal compactification} $\mathcal F(X)$ of $X$ to be the inverse limit of the directed system, indexed by $\mathscr K(X)$, described above. The remainder $\mathcal E(X)=\mathcal F(X)\setminus X$ of this compactification is the {\bf space of ends} of the space $X$; a point in \( \ce(X) \) is an \textbf{end} of \( X \). 
\end{Def}

\begin{Notation}
Let \( X \) be a locally compact Hausdorff topological space without isolated points. Given an open set $U$ with compact boundary, we denote
\begin{equation*}
\hat{U}=\{e\in\mathcal E(X)\big|e(\mathscr U)=U\text{ whenever }U\in\mathscr U\}.
\end{equation*}
For an arbitrary subset $Y\subseteq X$, we will let \( \overline{ Y} \) denote the closure of \( Y \) in \( \cf(X) \); in the case that \( Y \) is open and with compact boundary, we have that $\overline{Y}= \cl_X(Y)\cup\hat{Y}$.
\end{Notation}

When considering a family $\mathscr U=\{U_0,U_1,\ldots,U_n\}\in\mathscr K(X)$, we can always assume that $U_0$ is the only bounded element from $\mathscr U$, as this is the case for a cofinal subset of $\mathscr K(X)$. For if $U_1,\ldots,U_k$ are bounded and $U_{k+1},\ldots,U_n$ are unbounded, we can take (by local compactness) a bounded open set $V\subseteq X$ containing $U_1\cup\cdots\cup U_k$. Without loss of generality, $V$ is a regular open set (if not, replace it with the interior of $\cl_X(V)$). Then, letting $\mathscr V=\{V,U_{k+1}\setminus\cl_X(V),\ldots,U_n\setminus\cl_X(V)\}$, we have that $\mathscr U\leq\mathscr V$ and $V_0$ is the only bounded element of $\mathscr V$. 

Subsets of the form $U\cup\hat{U}$, where $U\subseteq X$ is open with compact boundary, constitute a basis for the topology of $\mathcal F(X)$, whereas subsets of the form $\hat{U}$ where $U\subseteq X$ is an unbounded open set (with compact boundary) form a basis for the topology of $\mathcal E(X)$ (note that, if $U$ is open with compact boundary, then $\hat{U}=\varnothing$ if and only if $U$ is bounded). 
Since $\mathcal E(X)$ is a closed subset of the compact Hausdorff space $\mathcal F(X)$, it is compact and Hausdorff itself. 
Whenever we have $\mathscr U=\{U_0,\ldots,U_n\}\in\mathscr K(X)$, if $U_i$ is unbounded then the set $\hat{U_i}$ constitutes a clopen subset of $\mathcal E(X)$, and therefore $\mathcal E(X)$ has a neighborhood basis of clopen sets, i.e.,~the space of ends is zero-dimensional.

For reference, we record the basic facts about the Freudenthal compactification and the space of ends mentioned above in the following proposition.

\begin{Prop}
\label{prop:ends}
The Freudenthal compactification of a locally compact Hausdorff topological space without isolated points is Hausdorff, without isolated points, and compact; and its space of ends is Hausdorff, compact, and zero-dimensional.
Moreover, if the space is second countable, then so is its Freudenthal compactification and its space of ends. \qed
\end{Prop}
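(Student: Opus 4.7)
The plan is to gather the observations made in the paragraphs preceding the proposition and supply arguments for the two items not yet explicitly verified: the absence of isolated points, and preservation of second countability. The preceding discussion has already established that each $X_{\mathscr U}$ is compact Hausdorff; that $\mathcal F(X)$ inherits compactness via Tychonoff and Hausdorffness as an inverse-limit subspace of a Hausdorff product; that $\mathcal E(X)$ is closed in $\mathcal F(X)$ (since $X$ is open therein) and hence compact Hausdorff; and that the collection $\{\hat U : U \text{ unbounded open in } X \text{ with compact boundary}\}$ is a clopen basis for $\mathcal E(X)$, witnessing zero-dimensionality. These all may be cited directly.

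For no isolated points: a point $x \in X$ has the same local base in $\mathcal F(X)$ as in $X$ (since $X$ is open in $\mathcal F(X)$), and $X$ has no isolated points by hypothesis; and an end $e$ cannot be isolated because $X$ is dense in $\mathcal F(X)$, so every nonempty open set meets $X$ and hence cannot equal $\{e\}$.

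For second countability, assume $X$ is second countable. Then $X$ is $\sigma$-compact, admitting a compact exhaustion $K_1 \subseteq \int(K_2) \subseteq K_2 \subseteq \cdots$ with $\bigcup_n K_n = X$, and it possesses a countable basis $\mathscr B$ of precompact open sets. The strategy is to exhibit a countable cofinal subfamily $\mathscr K_0 \subseteq \mathscr K(X)$. Granted this, each $X_{\mathscr U}$ is itself second countable --- it is $X$ with finitely many compact sets collapsed to points, and each collapsed point admits a countable neighborhood basis built from finite unions of elements of $\mathscr B$ that cover and isolate the corresponding compact --- so $\mathcal F(X) \cong \varprojlim_{\mathscr U \in \mathscr K_0} X_{\mathscr U}$ embeds as a closed subspace of the countable product $\prod_{\mathscr U \in \mathscr K_0} X_{\mathscr U}$ of second countable spaces, giving second countability. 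The end space $\mathcal E(X) \subseteq \mathcal F(X)$ inherits the property.

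The main obstacle is constructing the countable cofinal family $\mathscr K_0$. The naive attempt $\mathscr U^{(n)} = \{\int(K_n), X \ssm K_n\}$ fails for $\mathscr V$'s having more than one unbounded piece, so one must also track finite clopen decompositions of the complements. The plan is to let $\mathscr K_0$ consist of those $\mathscr U$ whose $U_0$ is the interior of the closure of a finite union of members of $\mathscr B$, paired with suitable refinements of the induced clopen decomposition of $X \ssm \cl_X(U_0)$. For any $\mathscr V \in \mathscr K(X)$, a suitable $U_0 \supseteq \cl_X(V_0)$ is chosen from this countable family, and the remaining pieces $U_i$ are obtained by intersecting the $V_j$ with $X \ssm \cl_X(U_0)$ and regrouping via Lemma~\ref{lem:directed}. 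Verifying that only countably many such refinements are needed --- which uses the compact-boundary constraint on the $U_i$ to control the possible decompositions of the compact set $\partial U_0$ --- is the delicate step that must be argued via the countable basis of $X$.
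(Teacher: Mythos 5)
Your handling of compactness, Hausdorffness, and zero-dimensionality is exactly what the paper intends (these are the facts recorded from the discussion preceding the proposition), and your argument that there are no isolated points---points of \(X\) keep their local bases because \(X\) is open in \(\cf(X)\), and no end can be isolated because \(X\) is dense---is correct and fills in a detail the paper leaves implicit.

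The genuine gap is in the step you yourself flag as delicate, and it cannot be repaired at the stated level of generality: for a second countable, locally compact, Hausdorff space without isolated points, a countable cofinal subfamily of \(\mathscr K(X)\) need not exist. Take \(X=\br\times\bn\), a disjoint union of countably many lines. For every \(A\subseteq\bn\) the sets \(\br\times A\) and \(\br\times(\bn\ssm A)\) are clopen, hence have empty (so compact) boundary, and together with any admissible \(U_0\) they form an element of \(\mathscr K(X)\). Because these pieces have empty boundary, the ``compact-boundary constraint on \(\partial(U_0)\)'' that you hope will control the decompositions gives no information at all: partitions coming from \(A\neq B,\bn\ssm B\) induce the same trivial decomposition of \(\partial(U_0)\) yet neither refines the other, and a counting argument on the finitely many blocks of each member of a putative countable cofinal family shows only countably many of the \(2^{\aleph_0}\) subsets \(A\) could ever be captured. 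Worse, every free ultrafilter \(p\) on \(\bn\) defines a distinct end (send \(\mathscr U\) to the piece \(\cl_X(U_i)\) containing \(\br\times\{j\}\) for a \(p\)-large set of \(j\)), so \(\ce(X)\) contains a copy of \(\beta\bn\ssm\bn\): the ``Moreover'' clause is simply false for arbitrary disconnected spaces, so no proof strategy can succeed there. What rescues the statement in every case the paper actually uses it is connectedness together with local connectedness: Lemma~\ref{lem:finite} (stated just after the proposition) produces a compact exhaustion \(K_1\subseteq\int(K_2)\subseteq\cdots\) each of whose complements has only finitely many components, and the corresponding elements of \(\mathscr K(X)\) (a bounded regular open set plus the finitely many complementary components, as in the paragraph following that lemma) form a countable cofinal chain; from there your inverse-limit argument---each \(X_{\mathscr U}\) second countable, the limit a closed subspace of a countable product---goes through verbatim. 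You should add these hypotheses to the second-countability clause, or restrict it to manifolds, rather than attempt the construction of \(\mathscr K_0\) you sketch.
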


Recall that a \emph{proper map} is a continuous function in which the pre-image of any compact set is compact. A proper map between manifolds can be uniquely extended to a map between their associated Freudenthal compactifications.
We record this here:

\begin{Lem}
Let \( X \) and \( Y \) be locally compact, Hausdorff topological spaces without isolated points.
If \( f \co X \to Y \) is a proper map, then there exists a unique continuous extension \( \bar f \co \cf (X) \to \cf (Y) \) of $f$ that maps ends to ends, in other words, such that the restriction  \(  \hat f=\bar f |_{\ce(X)} \) satisfies \( \hat f \co \ce(X) \to \ce(Y) \).
\end{Lem}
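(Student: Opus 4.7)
The plan is to invoke the universal property of $\cf(Y)$ as an inverse limit over $\mathscr K(Y)$. For each $\mathscr V = \{V_0, V_1, \ldots, V_m\} \in \mathscr K(Y)$, I will produce a continuous map $\bar h_{\mathscr V} \co \cf(X) \to Y_{\mathscr V}$, verify that the family $\{\bar h_{\mathscr V}\}$ is compatible with the bonding maps $\varphi_{\mathscr V, \mathscr V'}$, and let $\bar f$ be the induced map to $\cf(Y)$. On the dense subspace $X \subseteq \cf(X)$ the definition is forced: $\bar h_{\mathscr V}(x) = \pi_{\mathscr V}(f(x))$, where $\pi_{\mathscr V} \co Y \to Y_{\mathscr V}$ is the quotient map. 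This immediately makes $\bar f$ extend $f$, and uniqueness then follows from the density of $X$ in $\cf(X)$ together with the Hausdorffness of $\cf(Y)$.

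The central step is the extension to ends, which exploits properness to pull $\mathscr V$ back to an element of $\mathscr K(X)$. Since $\cl_Y(V_0)$ and each $\partial V_i$ are compact, $f^{-1}(\cl_Y(V_0))$ and each $f^{-1}(\partial V_i)$ are compact; in particular $f^{-1}(V_0)$ is bounded in $X$. I would choose a bounded regular open set $W_0 \subseteq X$ containing $\cl_X(f^{-1}(V_0)) \cup \bigcup_{i=1}^m f^{-1}(\partial V_i)$, and set $W_i = f^{-1}(V_i) \ssm \cl_X(W_0)$ for each $i \geq 1$. After discarding any empty pieces, one verifies that $\mathscr W = \{W_0, W_1, \ldots, W_m\} \in \mathscr K(X)$: the $W_i$ are open and pairwise disjoint, each $\partial W_i$ sits in $\partial W_0$ (hence is compact) because $f^{-1}(\partial V_i) \subseteq W_0$, and the closures $\cl_X(W_i)$ together with $W_0$ cover $X$ (the one subtle case is $x \in \partial W_0$ with $f(x) \in \partial V_i$, which is handled by the openness of $f^{-1}(V_i)$). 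Crucially, $f(W_i) \subseteq V_i$, so for any end $e \in \hat{W_i}$ (necessarily with $i \geq 1$) I define $\bar h_{\mathscr V}(e) = \cl_Y(V_i) \in P_{\mathscr V}$.

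Well-definedness (independence of the choice of $\mathscr W$) and continuity follow from the same observation: if $e \in \hat{W_i}$, then $W_i \cup \hat{W_i}$ is an open neighborhood of $e$ in $\cf(X)$ on which $\bar h_{\mathscr V}$ is the constant $\cl_Y(V_i)$. For compatibility with $\mathscr V \leq \mathscr V'$, take a common refinement $\mathscr W''$ (via Lemma~\ref{lem:directed}) of the pulled-back $\mathscr W$ and $\mathscr W'$: then $e \in \hat{W''_l}$ for some $l$ with $W''_l \cap W_i$ and $W''_l \cap W'_j$ both nonempty, because $W''_l$ is unbounded while $\partial W_i$ and $\partial W'_j$ are compact and $\cl_X(W''_l)$ lies in $\cl_X(W_i) \cap \cl_X(W'_j)$. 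A point $x$ in either intersection gives $f(x) \in V_i \cap V'_j$, forcing the unique $k$ with $\cl_Y(V'_j) \subseteq \cl_Y(V_k)$ to equal $i$ (otherwise $\partial V_k \cap V_i$ would be nonempty, contradicting the openness of $V_i$ together with $V_i \cap V_k = \varnothing$). Hence $\varphi_{\mathscr V, \mathscr V'}(\bar h_{\mathscr V'}(e)) = \bar h_{\mathscr V}(e)$.

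Finally, $\bar f$ sends ends to ends: for $e \in \ce(X)$ and any $y \in Y$, choose $\mathscr V$ with $y \in V_0$; since $V_0$ is disjoint from $\cl_Y(V_i)$ for $i \geq 1$, the equivalence class $\{y\}$ in $Y_{\mathscr V}$ differs from $\bar h_{\mathscr V}(e) = \cl_Y(V_i)$, so $\bar f(e) \neq y$. The main obstacle I anticipate is the delicate verification that $\mathscr W \in \mathscr K(X)$—in particular, that the closures of $W_1, \ldots, W_m$ together with $W_0$ cover $X$—and the parallel common-refinement argument needed for both well-definedness and compatibility. Once these are in place, the remaining verifications are routine.
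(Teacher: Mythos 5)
Your proof is correct and follows the same route as the paper: pull each \( \mathscr V \in \mathscr K(Y) \) back to an element of \( \mathscr K(X) \) using properness, and assemble \( \bar f \) via the universal property of the inverse limit. In fact your version is more careful than the paper's one-line hint, which asserts that \( \{f^{-1}[V_0], \ldots, f^{-1}[V_n]\} \) itself lies in \( \mathscr K(X) \); taken literally that can fail, since a point \( x \) with \( f(x) \in \partial(V_i) \) need not lie in \( \cl_X(f^{-1}[V_i]) \), so the covering condition \( X = U_0 \cup \bigcup_{i}\cl_X(U_i) \) may be violated. Your repair---enlarging \( W_0 \) to a bounded regular open set containing \( \cl_X(f^{-1}[V_0]) \cup \bigcup_i f^{-1}[\partial(V_i)] \) and trimming the remaining pieces---is exactly what is needed. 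One small slip: the genuinely subtle case in the covering verification is not \( x \in \partial(W_0) \) with \( f(x) \in \partial(V_i) \) (that case is vacuous, since \( f^{-1}[\partial(V_i)] \subseteq W_0 \) forces such an \( x \) into the open set \( W_0 \), hence off \( \partial(W_0) \)), but rather \( x \in \partial(W_0) \) with \( f(x) \in V_i \); there one uses the regularity of \( W_0 \) (no neighborhood of \( x \) is contained in \( \cl_X(W_0) \)) together with the openness of \( f^{-1}(V_i) \) to conclude \( x \in \cl_X(W_i) \). With that adjustment, the remaining verifications---well-definedness, compatibility with the bonding maps, and the fact that ends map to ends---go through as you describe.
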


\begin{proof}
This is a standard exercise in the definitions. Simply use the fact that, if $\mathscr V=\{V_0,V_1,\ldots,V_n\}\in\mathscr K(Y)$, then $\mathscr U=\{U_0,U_1,\ldots,U_n\}\in\mathscr K(X)$, where $U_i=f^{-1}[V_i]$.
\end{proof}

As an immediate corollary, we see that a homeomorphism between locally compact Hausdorff  spaces without isolated points induces homeomorphisms between their spaces of ends and their Freudenthal compactifications. 

The following lemma will help simplify our intuition about $\mathscr K(X)$ in the case where $X$ is connected and locally connected (in particular, in the case where $X$ is a manifold, which is the case that we will be concerned with in this paper). Note that Hausdorff connected spaces with more than one point do not have isolated points.

\begin{Lem}
\label{lem:finite}
Let $X$ be a Hausdorff, connected, locally compact, locally connected topological space. Then, every compact subset of $X$ is contained in an open set $U$ such that $\cl_X(U)$ is compact and has finitely many complementary components.
\end{Lem}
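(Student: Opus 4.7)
The plan is to build $U$ in two successive stages, each application leveraging local compactness together with local connectedness to harvest a finite cover by connected precompact open sets.

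First I would cover $K$ itself: for each $x\in K$ use local compactness and local connectedness to choose a connected open neighborhood $V_x\ni x$ with $\cl_X(V_x)$ compact, then extract a finite subcover $V_{x_1},\ldots,V_{x_n}$ by compactness of $K$, and set $V=\bigcup_{i=1}^n V_{x_i}$. Then $V$ is an open neighborhood of $K$ whose closure $\cl_X(V)=\bigcup_i\cl_X(V_{x_i})$ is compact as a finite union of compact sets. Next I would apply exactly the same recipe to the compact set $\cl_X(V)$ in place of $K$: for each $y\in\cl_X(V)$ pick a connected open neighborhood $N_y$ with $\cl_X(N_y)$ compact, extract a finite subcover $N_1,\ldots,N_m$, and let $U=\bigcup_{j=1}^m N_j$. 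By construction $K\subseteq V\subseteq\cl_X(V)\subseteq U$, $U$ is open, and $\cl_X(U)=\bigcup_j\cl_X(N_j)$ is compact.

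It remains to show $X\ssm\cl_X(U)$ has only finitely many components. A first useful observation is that $\partial U\subseteq\bigcup_j\partial N_j$: if $p\in\cl_X(U)\ssm U$, then $p\in\cl_X(N_j)$ for some $j$ and $p\notin N_j$ (otherwise $p\in U$), so $p\in\partial N_j$. By local connectedness of $X$, the components of the open set $X\ssm\cl_X(U)$ are themselves open, and the boundary of each such component $C$ satisfies $\partial C\subseteq\partial U\subseteq\bigcup_j\partial N_j$, a finite union of compact sets.

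The main obstacle is converting compactness of $\bigcup_j\partial N_j$ together with the connectedness of the $N_j$'s into a finiteness bound on the number of components. The strategy I would pursue is contradiction plus pigeonhole: if there were infinitely many components $C_1,C_2,\ldots$, choose boundary witnesses $p_k\in\partial C_k$; by pigeonhole infinitely many $p_k$ would lie in a single $\partial N_j$, and by compactness a subsequence would converge to some $p\in\partial N_j$. A small connected precompact neighborhood $M$ of $p$ meets $N_j$ (since $p\in\partial N_j$) as well as infinitely many $C_k$'s, so $M\cup N_j$ is a connected open set bordering infinitely many distinct components. The subtle part is that $M$ need not lie in $X\ssm\cl_X(U)$, so to turn this configuration into an actual contradiction one has to arrange the second-stage cover more carefully — for instance, requiring each $N_j$ to be chosen small enough that $N_j\cap(X\ssm\cl_X(V))$ has controlled local structure, so that the accumulation of $C_k$'s near $p$ forces two of them to coincide. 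This refinement is where I expect the real work to lie.
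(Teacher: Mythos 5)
There is a genuine gap, and you have correctly located it yourself: the last step of your argument is missing, and it is not a refinement that can be supplied along the lines you sketch. The deeper problem is that the set \( U = \bigcup_{j=1}^m N_j \) you construct need not satisfy the conclusion at all, no matter how the pigeonhole argument is arranged. Already in \( X = \br^2 \) with \( K = \{0\} \), one can take a single connected precompact open set \( N_1 \) containing \( \cl_X(V) \) obtained by removing from a large open disk a pairwise-disjoint sequence of small closed disks accumulating at a boundary point: \( N_1 \) is open, connected, and precompact, yet \( X \ssm \cl_X(N_1) \) has infinitely many components (the unbounded one together with the infinitely many small open disks that reappear when passing to the closure). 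So connectedness and precompactness of the pieces of the second-stage cover are simply not enough, and the contradiction you are aiming for cannot materialize: the configuration of infinitely many components accumulating on \( \partial N_j \) is consistent.

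The missing idea --- and the route the paper takes --- is to \emph{fill in the bounded holes} rather than to rule them out. After covering \( K \) by a bounded open \( V \) and \( \cl_X(V) \) by a bounded open \( W \), consider the components \( \{U_i\}_{i \in I} \) of \( X \ssm \cl_X(V) \) (open, by local connectedness). Since \( X \) is connected, every \( U_i \) must meet \( W \); since \( \{W\} \cup \{U_i\}_{i\in I} \) covers the compact set \( \cl_X(W) \), only finitely many \( U_{i_1}, \ldots, U_{i_n} \) can meet \( \partial(W) \), and every other component, being connected and disjoint from \( \partial(W) \), is trapped inside \( W \) and hence bounded. Setting \( U = V \cup \bigl(\bigcup_{j \in I \ssm \{i_1,\ldots,i_n\}} U_j\bigr) \) absorbs all the bounded components; then \( \cl_X(U) \) is compact (it sits inside \( \cl_X(W) \)) and its complement is exactly \( U_{i_1} \cup \cdots \cup U_{i_n} \). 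Your first stage (producing \( V \) and the compact \( \cl_X(V) \)) is fine and matches the paper; it is the treatment of the complement that needs to be replaced.
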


\begin{proof}
Let \( K\subseteq X \) be a compact set. Cover \( K \) with finitely many open bounded subsets and let \( V \) denote their union.
Then, cover the compact set \( \cl_X(V) \) with finitely many open bounded sets and let \( W \) denote their union.

Let $\{U_i\big|i\in I\}$ be the collection of connected components of $X\setminus\cl_X(V)$. As $X$ is locally connected, each $U_i$ must be an open set. Hence for each $i\in I$, the set $W_i=W\cup\left(\bigcup_{j\in I\setminus\{i\}}U_j\right)$ is open as well, and so $U_i$ must intersect $W_i$, lest these two sets disconnect the connected space $X$. Since $U_i$ is disjoint from the remaining $U_j$, we conclude that $U_i\cap W\neq\varnothing$, for all $i\in I$.

Now, the family $\{W\}\cup\{U_i\big|i\in I\}$ forms an open cover of the compact set $\cl_X(W)$, and so there are finitely many $i_1,\ldots,i_n\in I$ such that
\begin{equation*}
\cl_X(W)\subseteq W\cup U_{i_1}\cup\cdots\cup U_{i_n}.
\end{equation*}
It follows that $\partial(W)\subseteq U_{i_1}\cup\cdots\cup U_{i_n}$. As the family $\{U_i\big|i\in I\}$ is pairwise disjoint, we must have for each $j\in J=I\setminus\{i_1,\ldots,i_n\}$ that $U_j\cap\partial(W)=\varnothing$ and so it follows that $U_j\subseteq W$ (otherwise $U_j$ would be disconnected by $U_j\cap W$ and $U_j\setminus\cl_X(W)$). 

Now we set $U=V\cup\left(\bigcup_{j\in J}U_j\right)$, which is a bounded open set since it is contained in the compact set $\cl_X(W)$, as argued in the previous paragraph. The set $F=\cl_X(V)\cup\left(\bigcup_{j\in J}U_j\right)$ is closed, as its complement is the open set $U_{i_1}\cup\cdots\cup U_{i_n}$; from here it is easy to see that in fact $F=\cl_X(U)$ and its complementary components are the finitely many sets $U_{i_1},\ldots,U_{i_n}$. Therefore $U$ is as sought.
\end{proof}

The previous lemma essentially says that, in the case where $X$ is connected and locally connected, there are cofinally many elements $\mathscr U=\{U_0,\ldots,U_n\}\in\mathscr K(X)$ such that the $U_i$ for $i\geq 1$ are precisely the connected components of $X\setminus\cl_X(U_0)$. Hence, when dealing with connected locally connected spaces (in particular, when working with manifolds), we may always assume that elements of $\mathscr K(X)$ consist of a compact set $K$ with nonempty interior plus the finitely many components of $X\setminus K$. This observation should make transparent that our definition of the Freudenthal compactification is equivalent to the one that is more commonly used in the context of metrizable manifolds.

\begin{Rem}\label{rem:open-subspace}
We will frequently be interested in considering subspaces of locally compact Hausdorff spaces and analyzing how their Freudenthal compactifications relate to one another. For this, it will be useful to recall that, in a locally compact Hausdorff space $X$, a subspace $Y\subseteq X$ is itself locally compact if and only if it can be written as the intersection of an open subset and a closed subset of $X$~\cite[Theorem 18.4]{Willard}. In particular, if $X$ is a locally compact Hausdorff space and $Y\subseteq X$ is an open subspace, then $Y$ is locally compact as well, and this fact will be used extensively in the remainder of the paper.
\end{Rem}

\begin{Rem}\label{rem:finite}
With essentially the same proof as in Lemma~\ref{lem:finite}, we can prove the following: if $X$ is a Hausdorff, connected, locally compact and locally connected topological space, and $Y\subseteq X$ is an open subspace, then every compact subset $K\subseteq Y$ is contained in an open set $U$ such that $\cl_X(U)\subseteq Y$ and $\cl_X(U)=\cl_Y(U)$ has finitely many complementary components (in $X$). The only modification, with respect to the proof of Lemma~\ref{lem:finite}, that needs to be done is to make sure that every time we cover a compact set with finitely many open bounded sets, we make sure that each of these sets has its closure contained in $Y$ (which is possible by the local compactness of $Y$, see Remark~\ref{rem:open-subspace}).
\end{Rem}

\subsection{The classification of metrizable surfaces.}
As already noted, in two dimensions, the space of ends can be used to give a complete classification of metrizable 2-manifolds up to homeomorphism, which we now describe.

A topological space is \emph{planar} if it can be homeomorphically embedded in the plane \( \br^2 \). 
On a 2-manifold \( S \), an end \( e \) of \( S \) is \emph{planar} (resp. \emph{orientable}) if there exists an open set \( U \subset S \) with compact boundary such that \( e \in \hat U \) and \( U \) is planar (resp. orientable). 
Given a surface \( S \), let \( \ce'(S) \) denote the space of ends that are non-planar and let \( \ce''(S) \) denote the space of ends that are non-orientable.
It follows that \( \ce''(S) \subseteq \ce'(S) \subseteq \ce(S) \). 

If \( S \) is a metrizable 2-manifold, then either
\begin{itemize}
\item \( S \) is orientable, 
\item the complement of every compact subset of \( S \) is a non-orientable 2-manifold, in which case we say \( S \) is \emph{infinitely non-orientable}, or
\item there exists a bounded open subset \( A \) of \( S \) such that \( A \) is non-orientable of finite even (resp. odd) genus and \( S \ssm \cl_S(A) \) is orientable, in which case we say \( S \) has \emph{even (resp. odd) non-orientability type}.
\end{itemize}
We can therefore partition the class of metrizable surfaces into four orientability classes: orientable, infinite non-orientable, even non-orientable, and odd non-orientable. 

\begin{Thm}[The classification of metrizable 2-manifolds, {\cite[Theorems 1 \& 2]{RichardsClassification}}]
\label{thm:classification}
Two  metrizable 2-manifolds \( S \) and \( S' \)  of the same (possibly infinite) genus and orientability class are homeomorphic if and only if \( (\ce(S), \ce'(S), \ce''(S) ) \) and \( ( \ce(S'), \ce'(S'), \ce''(S')) \) are homeomorphic (as triples of spaces).
Moreover, for every triple \( (X,Y, Z) \) of Hausdorff, compact, second countable \footnote{See Remark~\ref{rem:correction} for a discussion of an error in Richards's original statement.}, totally disconnected spaces with \( Z\subseteq Y \subseteq X \) there is a metrizable 2-manifold \( S \) such that \( ( \ce(S), \ce'(S), \ce''(S) ) \) is homeomorphic to \( (X,Y, Z) \) (as triples of spaces). \qed
\end{Thm}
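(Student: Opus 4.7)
The proof naturally splits into the classification part (the ``if and only if'') and the realization part.

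For the classification, the forward implication is straightforward: any homeomorphism \( h\co S\to S' \) is proper (since both are Hausdorff, locally compact, and locally Euclidean of the same dimension), so by the extension lemma recorded above it extends to a homeomorphism \( \bar h\co \cf(S)\to\cf(S') \) carrying \( \ce(S) \) onto \( \ce(S') \). Because \( h \) sends planar (resp. orientable) open sets with compact boundary to open sets of the same kind, \( \bar h \) must carry \( \ce'(S) \) onto \( \ce'(S') \) and \( \ce''(S) \) onto \( \ce''(S') \); this shows the triples are homeomorphic.

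The substantive direction is the converse. The plan is to build compatible exhaustions. Since \( S \) is metrizable, it is Lindel\"of and one can choose an increasing exhaustion \( K_1\subseteq K_2\subseteq\cdots \) of compact subsurfaces of \( S \) such that each \( K_n \) has finitely many boundary circles, each component of \( S\ssm K_n \) is unbounded, and \( \bigcup_n \mathrm{int}(K_n)=S \); do the same for \( S' \) to obtain \( K_n' \). Each collection of components of \( S\ssm K_n \) determines a finite clopen partition \( \mathcal P_n \) of \( \ce(S) \); by Lemma~\ref{lem:finite}, these partitions are cofinal among clopen partitions of \( \ce(S) \). Using the given homeomorphism of triples, together with zero-dimensionality of \( \ce(S) \) and \( \ce(S') \), one can pass to subsequences and refine so that the partitions \( \mathcal P_n \) and \( \mathcal P_n' \) correspond under the homeomorphism, with non-planar (resp. non-orientable) pieces matched to non-planar (resp. non-orientable) pieces. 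Now \( K_n \) and \( K_n' \) are compact surfaces with boundary whose boundary components are in bijection with matching clopen ends-subsets; by inserting extra handles or crosscaps into \( K_n \) and \( K_n' \) (possible because the total genus and orientability class agree, and since the remaining surface is sufficiently rich), one arranges that \( K_n\cong K_n' \) as bordered surfaces for every \( n \). The classical classification of compact bordered surfaces lets us choose homeomorphisms \( h_n\co K_n\to K_n' \); using uniqueness of collars and the bicollar neighborhood theorem one modifies \( h_{n+1} \) so that it agrees with \( h_n \) on a collar of \( \partial K_n \). The direct limit of the \( h_n \) is then a homeomorphism \( S\to S' \). The hardest step in this outline is the inductive matching of \( K_n \) with \( K_n' \): one has to keep track of how the genus/non-orientability ``concentrates'' near each end clopen subset, and use the convergence to the end-space triple to absorb any discrepancies at stage \( n \) into later stages.

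For the realization part, the strategy is to build a metrizable surface whose end structure realizes a prescribed triple \( (X,Y,Z) \) with \( Z\subseteq Y\subseteq X \). First embed \( X \) as a closed subset of a Cantor set \( C\subseteq \mathbb S^2 \). The surface \( S_0=\mathbb S^2\ssm X \) is a connected open planar 2-manifold whose end space is canonically homeomorphic to \( X \) with all ends planar (hence orientable). To realize \( Y\) as the non-planar locus, choose a null sequence of pairwise disjoint closed disks in \( S_0 \) accumulating exactly on \( Y \); attach a handle across each disk-pair to create an end of infinite genus precisely at each point of \( Y \). Similarly, to realize \( Z \) as the non-orientable locus, replace a null sequence of disks accumulating on \( Z \) by M\"obius bands (cross-caps). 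The resulting surface \( S \) is second countable, and by construction each point of \( X\ssm Y \) still has a planar neighborhood; each point of \( Y\ssm Z \) has an orientable non-planar neighborhood; and each point of \( Z \) has neighborhoods containing infinitely many cross-caps. Verifying that the triple \( (\ce(S),\ce'(S),\ce''(S)) \) is homeomorphic to \( (X,Y,Z) \) reduces to checking that the attaching operations do not change the end space (because they are done along null sequences of disks) and that they exactly shift each end into the expected category. The main subtlety here is the null-sequence requirement: because \( Y \) and \( Z \) are closed in \( X \), one must choose the disks carefully so that any compact set in \( S \) meets only finitely many of them, which is where second countability of \( X \) is essential.
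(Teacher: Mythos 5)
The paper does not prove this theorem; it is stated with a \(\qed\) and attributed to Richards (Theorems 1 and 2 of \cite{RichardsClassification}), with only Remark~\ref{rem:correction} added to repair the hypothesis ``separable'' to ``second countable.'' So there is no in-paper argument to compare against; what you have written is an outline of Richards's own strategy. Your forward implication and your realization construction are essentially correct and match the classical proof: embedding \( X \) as a closed subset of a Cantor set in \( \bS^2 \) (this is exactly where second countability is needed, not merely for the null-sequence bookkeeping as you suggest at the end), taking the complement to get a planar surface with end space \( X \), and then attaching handles and cross-caps along null sequences of disks accumulating precisely on \( Y \) and \( Z \).

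The genuine gap is in the converse of the classification statement, at the step ``by inserting extra handles or crosscaps into \( K_n \) and \( K_n' \) \ldots one arranges that \( K_n\cong K_n' \).'' This single sentence is where essentially all of Richards's work lives, and as written it does not go through without substantial additional argument. Two concrete problems: first, genus cannot be moved freely between \( K_n \) and its complementary components --- a handle can only be absorbed into (or borrowed from) a complementary component that itself carries genus or contains a non-planar end, so the matching must be organized around the sets \( \ce'(S) \) and \( \ce''(S) \), and when the total genus is finite the bookkeeping has to terminate exactly (this is where the hypothesis of equal genus and orientability class is actually consumed). Second, ``pass to subsequences and refine so that the partitions correspond'' hides an interleaving argument: refining \( \mathcal P_n \) forces you to enlarge or re-choose \( K_n \), and the correspondence of boundary circles of \( K_n \) with those of \( K_n' \) must be chosen compatibly with all later stages at once, not just stage by stage, or the direct limit of the \( h_n \) need not be well defined. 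Both issues are repairable --- they are exactly what Richards's reduction to a canonical form accomplishes --- but as stated your argument is a proof outline with its central step asserted rather than proved.
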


\begin{Rem}
\label{rem:correction}
Theorem~\ref{thm:classification} above is stated in terms of metrizable manifolds; however, Richards's statement \cite[Theorem 1]{RichardsClassification} only includes the assumption that the manifold is separable.
Richards is assuming that a separable manifold is countably triangulable and, as such, his statement is false: the Pr\"ufer manifold (see Appendix~\ref{appendix-prufer}) is an example of a non-metrizable, separable 2-manifold.
However, Rad\'o \cite{RadoUber} proved that every second-countable surface is countably triangulable (see \cite[\S8]{AhlforsRiemann} for a proof).
In addition, Richards's original statement from \cite[Theorem 2]{RichardsClassification} again only claims separability instead of second countability; however, this is an error.  
His argument relies on the false claim that a Hausdorff,  separable, compact, totally disconnected space is homeomorphic to a closed subset of the Cantor set; a counterexample to this is the \v{C}ech--Stone compactification of the natural numbers, which is Hausdorff, compact, separable, and totally disconnected, but fails to be second countable (or even first countable) and hence cannot be realized as a closed subset of the Cantor set. However, with the stronger assumption of second countability replacing separability, the claim is true. As a public service announcement to future readers of Richards's paper \cite{RichardsClassification}, the reader should replace every mention of separability with second countability.
\end{Rem}

\begin{Rem}
The first part of the classification of surfaces \cite[Theorem 1]{RichardsClassification} is originally due to Ker{\'e}kj{\'a}rt{\'o} \cite{Kerekjarto}; however, Richards claims there are gaps in the proof. 
We should also note that, along with Freudenthal, Ker\'ekj\'art\'o independently introduced the notion of an end of a manifold.
\end{Rem}

\subsection{Long and short ends.}

The goal of this paper is to understand surfaces that can be decomposed into a union of a metrizable surface with countably many long planes.
As our understanding of metrizable surfaces---through the classification theorem---relies on their space of ends, we will need to understand the end space of a non-metrizable manifold.
The goal of this section is to introduce the notions of long and short ends; a long end is meant to capture an end that ``requires \( \omega_1 \) time" to escape while a short end is meant to capture the notion of being an end of a metrizable piece of the manifold.

\begin{Def} Let \( X \) be a connected, locally compact, Hausdorff topological space. 
\begin{enumerate}

\item
An end \( e \) of \( X \) is \emph{long} if it is a weak \( P \)-point of \( \cf(X) \), that is, \( e \) is not an accumulation point of any countable subset of \( \cf (X) \).

\item
An end of \( X \) is \emph{short} if it is a \( G_\delta \) point of \( \cf(X) \).

\item
\( X \) has the \emph{end dichotomy property}, or \emph{EDP}, if every end of \( X \) is either long or short. 

\end{enumerate}
\end{Def}

The following well-known lemma (see~\cite[Exercise 3.1.F (a), p. 135]{Engelking}), establishing the equivalence between two possible definitions of a short end, will be used extensively throughout the paper.

\begin{Lem}
If $X$ is a compact Hausdorff space and $x\in X$, then $x$ is a \( G_\delta \) point if and only if it has a countable neighborhood basis. \qed
\end{Lem}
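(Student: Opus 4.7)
The plan is to prove both directions separately, with the nontrivial content being the forward direction ($G_\delta \Rightarrow$ countable neighborhood basis), where compactness and Hausdorffness must both be used.

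For the easy direction, suppose $x$ has a countable neighborhood basis; without loss of generality, take it to consist of open sets $\{V_n \mid n\in\mathbb N\}$. For any $y\in X$ with $y\neq x$, the Hausdorff property supplies an open neighborhood $U$ of $x$ missing $y$, and the basis property then yields some $V_n \subseteq U$, so $y \notin V_n$. Therefore $\{x\} = \bigcap_{n\in\mathbb N} V_n$, exhibiting $\{x\}$ as a $G_\delta$ set.

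For the forward direction, write $\{x\} = \bigcap_{n\in\mathbb N} U_n$ with each $U_n$ open, and replace $U_n$ by $U_1\cap\cdots\cap U_n$ to assume the sequence is decreasing. The key step is to use regularity (which is automatic for compact Hausdorff spaces) to shrink each $U_n$ to an open set $V_n$ with $x\in V_n \subseteq \cl_X(V_n) \subseteq U_n$; this is done by separating the point $x$ from the closed set $X\ssm U_n$. Then $\bigcap_{n\in\mathbb N} \cl_X(V_n) = \{x\}$ as well. I claim $\{V_n\mid n\in\mathbb N\}$ is a neighborhood basis at $x$: given any open $W \ni x$, the decreasing sequence of compact sets $K_n = \cl_X(V_n) \ssm W$ has empty intersection, so by the finite intersection property some $K_n$ is empty, giving $V_n \subseteq \cl_X(V_n) \subseteq W$.

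The main (mild) obstacle is recognizing that the raw $G_\delta$ witnesses $U_n$ need not themselves form a neighborhood basis---one needs the shrinking step via regularity to control how the $V_n$ sit inside an arbitrary open neighborhood of $x$, and the passage from $\bigcap V_n = \{x\}$ to ``basis'' is precisely where compactness enters (via the finite intersection property applied to the closed sets $K_n$). Once both ingredients are in place, the argument is a few lines.
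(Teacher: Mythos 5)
Your argument is correct and is the standard one; note that the paper itself gives no proof of this lemma, citing it to Engelking (Exercise 3.1.F(a)) and closing with \(\square\), so there is no in-paper argument to compare against. Both directions are handled properly: the easy direction uses only Hausdorffness and the countable basis, and the forward direction correctly isolates regularity (to shrink the \(U_n\)) and compactness (via the finite intersection property) as the two essential inputs.

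One small wrinkle: you arrange the \(U_n\) to be decreasing, but the sets \(V_n\) produced by regularity need not inherit this, so the \(K_n = \cl_X(V_n)\ssm W\) are not automatically a decreasing sequence and you cannot immediately conclude that a \emph{single} \(K_n\) is empty. This is harmless --- either replace \(V_n\) by \(V_1\cap\cdots\cap V_n\) before forming the \(K_n\), or observe that the finite intersection property already gives \(\cl_X(V_{n_1})\cap\cdots\cap\cl_X(V_{n_k})\subseteq W\) for some finite subfamily, whence the finite intersections of the \(V_n\) form the desired countable neighborhood basis. With that one-line repair the proof is complete.
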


Hence, an equivalent condition for an end to be short is for the end to have a countable neighborhood basis in the Freudenthal compactification.

\begin{figure}[t]
\centering
\includegraphics[scale=0.75]{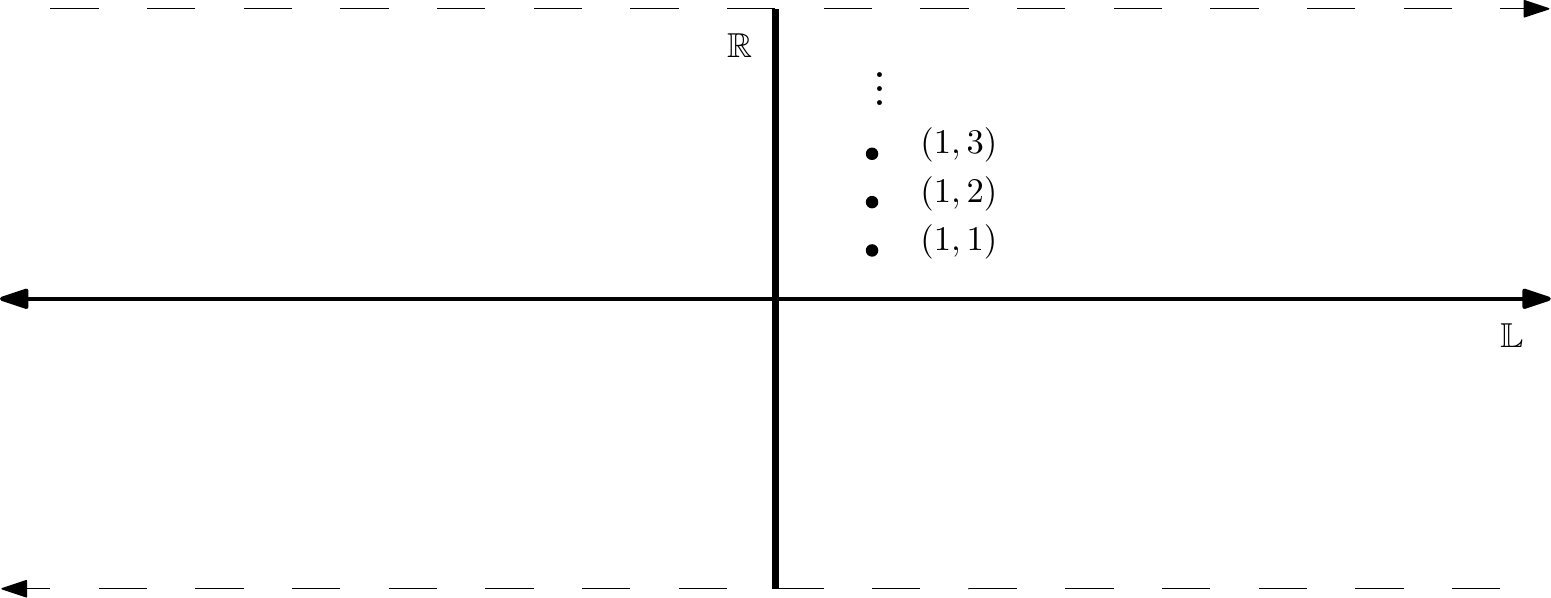}
\caption{The manifold $\mathbb L\times\mathbb R$ is pictured above with the horizontal axis representing \( \mathbb L \) and vertical axis \( \mathbb R \).  Its unique end is neither short nor long (the latter is illustrated by means of a countable sequence converging vertically to such end).}
\label{fig:longtimesr}
\end{figure}

\begin{Ex}
\label{ex:long-line}
The closed long ray is defined to be the linearly ordered topological space $\omega_1\times[0,1)$, ordered lexicographically, and it will be denoted with $\mathbb L^{\geq0}$. The long ray \( \bl^+ \) is the result of removing the minimum element from the linearly ordered topological space $\mathbb L^{\geq0}$. The long line  \( \bl \) is obtained by gluing two copies of the closed long ray along their respective minimum elements.
\begin{enumerate}

\item
\( \bl^+ \) has two ends  corresponding to 0 and \( \omega_1 \); \( 0 \) is short and \( \omega_1 \) is long.

\item
\( \bl \) also has two ends, both of which are long.

\item
\( M = \bl \times \br \) has exactly one end and it is neither short nor long.
Note that the end fails to be long as there is a countable sequence of points in \( M \) that converges to the end in \( \cf (M) \) (such as \( \{ (x,n) \}_{n\in \bn} \), where $x\in\mathbb L$ is fixed and arbitrary).  
Furthermore, the end fails to be short as any single-ended manifold with a short end must be a countable union of compact sets and hence Lindel\"of, which implies metrizable in this context. See Figure~\ref{fig:longtimesr}.
\end{enumerate}
\end{Ex}

\subsection{Relative Freudenthal compactification}

In this subsection, we work with Hausdorff, connected, locally connected and locally compact topological spaces. Recall that, by Lemma~\ref{lem:finite}, for such a space $X$ we can assume without loss of generality that $\mathscr K(X)$ consists of those elements $\mathscr U=\{U_0,\ldots,U_n\}$ such that $U_0$ is a bounded open set, and $U_1,\ldots,U_n$ are the connected components of $X\setminus\cl_X(U_0)$. We now proceed to see that, under appropriate hypotheses, certain subspaces of $X$ provide enough information to approximate the space $\mathcal F(X)$ fairly accurately.

\begin{Def}
Let \(X\) be a Hausdorff, connected, locally connected, locally compact topological space and let \(Y\subseteq X\) be a connected open subspace. We will say that \(Y\) is an {\bf adequate} subspace if, for cofinally many bounded (in $Y$) open sets \(U\subseteq Y\), the (finitely many, by Remark~\ref{rem:finite}) components of \(X\setminus\cl_X(U)\) are all unbounded (in $X$).
\end{Def}

So, let $X$ be a Hausdorff, connected, locally connected, and locally compact space, and let $Y$ be an adequate subspace of $X$. Consider the subfamily $\mathscr K_X(Y)$ of $\mathscr K(X)$ consisting of all $\mathscr U=\{U_0,\ldots,U_n\}\in\mathscr K(X)$ such that $\cl(U_0)\subseteq Y$ and where each $U_i$, for $i\neq 0$, is unbounded. The fact that \(Y\) is an adequate subspace of \(X\) ensures that \(\mathscr K_X(Y)\) is a directed set under the partial order inherited from $\mathscr K(X)$. The directed system of topological spaces and continuous mappings given before on the directed set $\mathscr K(X)$ can be restricted to the directed subset $\mathscr K_X(Y)$; we let \( \cf_X(Y) \) denote the inverse limit of this directed subsystem. Notice that $\mathcal F_X(Y)$ is a compact Hausdorff topological space. 
Since $Y$ itself is locally compact (by Remark~\ref{rem:open-subspace}), for each $y\in Y$ we will have that $y\in U_0$ for cofinally many $\mathscr U=\{U_0,\ldots,U_n\}\in\mathscr K_X(Y)$, and thus $Y$ embeds densely into $\mathcal F_X(Y)$. 
This is to say that \( \cf_X(Y) \) is a compactification of \( Y \); a point of \( \ce_X(Y) = \cf_X(Y) \ssm Y \) is an \emph{end of \( Y \) relative to \( X \)}. 
Notice that $\mathcal F(X)=\mathcal F_X(X)$, and similarly $\mathcal E(X)=\mathcal 
E_X(X)$.

\begin{Lem}
\label{lem:relative-projection}
Let \( X \) be a Hausdorff connected, locally compact, locally connected space, and let $Y\subseteq X$ be an adequate subspace. Then there exists a continuous surjective map \( \pi_Y \co \cf(X) \to \cf_X(Y) \) which is the identity on \( Y\) and such that, if \( x \in X, y \in Y \), and \( \pi_Y(x) = y \), then \( x = y \).
Furthermore, the restriction $p_Y:\mathcal E(X)\rightarrow\mathcal E_X(Y)$ of $\pi_Y$ to $\mathcal E(X)$ is surjective.
\end{Lem}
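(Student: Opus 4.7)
The plan is to obtain $\pi_Y$ as the canonical projection of inverse limits induced by the inclusion $\mathscr{K}_X(Y) \subseteq \mathscr{K}(X)$. Since both $\cf(X)$ and $\cf_X(Y)$ are built from the same quotient spaces $X_{\mathscr U}$ and the same bonding maps $\varphi_{\mathscr{U},\mathscr V}$, any compatible family $\xi = (\xi(\mathscr V))_{\mathscr V \in \mathscr K(X)} \in \cf(X)$ restricts to a compatible subfamily $(\xi(\mathscr U))_{\mathscr U \in \mathscr K_X(Y)} \in \cf_X(Y)$, and I set this restriction to be $\pi_Y(\xi)$. Continuity is automatic, as $\pi_Y$ arises from a coordinate projection between product spaces. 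For $y \in Y$, adequacy of $Y$ provides cofinally many $\mathscr U \in \mathscr K_X(Y)$ with $y \in U_0$, so $y$ projects to the singleton class $\{y\}$ in each such $X_{\mathscr U}$; the compatibility of the inverse limit then yields $\pi_Y(y) = y$.

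Next I would prove the implication ``$x \in X,\ y \in Y,\ \pi_Y(x) = y \Rightarrow x = y$'' by cases. If $x \in Y$ and $x \neq y$, an adaptation of Lemma~\ref{lem:directed} within the adequate subspace $Y$ yields $\mathscr U \in \mathscr K_X(Y)$ with $\{x, y\} \subseteq U_0$; in $X_{\mathscr U}$ the points $x$ and $y$ then project to the distinct singletons $\{x\}$ and $\{y\}$, contradicting $\pi_Y(x) = y$. If instead $x \in X \ssm Y$, choose any $\mathscr U \in \mathscr K_X(Y)$ with $y \in U_0$: since $\cl_X(U_0) \subseteq Y$, the point $x$ must lie in $\cl_X(U_i)$ for some $i \geq 1$, so $\pi_Y(x)(\mathscr U) = \cl_X(U_i) \neq \{y\} = \pi_Y(y)(\mathscr U)$. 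Surjectivity of $\pi_Y$ then follows from a compactness--density argument: the image $\pi_Y(\cf(X))$ is compact (hence closed) in $\cf_X(Y)$ and contains the dense subset $Y$, so it equals $\cf_X(Y)$.

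The most delicate step is surjectivity of the restriction $p_Y \co \ce(X) \to \ce_X(Y)$, which requires producing a preimage that is itself an end and not merely a point of $\cf(X) \ssm Y$. Given $\eta \in \ce_X(Y)$, I would first verify that for every $\mathscr U \in \mathscr K_X(Y)$ the coordinate $\eta(\mathscr U)$ cannot be a singleton $\{u\}$ with $u \in U_0$: such an equality at a single index propagates across cofinally many $\mathscr V \geq \mathscr U$ via the inverse-limit compatibility (using again the adaptation of Lemma~\ref{lem:directed} to $\mathscr K_X(Y)$), and this would force $\eta = u \in Y$, contradicting $\eta \in \ce_X(Y)$. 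Hence $\eta(\mathscr U) = \cl_X(U_{i(\mathscr U)})$ for some index $i(\mathscr U) \geq 1$, and since $U_{i(\mathscr U)}$ is unbounded in $X$ by the definition of $\mathscr K_X(Y)$, the set $\widehat{U_{i(\mathscr U)}}$ is a nonempty clopen subset of $\ce(X)$. Compatibility of $\eta$ across $\mathscr U \leq \mathscr U'$ in $\mathscr K_X(Y)$ gives $U'_{i(\mathscr U')} \subseteq U_{i(\mathscr U)}$ (using that each such $U_i$ equals the interior of its closure), hence $\widehat{U'_{i(\mathscr U')}} \subseteq \widehat{U_{i(\mathscr U)}}$; combined with the directedness of $\mathscr K_X(Y)$, this turns $\{\widehat{U_{i(\mathscr U)}}\}_{\mathscr U}$ into a centered family of nonempty closed subsets of the compact space $\ce(X)$, whose intersection is therefore nonempty. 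Any end $e$ in this intersection satisfies $e(\mathscr U) = \cl_X(U_{i(\mathscr U)}) = \eta(\mathscr U)$ for every $\mathscr U \in \mathscr K_X(Y)$, i.e., $p_Y(e) = \eta$. The main obstacle here is the bookkeeping: carefully ruling out the singleton-class case for $\eta$, and then verifying that the compatibility of $\eta$ across refinements translates into the required directed structure on the clopen sets $\widehat{U_{i(\mathscr U)}}$ so that compactness of $\ce(X)$ can be invoked.
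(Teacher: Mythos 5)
Your proposal is correct and follows essentially the same route as the paper: $\pi_Y$ is obtained as the canonical inverse-limit projection induced by the inclusion $\mathscr K_X(Y)\subseteq\mathscr K(X)$, and surjectivity of $p_Y$ comes down to finding an end of $X$ compatible with all coordinates $\eta(\mathscr U)=\cl_X(U_{i(\mathscr U)})$. Your explicit finite-intersection-property argument on the clopen sets $\hat{U}_{i(\mathscr U)}$ is precisely the content the paper compresses into ``it is routine to check that $p_Y(e')=e$'' (and is in fact more careful, since an arbitrary $e'\in\hat{U}_{i(\mathscr U)}$ for a single $\mathscr U$ would not suffice); note only that the containment $\hat{U'}_{i(\mathscr U')}\subseteq\hat{U}_{i(\mathscr U)}$ follows directly from $\cl_X(U'_{i(\mathscr U')})\subseteq\cl_X(U_{i(\mathscr U)})$ and compatibility, so the parenthetical appeal to the $U_i$ being regular open (which need not hold for $i\geq 1$) is unnecessary.
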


\begin{proof}
Since $\mathscr K_X(Y)\subseteq\mathscr K(X)$, for each $\mathscr U\in\mathscr K_X(Y)$ there is a natural projection mapping $\pi_{\mathscr U}:\mathcal F(X)\rightarrow X_{\mathscr U}$; thus by the universal property of inverse limits we obtain a continuous mapping $\pi_Y:\mathcal F(X)\rightarrow\mathcal F_X(Y)$; notice that, since each $\pi_{\mathscr U}$ is surjective, so is $\pi_Y$. It is straightforward to check that \(\pi_Y(y)=y\) for \(y\in Y\), and that if \( x \in X, y \in Y, \) and \( \pi_Y(x)=y \), then \( x = y \). Now let $p_Y$ be the restriction of $\pi_Y$ to $\mathcal E(X)$, and let us argue that $\pi_Y$ is surjective. Let \( e\in\mathcal E_X(Y)=\mathcal F_X(Y)\setminus Y\), and pick an \( x\in\mathcal F(X) \) such that \( \pi_Y(x)=e \). We must have \(x\notin Y\); now if we take an arbitrary \(\mathscr U=\{U_0,U_1,\ldots,U_n\}\in\mathscr K_X(Y)\), there will be a unique \(i\in\{1,\ldots,n\}\) such that \(x\in U_i\). Since \(U_i\) is unbounded and with compact boundary, there exists at least one end \(e'\in\mathcal E(X)\) with \(e'\in\hat{U_i}\). It is routine to check that \(p_Y(e')=e\).
\end{proof}

The following notion will be used extensively in the remainder of the paper.

\begin{Def}\label{def:capturing}
Let \(X\) be a Hausdorff connected, locally compact, locally connected space. We will say that a subspace $Y\subseteq X$ {\em captures} an end $e\in\mathcal E(X)$ if for every other end $e'\in\mathcal E(X)\ssm \{e\}$, there exists an unbounded open set $U$ with compact boundary such that $e\in\hat{U}$, $e'\notin\hat{U}$, and $\partial(U)\subseteq\int(Y)$.
\end{Def}

\begin{Lem}
\label{lem:capturing}
Let \( X \) be a Hausdorff connected, locally connected, locally compact space and let $Y\subseteq X$ be an adequate subspace.
If \( Y \) captures every end of $X$, then \( p_Y  \co \ce(X) \to \ce_X(Y) \) is a homeomorphism. 
\end{Lem}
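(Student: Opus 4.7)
The plan is to exhibit \( p_Y \) as a continuous bijection from a compact space onto a Hausdorff space, which forces it to be a homeomorphism. By Lemma~\ref{lem:relative-projection}, \( p_Y \) is already continuous and surjective; moreover, \( \ce(X) \) is closed in the compact Hausdorff space \( \cf(X) \), hence compact, while \( \ce_X(Y) \) is Hausdorff as a subspace of the inverse limit \( \cf_X(Y) \). So the entire task reduces to proving injectivity of \( p_Y \).

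For injectivity, given distinct ends \( e, e' \in \ce(X) \), I will produce an element \( \mathscr W \in \mathscr K_X(Y) \) with \( e(\mathscr W) \ne e'(\mathscr W) \). First, invoke the capturing hypothesis to obtain an unbounded open set \( U \subseteq X \) with compact boundary such that \( e \in \hat U \), \( e' \notin \hat U \), and \( \partial U \subseteq \int(Y) \). Next, I will turn \( U \) into a union of elements of a partition indexed in \( \mathscr K_X(Y) \), using \( Y \)-local compactness together with adequacy: cover the compact set \( \partial U \) by finitely many open subsets of \( Y \) that are bounded in \( Y \), take their union \( W_0' \), and use adequacy to enlarge \( W_0' \) to a bounded open set \( W_0 \subseteq Y \) such that the (finitely many, by Remark~\ref{rem:finite}) components \( V_1, \ldots, V_n \) of \( X \ssm \cl_X(W_0) \) are all unbounded. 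A quick argument using the compactness of \( \cl_Y(W_0) \) shows \( \cl_X(W_0) = \cl_Y(W_0) \subseteq Y \), so \( \mathscr W = \{W_0, V_1, \ldots, V_n\} \) lies in \( \mathscr K_X(Y) \).

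To conclude, observe that each \( V_i \) is connected and disjoint from \( \partial U \subseteq W_0 \), so \( V_i \) lies either entirely in \( U \) or entirely in \( X \ssm \cl_X(U) \). Letting \( I = \{ i : V_i \subseteq U \} \), the symmetric difference between \( U \) and \( \bigcup_{i\in I} V_i \) is contained in the bounded set \( \cl_X(W_0) \), so \( \hat U = \bigsqcup_{i\in I} \hat{V_i} \) as clopen subsets of \( \ce(X) \). Consequently, \( e \in \hat U \) forces \( e \in \hat{V_i} \) for some \( i \in I \), while \( e' \notin \hat U \) forces \( e' \in \hat{V_j} \) for some \( j \notin I \); the different components distinguish \( e \) from \( e' \) in \( P_{\mathscr W} \), so \( p_Y(e) \ne p_Y(e') \).

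The main obstacle I anticipate is the construction of \( W_0 \), which has to meet three constraints simultaneously: \( \partial U \subseteq W_0 \), \( \cl_X(W_0) \subseteq Y \), and every component of \( X \ssm \cl_X(W_0) \) must be unbounded. This is precisely where the adequacy of \( Y \) and the stronger inclusion \( \partial U \subseteq \int(Y) \) from the definition of capturing (rather than merely \( \partial U \subseteq Y \)) become indispensable.
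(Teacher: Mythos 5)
Your proposal is correct and follows essentially the same route as the paper: reduce to injectivity via the compact-to-Hausdorff argument, use the capturing hypothesis to obtain a separating open set \( U \), and thicken \( \partial(U) \) inside \( Y \) to a bounded open \( W_0 \) yielding an element of \( \mathscr K_X(Y) \) that distinguishes the two ends. The only (immaterial) difference is that the paper separates the ends with the two explicit sets \( W_1 = U \ssm \cl_X(W_0) \) and \( W_2 = X \ssm (\cl_X(W_0) \cup U) \), checking their unboundedness directly from \( e \in \hat{W_1} \) and \( e' \in \hat{W_2} \), whereas you invoke adequacy to make all complementary components of \( \cl_X(W_0) \) unbounded and then sort them according to whether they lie in \( U \).
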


\begin{proof} 
From Lemma~\ref{lem:relative-projection}, we already know that \( p_Y \) is surjective; since $p_Y$ is a continuous mapping between compact Hausdorff spaces, it must be an open map as well. So, we need only show that \( p_Y \) is injective. 

Take two elements $e,e'\in\mathcal E(X)$ with $e\neq e'$. Since $Y$ captures $e$, there is an unbounded open set $U$ with $\partial(U)$ compact, $\partial(U)\subseteq Y$, and $e\in\hat{U}$, $e'\notin\hat{U}$. Take a bounded open set $W_0$, such that $\partial(U)\subseteq W_0\subseteq\cl_X(W_0)\subseteq Y$ and let $\mathscr W=\{W_0,W_1,W_2\}$, where $W_1=U\setminus\cl_X(W_0)$ and $W_2=X\setminus(\cl_X(W_0)\cup\cl_X(U))=X\setminus(\cl_X(W_0)\cup U)$. Since $e\in\hat{W_1}$ and $e'\in\hat{W_2}$, both sets $W_1$ and $W_2$ are unbounded. Hence, $\mathscr W\in\mathscr K_X(Y)$. Since $p_Y(e)(\mathscr W)=W_1\neq W_2=p_Y(e')(\mathscr W)$, this shows that $p_Y$ is injective.
\end{proof}

It remains to show the precise relationship between the relative Freudenthal compactification \(\mathcal F_X(Y)\) and the Freudenthal compactification of \(Y\).

\begin{Lem}\label{lem:surjection-onto-relative}
Let $X$ be a Hausdorff connected, locally compact, locally connected space and let $Y\subseteq X$ be an adequate subspace. Then, there exists a continuous surjective map \(\Pi_Y\co\mathcal F(Y)\rightarrow\mathcal F_X(Y)\) which is the identity on \(Y\), and whose restriction \(P_Y\co\mathcal E(Y)\rightarrow\mathcal E_X(Y)\) to \(\mathcal E(Y)\) is also surjective.
\end{Lem}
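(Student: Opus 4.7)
The plan is to build $\Pi_Y$ as the inverse-limit map induced by a natural order-preserving function $\tau\co\mathscr K_X(Y)\to\mathscr K(Y)$ together with a compatible family of homeomorphisms between the corresponding finite quotient spaces.

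Given $\mathscr U=\{U_0,U_1,\ldots,U_n\}\in\mathscr K_X(Y)$, I would set $\tilde{\mathscr U}:=\{U_0,U_1\cap Y,\ldots,U_n\cap Y\}$. The main step here is to verify that $\tilde{\mathscr U}\in\mathscr K(Y)$, and the only nontrivial condition is compactness of $\partial_Y(U_i\cap Y)$ for $i\geq 1$. The key computation, which uses in a fundamental way that $Y$ is open in $X$, is the identity $\partial_Y(U_i\cap Y)=\partial_X(U_i)$: since $\partial_X(U_i)\subseteq\partial_X(U_0)\subseteq Y$ and $Y$ is open, every neighborhood of a point $p\in\partial_X(U_i)$ is eventually contained in $Y$ and meets $U_i$, so $p\in\cl_X(U_i\cap Y)$. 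A short computation then shows $\cl_Y(U_i\cap Y)=(U_i\cap Y)\cup\partial_X(U_i)=\cl_X(U_i)\cap Y$, so $\partial_Y(U_i\cap Y)=\partial_X(U_i)$ is compact, the remaining partition conditions for $\tilde{\mathscr U}$ follow by intersecting the partition $P_{\mathscr U}$ of $X$ with $Y$, and the order preservation of $\tau$ (i.e., $\mathscr U\leq\mathscr V$ implies $\tilde{\mathscr U}\leq\tilde{\mathscr V}$) is immediate from the same identity applied to both $\mathscr U$ and $\mathscr V$.

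Next, for each $\mathscr U\in\mathscr K_X(Y)$, I would define $\phi_{\mathscr U}\co Y_{\tilde{\mathscr U}}\to X_{\mathscr U}$ by $\{y\}\mapsto\{y\}$ for $y\in U_0$ and $\cl_Y(U_i\cap Y)\mapsto\cl_X(U_i)$ for $i\geq 1$. Using $\cl_Y(U_i\cap Y)=\cl_X(U_i)\cap Y$ from the previous step, this is a bijection, and openness of $Y$ in $X$ together with the quotient topologies makes it continuous; since both domain and codomain are compact Hausdorff, $\phi_{\mathscr U}$ is a homeomorphism. A direct check shows the $\phi_{\mathscr U}$ intertwine the bonding maps: for $\mathscr U\leq\mathscr V$ in $\mathscr K_X(Y)$, one has $\varphi_{\mathscr U,\mathscr V}\circ\phi_{\mathscr V}=\phi_{\mathscr U}\circ\varphi_{\tilde{\mathscr U},\tilde{\mathscr V}}$, because both sides merely track which block of $P_{\mathscr U}$ contains a given block of $P_{\mathscr V}$.

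Composing with the canonical projections $\pi_{\tilde{\mathscr U}}\co\mathcal F(Y)\to Y_{\tilde{\mathscr U}}$ then yields a cone $\{\phi_{\mathscr U}\circ\pi_{\tilde{\mathscr U}}\}_{\mathscr U\in\mathscr K_X(Y)}$ from $\mathcal F(Y)$ over the inverse system defining $\mathcal F_X(Y)$, so the universal property of the inverse limit supplies a continuous map $\Pi_Y\co\mathcal F(Y)\to\mathcal F_X(Y)$. For $y\in Y$, the value $\phi_{\mathscr U}(\pi_{\tilde{\mathscr U}}(y))$ is, by construction, the block of $P_{\mathscr U}$ containing $y$, which is the image of $y$ under the embedding $Y\hookrightarrow\mathcal F_X(Y)$; hence $\Pi_Y|_Y=\mathrm{id}_Y$. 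Surjectivity of $\Pi_Y$ then follows by a density argument: its image is compact, hence closed in $\mathcal F_X(Y)$, and contains the dense set $Y$. Finally, any $e\in\mathcal E_X(Y)$ has nonempty $\Pi_Y$-preimage disjoint from $Y$ (since $\Pi_Y$ fixes $Y$ and $e\notin Y$), so $e$ is hit by some $e'\in\mathcal E(Y)$, yielding surjectivity of $P_Y$. The main obstacle is the boundary identity $\partial_Y(U_i\cap Y)=\partial_X(U_i)$; once this is in hand, everything else is a bookkeeping exercise linking the two directed systems.
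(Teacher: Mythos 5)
Your overall strategy is the same as the paper's---send $\mathscr U=\{U_0,\ldots,U_n\}\in\mathscr K_X(Y)$ to $\{U_0,U_1\cap Y,\ldots,U_n\cap Y\}$, identify the finite quotient spaces, and invoke the universal property of the inverse limit---and everything from the definition of $\phi_{\mathscr U}$ onward is correct. However, the step you single out as the crux contains a genuine error: the containment $\partial_X(U_i)\subseteq\partial_X(U_0)$ is false for a general element of $\mathscr K_X(Y)$, and without it your map $\tau$ is not even well defined. The definition of $\mathscr K_X(Y)$ only demands $\cl_X(U_0)\subseteq Y$ and that the $U_i$ ($i\geq1$) be unbounded; nothing forces $\partial(U_i)$ into $\cl_X(U_0)$, let alone into $Y$. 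Concretely, take $X=\mathbb R^2$, $Y=B(0,10)$, $U_0=B(0,1)$, and $U_1=\mathbb R^2\ssm\left(\cl_X(U_0)\cup K\right)$ with $K=[5,15]\times\{0\}$. Then $\{U_0,U_1\}\in\mathscr K_X(Y)$ (one checks $\cl_X(U_1)=\mathbb R^2\ssm U_0$, so the partition condition holds and $\partial(U_1)=\partial(U_0)\cup K$ is compact), but $\partial(U_1)\not\subseteq Y$, and $\partial_Y(U_1\cap Y)=\partial(U_1)\cap Y=\partial(U_0)\cup\left([5,10)\times\{0\}\right)$ fails to be compact, so $\{U_0,U_1\cap Y\}\notin\mathscr K(Y)$.

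The repair is exactly the reduction the paper makes in its first sentence: by Lemma~\ref{lem:finite} (together with Remark~\ref{rem:finite} and the adequacy of $Y$), the elements $\mathscr U\in\mathscr K_X(Y)$ for which $U_1,\ldots,U_n$ are precisely the connected components of $X\ssm\cl_X(U_0)$ form a cofinal subfamily; for those one really does have $\partial(U_i)\subseteq\cl_X(U_0)\subseteq Y$, your boundary identity $\partial_Y(U_i\cap Y)=\partial(U_i)$ holds, and the rest of your argument goes through verbatim. You then need one extra sentence observing that a compatible cone defined over a cofinal subfamily of the index set suffices to induce the map into the inverse limit (extend to all of $\mathscr K_X(Y)$ by composing with the bonding maps, or use that the inverse limit over a cofinal subset is canonically homeomorphic to the full one). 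With that amendment your proof coincides with the paper's, with the pleasant difference that you spell out the verification of the ``obvious homeomorphism'' between $Y_{\tilde{\mathscr U}}$ and $X_{\mathscr U}$ via the identity $\cl_Y(U_i\cap Y)=\cl_X(U_i)\cap Y$, and your derivation of the surjectivity of $P_Y$ directly from $\Pi_Y|_Y=\mathrm{id}_Y$ is a clean shortcut past the paper's appeal to the argument of Lemma~\ref{lem:relative-projection}.
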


\begin{proof}
Take an arbitrary \( \mathscr U=\{U_0,U_1,\ldots,U_n\}\in\mathscr K_X(Y) \) and assume without loss of generality (by Lemma~\ref{lem:finite}) that \(\{U_1,\ldots,U_n\}\) are the connected components of \(X\setminus\cl_X(U_0)\). Note that the set \(\mathscr V=\{U_0,U_1\cap Y,\ldots,U_n\cap Y\}\in\mathscr K(Y)\) and there is an obvious homeomorphism between the spaces \(Y_{\mathscr V}\) and \(X_{\mathscr U}\). Composing this homeomorphism with the projection \(\pi_{\mathscr V}\co\mathcal F(Y)\rightarrow Y_{\mathscr V}\), we obtain onto maps \(\mathcal F(Y)\rightarrow X_{\mathscr U}\) for each \(\mathscr U\in\mathscr K_X(Y)\); the universal property of inverse limits thus yields a continuous onto map \(\Pi_Y\co\mathcal F(Y)\rightarrow\mathcal F_X(Y)\); it is fairly straightforward to show that this map is the identity on \(Y\). One can show, exactly as in Lemma~\ref{lem:relative-projection}, that if $x\in\mathcal F(Y)$ and $\Pi_Y(x)=y\in Y$, then in fact $x=y$; from this it immediately follows that the restriction $P_Y\co\mathcal E(Y)\rightarrow \mathcal E_X(Y)$ of $\Pi_Y$ to $\mathcal E(Y)$ is surjective.
\end{proof}

\begin{Cor}\label{cor:nice-subspace}
Let $X$ be a Hausdorff connected, locally compact, locally connected space and let $Y\subseteq X$ be an open subspace that captures every end of $X$. Then the space $\mathcal E(X)$ is a quotient of $\mathcal E(Y)$.
\end{Cor}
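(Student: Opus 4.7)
The plan is to combine Lemmas~\ref{lem:capturing} and~\ref{lem:surjection-onto-relative} via composition, and then invoke the standard fact that a continuous surjection between compact Hausdorff spaces is a quotient map. The first task is to verify that the open, capturing subspace $Y$ is adequate, so that both lemmas apply. Since $Y$ is open in $X$, Remark~\ref{rem:open-subspace} gives local compactness, and Remark~\ref{rem:finite} supplies bounded open sets $U\subseteq Y$ with $\cl_X(U)\subseteq Y$ having only finitely many complementary components in $X$. Using the capturing hypothesis, the aim is to enlarge such $U$ by absorbing bounded complementary components until every remaining complementary component is unbounded in $X$, yielding the cofinal family of elements of $\mathscr K_X(Y)$ witnessing adequacy.

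With adequacy in hand, Lemma~\ref{lem:capturing} furnishes a homeomorphism $p_Y\co\ce(X)\to\ce_X(Y)$, and Lemma~\ref{lem:surjection-onto-relative} furnishes a continuous surjection $P_Y\co\ce(Y)\to\ce_X(Y)$. The composition $p_Y^{-1}\circ P_Y\co\ce(Y)\to\ce(X)$ is then a continuous surjection. By Proposition~\ref{prop:ends} both $\ce(X)$ and $\ce(Y)$ are compact Hausdorff, so this map is automatically closed, and a closed continuous surjection onto a Hausdorff space is a quotient map. This delivers the corollary.

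The main obstacle is the adequacy verification. A bounded complementary component $C$ of $\cl_X(U)$ satisfies $\partial(C)\subseteq\cl_X(U)\subseteq Y$, but its interior may a priori stick out of $Y$. What needs to be shown is that the capturing hypothesis forces such components to sit inside $Y$—or at least to be swallowable through a finite iteration of enlargements—so that the closure of the enlarged $U$ stays inside $Y$. Since bounded components carry no end of $X$, one expects capturing (which separates ends by sets with boundary in $Y$) to be exactly what is needed to keep the absorption process inside $Y$. Once this topological bookkeeping is in place, the remainder of the argument is formal.
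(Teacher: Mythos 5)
Your core argument coincides with the paper's proof: compose the continuous surjection \( P_Y\co\ce(Y)\to\ce_X(Y) \) from Lemma~\ref{lem:surjection-onto-relative} with the inverse of the homeomorphism \( p_Y\co\ce(X)\to\ce_X(Y) \) from Lemma~\ref{lem:capturing}, and conclude that the resulting continuous surjection between compact Hausdorff spaces is a quotient map. (You correctly say such a map is closed; the paper's proof says ``open,'' which is a slip, but the conclusion is the same.)

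The divergence---and the gap---is in your adequacy verification. You are right to notice that all three lemmas are stated for \emph{adequate} subspaces while the corollary only assumes that \( Y \) is open and captures every end; the paper's own proof silently passes over this. But your proposed repair, namely that the capturing hypothesis forces bounded complementary components to be absorbable while staying inside \( Y \), does not work: capturing is a statement about separating \emph{ends}, and bounded complementary components carry no ends, so capturing imposes no constraint on them. Concretely, take \( X=\br^2 \) and \( Y=\br^2\ssm\{0\} \). Then \( Y \) is open, connected, and captures the unique end of \( X \) vacuously, yet for any bounded open \( U\subseteq Y \) containing an annulus encircling the origin, the component of \( X\ssm\cl_X(U) \) containing \( 0 \) is bounded and can never be absorbed while keeping \( \cl_X(U)\subseteq Y \); no finite (or infinite) iteration of enlargements helps, so \( Y \) is not adequate. (The conclusion of the corollary still holds in this example for trivial reasons, but not via the route you describe.) Since you identify this step as the main obstacle and the mechanism you propose for closing it rests on a false implication, the proof is incomplete exactly there. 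The honest fixes are either to add adequacy to the hypotheses of the corollary or to verify adequacy separately for the specific subspace in each application, which is what the paper actually does, e.g., for the set \( U \) in the proof of Theorem~\ref{thm:gen-bag-lemma}.
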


\begin{proof}
By Lemma~\ref{lem:relative-projection}, there exists a continuous surjective mapping $p_Y\co\mathcal E(X)\rightarrow\mathcal E_X(Y)$; furthermore, by Lemma~\ref{lem:capturing}, this map is a homeomorphism because $Y$ captures every end of $X$. Now use Lemma~\ref{lem:surjection-onto-relative} to obtain a continuous surjective map $P_Y\co\mathcal E(Y)\rightarrow\mathcal E_X(Y)$. Then we have a continuous, surjective map $\varphi_Y\co\mathcal E(Y)\rightarrow\mathcal E(X)$ given by composing $P_Y$ with $p_Y^{-1}$. This mapping is open (as it is a continuous map between compact Hausdorff spaces) and therefore it is a quotient map, and so $\mathcal E(X)$ can be seen as a quotient of $\mathcal E(Y)$.
\end{proof}

%----------------------------------------------------------
% A criterion for metrizability in terms of ends
%----------------------------------------------------------

\section{A criterion for metrizability in terms of ends}
\label{sec:criterion}

Before discussing the relationship between ends and metrizability, let us recall some equivalent conditions for a  manifold to be metrizable, which we will later use without reference.

\begin{Thm}[{\cite[Theorem 2.1]{GauldNonmetrisable}}]
Let \( M \) be a manifold. 
The following are equivalent:
\begin{enumerate}[(i)]
\item \( M \) is metrizable,
\item \( M \) is second countable, 
\item \( M \) is Lindel\"of,
\item \( M \) is hereditarily Lindel\"of, and
\item \( M \) is hemicompact. \qed
\end{enumerate}

\end{Thm}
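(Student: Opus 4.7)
The plan is to establish the cycle (v) $\Rightarrow$ (iii) $\Rightarrow$ (ii) $\Rightarrow$ (i) $\Rightarrow$ (v), and handle (iv) separately by noting that (iv) $\Rightarrow$ (iii) is trivial while (iii) $\Rightarrow$ (iv) follows once we know that (iii) implies second countability. Throughout I will exploit, without explicit mention, that every manifold is Hausdorff, locally compact (and hence regular), and locally second countable, since it is locally homeomorphic to $\mathbb{R}^n$.

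First I would dispense with the easier implications. The implication (v) $\Rightarrow$ (iii) is immediate, since a hemicompact space is, by definition, a countable union of compact (hence Lindel\"of) sets, and a countable union of Lindel\"of sets is Lindel\"of. For (iii) $\Rightarrow$ (ii), I would cover $M$ by open sets homeomorphic to $\mathbb{R}^n$, extract a countable subcover $\{V_k\}$ by Lindel\"ofness, and take the union of countable bases of the $V_k$; this produces a countable basis for $M$. For (ii) $\Rightarrow$ (i), the Urysohn metrization theorem applies, since $M$ is regular and second countable.

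The more delicate step is (i) $\Rightarrow$ (v). Here $M$ is a metrizable manifold and so paracompact. I would cover $M$ by relatively compact open sets and take a locally finite refinement $\mathscr{V}$; the ``intersection graph'' on $\mathscr{V}$ (with vertices the elements of $\mathscr{V}$ and edges the pairs with nonempty intersection) is locally finite, using local finiteness and local compactness, and connectedness of $M$ forces this graph to be connected, hence countable. This yields a $\sigma$-compact exhaustion $M = \bigcup_n K_n$, which can be rearranged so that $K_n \subseteq \int(K_{n+1})$; any compact $K \subseteq M$ is then covered by $\{\int(K_n)\}$ and hence contained in some $K_n$, establishing hemicompactness.

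Finally, (iii) $\Rightarrow$ (iv) follows once (iii) yields (ii) and (i): the manifold $M$ becomes a second countable metric space, and subspaces of second countable spaces are themselves second countable, and so Lindel\"of. The main obstacle I anticipate is the implication (i) $\Rightarrow$ (v), as it is the only step where connectedness plays an essential role, and extracting $\sigma$-compactness from paracompactness together with local compactness requires care in the combinatorial argument with the intersection graph of a locally finite cover.
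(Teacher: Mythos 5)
Your argument is correct. Note that the paper does not prove this statement at all: it is quoted verbatim from Gauld's book (cited as Theorem~2.1 of \emph{Non-metrisable Manifolds}) and stamped with a \(\qed\), so there is no in-paper proof to compare against. Your cycle \((v)\Rightarrow(iii)\Rightarrow(ii)\Rightarrow(i)\Rightarrow(v)\), with \((iv)\) attached via \((iv)\Rightarrow(iii)\) trivially and \((iii)\Rightarrow(ii)\Rightarrow(iv)\) using that second countability is hereditary and implies Lindel\"ofness, is the standard self-contained route and each step checks out: local compactness gives regularity for Urysohn; hemicompactness covers the space because singletons are compact (your phrase ``by definition a countable union of compact sets'' is a slight abuse, but harmless for this reason); and in \((i)\Rightarrow(v)\) the intersection graph of a locally finite cover by relatively compact sets is indeed locally finite (each \(\cl(V)\) is compact and so meets only finitely many members of the cover) and connected (otherwise the unions over two graph components would disconnect \(M\)), hence countable, yielding \(\sigma\)-compactness and then hemicompactness via a nested exhaustion. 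This is exactly the place where connectedness is essential, as you flag.
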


We now need to formally introduce the definition of a type I manifold.

\begin{Def}
A manifold $M$ is of {\bf type I} if there exists a sequence $\langle M_\alpha\big|\alpha<\omega_1\rangle$ of open Lindel\"of subspaces such that $\cl_M(M_\alpha)\subseteq M_{\alpha+1}$ for all $\alpha<\omega_1$.
\end{Def}

Thus, every metrizable manifold is of type I (just take the constant sequence $M_\alpha=M$). Among non-metrizable manifolds, those that are of type I are much better behaved.  If $M$ is a type I manifold and $\langle M_\alpha\big|\alpha<\omega_1\rangle$ is a sequence witnessing this, it is trivial to modify the sequence so that it furthermore satisfies $M_\alpha=\bigcup_{\xi<\alpha}M_\xi$ whenever $\alpha$ is a limit ordinal; once this condition is satisfied we say that the sequence of $M_\alpha$ is a {\em canonical sequence}. The canonical sequence witnessing that a manifold is of type I is essentially unique, in the sense that any two canonical sequences must agree on a closed unbounded set of $\alpha<\omega_1$.

\subsection{Characterizing metrizability}
By Proposition~\ref{prop:ends}, the Freudenthal compactification of a metrizable manifold is second countable; hence, the end space of a  metrizable manifold is second countable and every end of the manifold is short (as every end has a countable neighborhood basis).
The goal of this section is to prove a partial converse:

\begin{Thm}
\label{thm:characterize}
Let \( M \) be a  manifold in which every end is short.
\begin{enumerate}[(i)]
\item If \( \ce(M) \) is countable, then \( M \) is metrizable.
\item If \( M \) is of type I and \( \ce(M) \) is second countable, then \( M \) is metrizable.
\end{enumerate}
\end{Thm}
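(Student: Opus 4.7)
Both parts aim to show $M$ is $\sigma$-compact, which in a manifold is equivalent to being Lindel\"of, i.e., metrizable (by the equivalences recalled just before the theorem).

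For part (i), the plan is a direct diagonalization using the countability of $\ce(M)$. Enumerate $\ce(M)=\{e_n\}_{n\in\bn}$ and, using shortness, fix for each $n$ a decreasing countable neighborhood basis $\{U^n_k\cup\hat{U^n_k}\}_{k\in\bn}$ of $e_n$ in $\cf(M)$, with each $U^n_k$ open, unbounded, of compact boundary, and intersection equal to $\{e_n\}$. For each finite $F\subseteq\bn$ and function $k\co F\to\bn$ such that $\{U^n_{k(n)}\cup\hat{U^n_{k(n)}}\}_{n\in F}$ covers $\ce(M)$, the set $K_{F,k}:=M\ssm\bigcup_{n\in F}U^n_{k(n)}$ is closed in $\cf(M)$ and disjoint from $\ce(M)$, hence compact in $M$; there are countably many such pairs $(F,k)$. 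Given any $m\in M$, for each $n$ we pick $k_n$ with $m\notin U^n_{k_n}$ (possible since $\bigcap_k U^n_k\cap M=\varnothing$), and then compactness of $\ce(M)$ extracts from $\{U^n_{k_n}\cup\hat{U^n_{k_n}}\}_n$ a finite subcover indexed by some $F\subseteq\bn$; then $m\in K_{F,k|_F}$. Hence $M$ is a countable union of compact sets.

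For part (ii), the plan is to use type I together with second countability of $\ce(M)$ to force a contradiction from non-metrizability. Suppose $M$ is non-metrizable and fix a canonical sequence $\{M_\alpha\}_{\alpha<\omega_1}$ with $M_\alpha\ne M$ for all $\alpha$. Using second countability, fix a countable basis $\{\hat{U_n}\}_{n\in\bn}$ of $\ce(M)$, with each $U_n$ open, unbounded, of compact boundary. The set $L:=\bigcup_n\partial U_n$ is $\sigma$-compact, hence Lindel\"of; since in a type I manifold every Lindel\"of subspace lies in some $M_\alpha$ (apply Lindel\"ofness to the open cover $\{L\cap M_\alpha\}$), we have $L\subseteq M_{\alpha_0}$ for some countable $\alpha_0$. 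Because $\cl_M(M_{\alpha_0})\subseteq M_{\alpha_0+1}\ne M$, the set $\Omega_{\alpha_0}:=M\ssm\cl_M(M_{\alpha_0})$ is non-empty, and it must be unbounded—otherwise $M=\cl_M(M_{\alpha_0})\cup\cl_M(\Omega_{\alpha_0})$ would be a union of two Lindel\"of subspaces and thus itself Lindel\"of. The plan is to locate a connected component $C$ of $\Omega_{\alpha_0}$ with $\hat C\ne\varnothing$ and pick an end $e\in\hat C$, which by hypothesis is short. Since $\{\hat{U_n}:e\in\hat{U_n}\}$ is a neighborhood basis of $e$ in $\ce(M)$ and each $\partial U_n\subseteq L\subseteq M_{\alpha_0}$ is disjoint from $\Omega_{\alpha_0}$, connectedness of $C$ forces, for each such $n$, either $C\subseteq U_n$ or $C\cap U_n=\varnothing$; the latter is excluded because disjoint open sets with compact boundary have disjoint hats, whereas $e\in\hat C\cap\hat{U_n}$. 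Intersecting over all such $n$ and invoking shortness of $e$ gives $C\subseteq\bigcap\{U_n:e\in\hat{U_n}\}\cap M=\varnothing$, contradicting $C\ne\varnothing$.

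The main technical obstacles are (a) the fact that disjoint open sets with compact boundary have disjoint hats, which is established by placing both compact boundaries into a single bounded distinguished open set $W_0$, using Lemma~\ref{lem:finite} to partition $M\ssm\cl_M(W_0)$ into finitely many unbounded components to obtain a common element $\mathscr W\in\mathscr K(M)$, and then comparing $e(\mathscr T)$ under a refinement $\mathscr T$ of $\mathscr W$ with an element of $\mathscr K(M)$ containing the original sets; and (b) the key step in part (ii) of locating a component $C$ of $\Omega_{\alpha_0}$ with $\hat C\ne\varnothing$. For (b), one arranges—by replacing $M_{\alpha_0}$ with a slightly larger open Lindel\"of subspace, built via Remark~\ref{rem:finite} applied iteratively to a $\sigma$-compact exhaustion of $M_{\alpha_0}$—that $\Omega_{\alpha_0}$ has finitely many components, each open with compact boundary; then the unboundedness of $\Omega_{\alpha_0}$ transfers to at least one of them, producing the component $C$ and hence the short end $e$ needed for the contradiction.
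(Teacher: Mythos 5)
Your part (i) is correct, and it is a genuinely different argument from the paper's. The paper routes both parts through a single machine (Lemma~\ref{lem:end-exhaustion} and Proposition~\ref{prop:sep-omega}): it builds a Lindel\"of open set \( U \) capturing every end, shows each unbounded complementary component has a unique, short end in its \( \cf(M) \)-closure and is therefore a countable union of bounded sets, and then counts components. Your diagonalization over pairs \( (F,k) \) exhibits \( M \) directly as a countable union of the compact sets \( K_{F,k} = M \ssm \bigcup_{n\in F} U^n_{k(n)} \); this is shorter and more self-contained for the countable-end-space case, at the cost of not generalizing to part (ii).

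Part (ii) has two genuine gaps. First, the reduction to ``\( \Omega_{\alpha_0} \) has finitely many components, each with compact boundary'' is not justified: Remark~\ref{rem:finite} controls the complementary components of a neighborhood of a single \emph{compact} set, and finiteness of complementary components is not preserved under increasing unions. Indeed, in the tree manifold of Appendix~\ref{sectionexamples}, the open Lindel\"of set \( \int(S_\beta) \) is an increasing union of compacta each with finitely many complementary components, yet \( M \ssm S_\beta \) has one component for each node of \( T_{\beta+1} \), i.e.\ infinitely many. Without this step you have not produced a component \( C \) for which \( \hat C \) is even defined (its boundary need only lie in the Lindel\"of, not compact, set \( \partial(\cl_M(M_{\alpha_0})) \)). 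Second, and more seriously, the concluding equality \( \bigcap\{U_n : e\in\hat{U_n}\}\cap M=\varnothing \) does not follow from shortness of \( e \): the sets \( \hat{U_n} \) form a neighborhood basis of \( e \) in \( \ce(M) \), not in \( \cf(M) \). For example, in \( \bl^+ \) the short end \( e_0 \) satisfies \( \{e_0\}=\hat{U_1} \) for \( U_1=(0,1) \), so \( \{\hat{U_1}\} \) is already a neighborhood basis of \( e_0 \) in the (discrete) end space, yet \( U_1\cap \bl^+\neq\varnothing \). Shortness gives you \emph{some} countable neighborhood basis \( \{W_k\cup\hat{W_k}\} \) of \( e \) in \( \cf(M) \), but the \( \partial W_k \) were not placed inside \( L \), so you cannot conclude \( C\subseteq W_k \). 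The correct conclusion from your setup is only that \( \hat C=\{e\} \) with \( e \) short, whence \( C\ssm W_k \) is bounded for each \( k \) and \( C \) is Lindel\"of --- not empty. To finish from there you must show that \emph{all} (countably many, by the type~I argument) components are Lindel\"of and sum them, which is precisely the paper's proof of Proposition~\ref{prop:sep-omega}.
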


In the case of type I manifolds, Theorem~\ref{thm:characterize} gives a complete converse to Proposition~\ref{prop:ends}:

\begin{Cor}
\label{cor:characterize-typeI}
A type I manifold is metrizable if and only if its end space is second countable and every end is short\footnote{Of course if a manifold is compact then its end space is empty, so the statement is only of interest for non-compact manifolds of type I.}. \qed
\end{Cor}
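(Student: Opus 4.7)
The plan is to deduce the corollary immediately by combining the two directions that have already been (nearly) established, so the proof should be essentially a one-liner.

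For the forward implication, I would start with a metrizable type I manifold $M$. Since a manifold is always connected, Hausdorff, and locally compact, and since a metrizable manifold (of positive dimension) has no isolated points, Proposition~\ref{prop:ends} applies and yields that $\mathcal F(M)$ is second countable, hence so is its closed subspace $\mathcal E(M)$. It remains to check that every end is short, i.e., a $G_\delta$ point of $\mathcal F(M)$. This is immediate from the fact that every point in a second countable Hausdorff space is $G_\delta$: intersect the countably many basic open neighborhoods of the point. Equivalently, one can invoke the lemma stated just after the definition of short/long ends, since in a compact second countable Hausdorff space every point has a countable neighborhood basis.

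For the backward implication, I would simply cite Theorem~\ref{thm:characterize}(ii), which gives exactly the required conclusion under the hypotheses that $M$ is of type I, that $\mathcal E(M)$ is second countable, and that every end of $M$ is short. The footnote about the compact case is a harmless remark: if $M$ is compact then $\mathcal E(M)=\varnothing$, so the hypotheses are vacuously satisfied, and the corollary remains true, though trivially so.

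The only thing worth double-checking is the ``no isolated points'' hypothesis required to invoke Proposition~\ref{prop:ends}: this holds for any manifold of positive dimension, since local Euclidean structure rules out isolated points. Thus there is no genuine obstacle here; the corollary is really just a packaging of Proposition~\ref{prop:ends} (forward) together with Theorem~\ref{thm:characterize}(ii) (backward), and the proof can be written in two or three lines.
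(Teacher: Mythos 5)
Your proposal is correct and matches the paper's intended argument exactly: the forward direction is the observation (made just before Theorem~\ref{thm:characterize}) that Proposition~\ref{prop:ends} gives second countability of \( \cf(M) \), hence of \( \ce(M) \), and a countable neighborhood basis at each end; the backward direction is precisely Theorem~\ref{thm:characterize}(ii). This is why the paper marks the corollary with \qed and gives no separate proof.
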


In Appendix~\ref{sectionexamples}, we will describe an example of a (type~I) non-metrizable manifold in which every end is short; so, we see that the second countability is a necessary condition in Corollary~\ref{cor:characterize-typeI}. In an earlier version of this paper, we conjectured that the type I condition was unnecessary; M. Baillif has provided a counterexample to this conjecture---a non-metrizable (not of type I) manifold with a second-countable end space and every end short---which is now described in Appendix~\ref{appendix-prufer}.

Before proving Theorem~\ref{thm:characterize}, we introduce some lemmas culminating with Proposition~\ref{prop:sep-omega}, which is a strengthening of Theorem~\ref{thm:characterize}.  
The main idea is to show that under the conditions on the end space, the manifold is ``almost hemicompact", which is enough in the case of a type I manifold to guarantee metrizability.

\begin{Lem}
\label{lem:connected-cofinal}
Suppose that $M$ is a manifold. Then, the collection of all $\mathscr U=\{U_0,\ldots,U_n\}\in\mathscr K(M)$ satisfying:
\begin{itemize}
\item $U_0$ connected,
\item for all $i\in\{1,\ldots,n\}$, $\partial(U_i)\subseteq\cl_M(U_0)$
\end{itemize}
is cofinal in $\mathscr K(M)$.
\end{Lem}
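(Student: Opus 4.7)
My plan is, given any $\mathscr V = \{V_0, V_1, \ldots, V_m\} \in \mathscr K(M)$, to produce a refinement $\mathscr U \geq \mathscr V$ whose $U_0$ is connected and whose $U_j$ for $j \geq 1$ arise as ``trimmings'' of the $V_j$ by a slightly enlarged $U_0$, so that the boundary condition $\partial U_j \subseteq \cl_M(U_0)$ becomes automatic.

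First, by Lemma~\ref{lem:finite} and the observation recorded just after it in the paper, I may assume without loss of generality that $V_1, \ldots, V_m$ are precisely the connected components of $M \setminus \cl_M(V_0)$; in particular $\partial V_j \subseteq \partial V_0 \subseteq \cl_M(V_0)$ for each $j \geq 1$. Next, I construct a connected bounded open set $U_0'$ containing $\cl_M(V_0)$ by exploiting that a connected manifold is path-connected: fix a base point $p \in \cl_M(V_0)$; for each $x \in \cl_M(V_0)$, cover a path from $p$ to $x$ in $M$ by a chain of overlapping bounded coordinate balls, obtaining a connected bounded open neighborhood $N_x$ of $x$ which also contains $p$. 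By compactness of $\cl_M(V_0)$, finitely many $N_{x_1}, \ldots, N_{x_r}$ cover $\cl_M(V_0)$; their union $U_0'$ is then a connected bounded open neighborhood of $\cl_M(V_0)$ (all the $N_{x_i}$ share $p$). I then regularize: set $U_0 = \int(\cl_M(U_0'))$. Since $U_0' \subseteq U_0 \subseteq \cl_M(U_0')$ and $U_0'$ is connected, so is $U_0$; moreover $U_0$ is a bounded regular open set containing $\cl_M(V_0)$.

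Finally, define $U_j = V_j \setminus \cl_M(U_0)$ for $j = 1, \ldots, m$, and verify that $\mathscr U = \{U_0, U_1, \ldots, U_m\} \in \mathscr K(M)$ with $\mathscr V \leq \mathscr U$ and the two asserted properties. The condition $U_0$ connected is true by construction. For $\partial U_j \subseteq \cl_M(U_0)$: any $x \in \partial U_j$ is a limit of $V_j$-points; if $x \in V_j$ then $x \notin U_j$ forces $x \in \cl_M(U_0)$, and if $x \notin V_j$ then $x \in \partial V_j \subseteq \cl_M(V_0) \subseteq U_0$. Membership $\mathscr U \in \mathscr K(M)$ reduces to checking the partition condition $M = U_0 \cup \bigcup_{j \geq 1} \cl_M(U_j)$: the only nontrivial case is $x \in \partial U_0$; such $x$ lies in $M \setminus \cl_M(V_0) = V_1 \cup \cdots \cup V_m$, so $x \in V_j$ for some $j$, and by regular openness of $U_0$, every neighborhood of $x$ meets $M \setminus \cl_M(U_0)$, hence a sufficiently small neighborhood in the open set $V_j$ meets $U_j = V_j \setminus \cl_M(U_0)$, so $x \in \cl_M(U_j)$. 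The refinement $\mathscr V \leq \mathscr U$ then follows from $V_0 \subseteq U_0$ and $U_j \subseteq V_j$.

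The principal obstacle I anticipate is achieving, simultaneously, that $U_0$ be connected and regular open: applying Lemma~\ref{lem:finite} alone produces a possibly disconnected union of a cover with ``trapped'' small components, while a naive connected open thickening of $\cl_M(V_0)$ need not be regular open (yet regular openness is forced by the partition axiom of $\mathscr K(M)$). The path-based thickening arranges all pieces to share a common base point, and the subsequent interior-of-closure regularization preserves connectedness by the standard fact that any set sandwiched between a connected set and its closure is connected.
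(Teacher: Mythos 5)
Your proposal is correct and follows essentially the same route as the paper's proof: both enlarge the distinguished set to a connected, bounded, regular open set containing the relevant compact data (using path-connectedness of the manifold to connect a finite cover, then taking the interior of the closure to regularize), and then trim the remaining pieces by subtracting the closure of this new $U_0$. The only cosmetic differences are that you first normalize so that the non-distinguished sets are the complementary components of $\cl_M(V_0)$ (making their boundaries land in $\cl_M(V_0)$ automatically), whereas the paper instead covers $\cl_M(U_0)\cup\bigcup_i\partial(U_i)$ directly before connecting the cover with paths.
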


\begin{proof}
Take $\mathscr U \in \mathscr K(M)$ and assume that $U_0$ either fails to be connected, or $\cl_M(U_0)$ fails to contain some $\partial(U_i)$. Since $\cl_M(U_0)\cup\left(\bigcup_{i=1}^n\partial(U_i)\right)$ is compact (and $M$ is locally compact and locally connected), there are finitely many connected bounded open sets $W_1,\ldots,W_m$ such that $\cl_M(U_0)\cup\left(\bigcup_{i=1}^n\partial(U_i)\right)\subseteq\bigcup_{i=1}^m W_i$. If $\bigcup_{i=1}^m W_i$ is connected, we let $V=\bigcup_{i=1}^m W_i$; otherwise we pick points $x_i\in W_i$ and use that $M$ is path connected to choose $\gamma_i \co [0,1] \rightarrow M$  such that \( \gamma_i(0) = x_i \) and \( \gamma_i(1) = x_{i+1} \) for \( 1 \leq i \leq m-1 \), set \( Y = \left(\bigcup_{i=1}^m \cl_M(W_i) \right) \cup \left( \bigcup_{i=1}^{n-1} \gamma_i \right) \), and find another finite collection of connected bounded open sets $W_{m+1},\ldots,W_k$ such that $Y\subseteq\bigcup_{i=m+1}^k W_i$; in this case, we let $V=\bigcup_{i=m+1}^k W_i$ and note that this set must be connected.

In either case, we have succeeded in obtaining a connected bounded open set $V$ such that $\cl_M(U_0)\cup\left(\bigcup_{i=1}^n\partial(U_i)\right)\subseteq V$. Letting $V_0$ be the interior of $\cl_M(V)$, we obtain a regular open set that is bounded, connected, and still contains $\cl_M(U_0)$ (since $V\subseteq V_0\subseteq\cl_M(V)$ and $V$ is connected); we now just need to let $V_i=U_i\smallsetminus\cl_M(V_0)$, for $1\leq i\leq n$, and define $\mathscr V=\{V_0,\ldots,V_n\}\in\mathscr K(M)$. It is readily checked that $\partial(V_i)\subseteq\cl_M(V_0)$ by construction; furthermore, $\mathscr U\leq\mathscr V$, and so the proof is finished.
\end{proof}

Recall from Definition~\ref{def:capturing} that, if $M$ is a manifold, we will say that a subspace $Y\subseteq M$ {\em captures} an end $e\in\mathcal E(M)$ if for every other end $e'\in\mathcal E(M)\ssm \{e\}$, there exists an unbounded open set $U$ with compact boundary such that $e\in\hat{U}$, $e'\notin\hat{U}$, and $\partial(U)\subseteq\int(Y)$. This notion will be of central importance for the remainder of this section.

\begin{Lem}
\label{lem:end-exhaustion}
Let \( M \) be a manifold.
If \( \ce(M) \) is second countable and every end of \( M \) is short, then there exists a sequence \( \{K_n\}_{n\in\bn} \) of connected compact subsets of \( M \), with $K_n\subseteq\int(K_{n+1})$, such that the open subspace $\bigcup_{n=1}^\infty K_n$ captures every end of $M$. Furthermore, if $\mathcal E(M)$ is countable, then $\bigcup_{n=1}^\infty K_n=M$.
\end{Lem}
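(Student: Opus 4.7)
My plan is to build $\{K_n\}$ recursively, absorbing at each stage the compact data needed (i) to separate any pair of distinct ends and (ii), in the countable case, to cover every point of $M$. For (i), since $\ce(M)$ is compact, Hausdorff, zero-dimensional, and second countable, I fix a countable basis $\{\hat{U}_n\}_{n\in\bn}$ of clopen subsets of $\ce(M)$, each $U_n \subseteq M$ being an unbounded open set with compact boundary $\partial U_n$. For (ii), when $\ce(M)$ is countable, I also fix for each $e \in \ce(M)$ a countable neighborhood basis $\{N_k^e\}_{k\in\bn}$ of $e$ in $\cf(M)$ by basic open sets (it exists because $e$ is short, hence $G_\delta$); the total family $\{N_k^e\}$ is countable, so I enumerate all of its finite subfamilies $\mathscr{C}_j$ that cover $\ce(M)$ and set $L_j = \cf(M) \ssm \bigcup \mathscr{C}_j$, a compact subset of $M$. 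By Hausdorffness of $\cf(M)$ together with compactness of $\ce(M)$, for every $x \in M$ some such finite cover misses $x$, so $\bigcup_j L_j = M$.

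I then construct $K_n$ by induction on $n$, maintaining the invariants that $K_n$ is compact and connected, $\partial U_n \subseteq \int(K_n)$, and, in the countable case, $L_n \subseteq \int(K_n)$. Given $K_n$, the set $C_{n+1} = K_n \cup \partial U_{n+1}$ (together with $L_{n+1}$ in the countable case) is compact; applying Lemma~\ref{lem:connected-cofinal} together with local compactness and local connectedness of $M$, I obtain a bounded connected open set $V_{n+1} \supseteq C_{n+1}$, and define $K_{n+1} = \cl_M(V_{n+1})$. Then $K_{n+1}$ is compact and connected (closure of a connected set), and $C_{n+1} \subseteq V_{n+1} \subseteq \int(K_{n+1})$, so in particular $K_n \subseteq \int(K_{n+1})$ as required.

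Finally, let $Y = \bigcup_n K_n$. The nesting $K_n \subseteq \int(K_{n+1})$ implies $Y$ is open in $M$, hence $\int(Y) = Y$. To verify $Y$ captures every end, take distinct ends $e \neq e'$ in $\ce(M)$; by Hausdorffness and zero-dimensionality of $\ce(M)$ combined with the basis property, there is some $\hat{U}_n$ with $e \in \hat{U}_n$ and $e' \notin \hat{U}_n$, and then $\partial U_n \subseteq \int(K_n) \subseteq \int(Y)$, so $U_n$ witnesses capture of $e$. In the countable case, $M = \bigcup_j L_j \subseteq \bigcup_n K_n = Y$, giving $Y = M$. The main obstacle I anticipate is the recursive step with simultaneous management of connectedness, nested interiors, and the prescribed inclusions, which is handled routinely via Lemma~\ref{lem:connected-cofinal}; the subtler point in the countable case is that individual short-end neighborhoods need not have compact complement in $M$, so compactness of $\ce(M)$ is essential to ensure that $\bigcup_j L_j = M$.
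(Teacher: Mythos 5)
Your proof is correct. The core construction coincides with the paper's: fix a countable basis \( \{\hat U_n\}_{n\in\bn} \) of \( \ce(M) \) given by unbounded open sets with compact boundary, and recursively build nested connected compacta whose interiors absorb the boundaries \( \partial(U_n) \) (the paper implements this step through the directed set \( \mathscr K(M) \) and Lemmas~\ref{lem:directed} and~\ref{lem:connected-cofinal}, but the content is the same). Where you genuinely diverge is the ``furthermore'' clause. The paper instead chooses the basis so that \( \bigcap_{e\in\hat V_n}\overline{V_n}=\{e\} \) for each (short) end, notes that \( M\ssm\bigcup_n V_n \) is then bounded and can be placed inside \( K_1 \), and asserts \( \bigcup_n K_n\supseteq\bigcup_n V_n \); verifying that last inclusion in detail requires essentially the compactness-of-\( \ce(M) \) argument that you make explicit. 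You instead manufacture a compact exhaustion \( \{L_j\} \) of \( M \) directly, as complements in \( \cf(M) \) of finite covers of the compact set \( \ce(M) \) by basic neighborhoods of the short ends, and feed \( L_n \) into \( K_n \); this is self-contained and arguably more transparent, and it correctly isolates the key point you flag at the end, namely that individual neighborhoods of short ends need not have compact complement, so compactness of \( \ce(M) \) is what makes the \( L_j \) exhaust \( M \). One cosmetic remark: Lemma~\ref{lem:connected-cofinal} as stated concerns elements of \( \mathscr K(M) \) rather than arbitrary compact sets, so strictly speaking you are invoking the construction inside its proof (every compact subset of a manifold lies in a bounded connected open set); this is harmless but worth making explicit.
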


\begin{proof}
Let \( \{ V_n \}_{n\in\bn} \) be a collection of unbounded open subsets of \( M \) with compact boundary such that  \( \{\hat{V_n}\}_{n\in\bn} \) is a basis for \( \ce(M) \). Furthermore, in the case where $\mathcal E(M)$ is countable, use the fact that there are only countably many ends, and each of them is short, to choose the $V_n$ in such a way that, for every $e\in\mathcal E(M)$, $\bigcap_{e\in\hat{V_n}}\overline{V_n}=\{e\}$. Note that, in this case, the set $Y=M\setminus\left(\bigcup_{n=1}^\infty V_n\right)$ must be bounded (or else, there would be an end $e\in\overline{Y}$, contradicting our choice of the $V_n$).

Now, whether $\mathcal E(M)$ is countable, or just second countable, in either case for each $n\in\mathbb N$ there exists an element $\mathscr U_n=\{U_0^n,\ldots,U_{k_n}^n\}\in\mathscr K(M)$ such that $U_1^n=V_n$; furthermore, by Lemma~\ref{lem:connected-cofinal}, we may assume without loss of generality that $\partial(U_1^n)\subseteq\cl_M(U_0^n)$ for all $n\in\mathbb N$. Start by defining $W_0^0=\varnothing$ and then, using Lemmas~\ref{lem:directed} and~\ref{lem:connected-cofinal}, we can inductively build $\mathscr W_n=\{W_0^n,\ldots,W_{m_n}^n\}\in\mathscr K(M)$ such that $\mathscr W_{n-1},\mathscr U_n\leq\mathscr W_n$ and such that \( W_0^n \) is connected. We then have that $\partial(U_1^n)\subseteq\cl_M(U_0^n)\cup\cl_M(W_0^{n-1})\subseteq W_0^n$; furthermore, in the case where $\mathcal E(M)$ is countable, we can ensure that $Y\subseteq W_0^1$. Note that the collection $\{\hat{W_i^n}\, | \, n \in \bn, 1\leq i \leq m_n\}$ forms a (countable) basis for $\mathcal E(M)$: for if $e\in E(M)$ and $e\in\hat{V_n}$, then there is some $i\leq m_n$ with $e\in\hat{W_i^n}\subseteq\hat{V_n}$.

Now we simply let $K_n=\cl_M(W_0^n)$. By construction $K_n\subseteq \int(K_{n+1})$, and this easily implies that $U=\bigcup_{n=1}^\infty K_n$ is an open subspace of $M$. Moreover, if $e,e'\in\mathcal E(M)$ with $e\neq e'$, then (as $\mathcal E(M)$ is Hausdorff and hence $T_1$) there are $n,i$ such that $e\in\hat{W_i^n}$ and $e'\notin\hat{W_i^n}$. Since $\partial(W_i^n)\subseteq\cl_M(W_0^n)=K_n\subseteq K_{n+1}\subseteq\int(U)$, it follows that $U$ captures the end $e$; since $e$ was arbitrary, this finishes the proof that $U$ captures every end of $M$. Furthermore, in the case where $\mathcal E(M)$ is countable, we actually have $\bigcup_{n=1}^\infty K_n\supseteq Y\cup\bigcup_{n=1}^\infty V_n=M$.
\end{proof}

\begin{Prop}
\label{prop:sep-omega}
Let \( M \) be a manifold.
If \( \ce(M) \) is second countable and every end of \( M \) is short, then either 
\begin{enumerate}[(i)]
\item \( M \) is separable, or
\item there exists an open Lindel\"of subset \( U \) of \( M \) such that \( M \ssm \cl_M(U) \) has uncountably many components (all of which are open) and \( U \) captures every end of \( M \).

\end{enumerate}
Moreover, if either \( \ce(M) \) is countable or \( M \) is a type I manifold, then \( M \) is Lindel\"of (and hence, metrizable).
\end{Prop}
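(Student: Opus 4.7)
The plan is to exhaust $M$ from its ends via Lemma~\ref{lem:end-exhaustion} and carry out a careful analysis of the leftover components. I would first apply that lemma to obtain a nested sequence $\{K_n\}_{n\in\bn}$ of connected compact subsets with $K_n\subseteq\int(K_{n+1})$, and set $U:=\bigcup_n K_n$; this $U$ is open, Lindel\"of (as a countable union of compact sets), and captures every end of $M$. When $\ce(M)$ is countable, the lemma additionally yields $U=M$, so $M$ is Lindel\"of, handling that branch of the ``moreover'' clause immediately.

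The key step is showing that every connected component $C_\alpha$ of $M\ssm\cl_M(U)$ (open by local connectedness of $M$) has compact closure in $M$. I would first observe that whenever $V\subseteq M$ is open with compact boundary $\partial V\subseteq U$, the hypothesis $C_\alpha\cap\cl_M(U)=\varnothing$ gives $C_\alpha\cap\partial V=\varnothing$, and connectedness of $C_\alpha$ then forces $C_\alpha\subseteq V$ or $C_\alpha\cap V=\varnothing$. If there existed some $e\in\overline{C_\alpha}\cap\ce(M)$, then short-ness of $e$ combined with the capturing property of $U$ would let me extract---using the countable basis $\{\hat{W_i^n}\}$ for $\ce(M)$ constructed in the proof of Lemma~\ref{lem:end-exhaustion}, whose members all have boundaries in $U$---a countable neighborhood basis $\{W_k\cup\hat{W_k}\}_{k\in\bn}$ of $e$ in $\cf(M)$ with $\partial W_k\subseteq U$. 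Each $W_k$ meets $C_\alpha$, so the dichotomy forces $C_\alpha\subseteq W_k$ for every $k$, and hence $C_\alpha\subseteq\bigcap_k W_k=\varnothing$, a contradiction. Thus $\cl_M(C_\alpha)=\overline{C_\alpha}$ is compact, and compact subsets of manifolds are metrizable (each is covered by finitely many Euclidean charts, hence second countable) and therefore separable.

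With this in hand I would split on the cardinality of the index set $A$: if $|A|$ is uncountable, then $U$ itself witnesses conclusion~(ii); otherwise $M=\cl_M(U)\cup\bigcup_\alpha C_\alpha$ is a countable union of separable subspaces ($\cl_M(U)$ is separable as the closure of the separable Lindel\"of set $U$, and each $\cl_M(C_\alpha)$ is compact metrizable), so $M$ itself is separable, giving conclusion~(i). For the type~I ``moreover'' clause, separability upgrades to Lindel\"ofness via a standard sup argument: any countable dense $D\subseteq M$ satisfies $D\subseteq M_\beta$ for $\beta=\sup_{d\in D}\beta(d)<\omega_1$ (using a canonical type~I sequence), whence $M=\cl_M(D)\subseteq M_{\beta+1}$ is Lindel\"of. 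It therefore suffices to rule out case~(ii) when $M$ is type~I.

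This final step is the main obstacle. My approach would be to exploit the canonical sequence $\{M_\beta\}_{\beta<\omega_1}$: compactness of each $\cl_M(C_\alpha)$ places it in some $M_{\beta(\alpha)}$, and Lindel\"ofness (hence countable cellularity) of each $M_\beta$ shows that only countably many $C_\alpha$ can fit in any fixed $M_\beta$, so under the contradictory assumption $|A|=\aleph_1$ the map $\alpha\mapsto\beta(\alpha)$ is cofinal in $\omega_1$. Choosing $x_\alpha\in C_\alpha$, the resulting $\omega_1$-sequence is discrete in $M\ssm\cl_M(U)$, and any accumulation point of it in $\cf(M)$ must lie in $\ce(M)$. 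A Fodor-style pressing-down argument---applied to the regressive-like invariant $\beta(\alpha)$ after restricting to a stationary subset---combined with the constraint that every such accumulation end has a countable neighborhood basis in $\cf(M)$ (by the short-end hypothesis) should then confine all but countably many of the $C_\alpha$'s into a single $M_\beta$, contradicting Lindel\"ofness of that $M_\beta$; making this argument precise is the delicate point I would have to work out carefully.
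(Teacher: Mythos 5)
There is a genuine gap at the heart of your argument: the claim that every component \(C_\alpha\) of \(M \ssm \cl_M(U)\) has compact closure is false, and the proposition cannot be proved along those lines. Your argument for it requires a \emph{countable neighborhood basis of \(e\) in \(\cf(M)\)} consisting of sets \(W_k \cup \hat W_k\) with \(\partial(W_k) \subseteq U\). Shortness of \(e\) gives a countable neighborhood basis in \(\cf(M)\), but with no control on where the boundaries lie; the sets \(W_i^n\) from the proof of Lemma~\ref{lem:end-exhaustion} have boundaries in \(U\), but \(\{\hat W_i^n\}\) is only a basis for the \emph{end space} \(\ce(M)\) --- the sets \(W_i^n \cup \hat W_i^n\) containing \(e\) need not shrink down to \(e\) inside \(\cf(M)\), so their intersection with \(M\) can be a nonempty unbounded set. (Concretely: if \(e\) is an isolated end, a single \(\hat W\) with \(\hat W = \{e\}\) already suffices for the basis of \(\ce(M)\), and then nothing forces \(U\) to swallow the unbounded region accumulating at \(e\).) Indeed the paper's own proof explicitly allows unbounded components \(Q_j\) of \(M \ssm \cl_M(U)\); what is true --- and what the paper proves using the capturing property --- is only that \(\overline{Q_j}\) contains \emph{at most one} end \(e_j\). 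Separability of \(Q_j\) is then recovered not from compactness but by using shortness of \(e_j\) to write \(Q_j = \bigcup_{n\in\bn}(Q_j \ssm V_n^j)\) as a countable union of bounded (hence separable) sets. This repair is exactly the step your proposal is missing in the countably-many-components case.

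The same false premise infects your treatment of the type~I ``moreover'' clause, and there the proposed repair (a Fodor-style pressing-down argument on the indices \(\beta(\alpha)\)) is both unnecessary and, as you concede, not worked out; it is unclear it can be made to work, since \(\alpha \mapsto \beta(\alpha)\) is not regressive with respect to any given stationary structure on the set of components. The paper's argument ruling out case~(ii) for type~I manifolds is much softer: choose \(\al\) with \(\cl_M(U) \subseteq M_\al\); every component of \(M \ssm \cl_M(U)\) is open and, by connectedness of \(M\), has nonempty frontier inside \(\cl_M(U) \subseteq M_\al\), hence meets the open set \(M_\al\); since \(M_\al\) is Lindel\"of (hence ccc), a pairwise-disjoint family of open sets each meeting \(M_\al\) is countable. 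Your overall skeleton (apply Lemma~\ref{lem:end-exhaustion}, dispose of the countable-end-space case, then split on the number of complementary components) does match the paper's, but the two steps above need to be replaced as indicated.
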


\begin{proof}
Let \( M \) be as in the proposition. If $M$ is compact  then it is separable, so assume that $M$ is non-compact. Let \( \{K_n\}_{n\in\bn} \) be the sequence of compact sets given by Lemma~\ref{lem:end-exhaustion} and let \( U = \bigcup_{n\in\bn} K_n \)---note that \( U \) is open, connected,  Lindel\"of, and captures every end of \( M \). Furthermore, if $\mathcal E(M)$ is countable, then $M=\bigcup_{n=1}^\infty K_n$ and so $M$ is Lindel\"of, which establishes the first half of the ``moreover'' part of the proposition.

In the general case, there are four possibilities:

(1) \( U = M \) implying \( M \) is Lindel\"of (and hence separable)

(2) \( \cl_M(U) = M \) implying  \( M \) is separable, 

(3) \( M \ssm \cl_M(U) \) has countably many components, or

(4) \( U \) is as in (ii) (note that the complementary components of a closed subset of a locally connected space are open).

As already noted, \( M \) is separable in the first two cases.
We claim that \( M \) is also separable in case (3):
assume \( M \ssm \cl_M(U) \) has countably many components and observe that the closure of each component of \( M \ssm \cl_M(U) \) in \( \cf(M) \) contains at most one end.
To see this, let \( Q \) be such a component and suppose that \( e \) and \( e' \) are ends in \( \overline{Q} \). Since \( U \) captures every end of $M$, there is an unbounded open set \( V \) with compact boundary $\partial(V)\subseteq U$ such that $e\in\hat{V}$ and $e'\notin\hat{V}$. 
Since \( \overline Q \) contains both \( e \) and \( e' \), it must be that \( Q \) intersects both \( V \) and \( M \ssm V \) and hence \( Q \) intersects \( \partial(V) \subset U \) (otherwise \( Q \) would be disconnected), which is impossible since \( Q \subset M \ssm \cl_M(U) \).

Let \( \{B_k\}_{k\in K} \) denote the bounded components of \( M \ssm \cl_M(U) \) and let \( \{Q_j\}_{j\in J} \) denote the unbounded components, where \( K \) and  \( J \) are both countable indexing sets.
By the above discussion, for each \( j \in J \), there exists \( e_j \in \ce(M) \) such that \( \{e_j\} = \overline{Q_j}\cap\mathcal E(M) \).
For each \( j \in J \), choose a sequence of unbounded open sets \( \{V_n^j\}_{n\in\bn} \) such that \( \partial (V_n^j) \) is compact and \( \{ e_j \} = \bigcap_{n\in\bn} \overline{V_n^j} \) (this can be done because \( e \) is short).
Since \( V_n^j \) is open and \( e \in \hat V_n^j \) for each \( n \in \bn \), we must have that \( Q_j \ssm V_n^j \) is bounded.
Note that every bounded set in a manifold is separable\footnote{Every bounded subset of a manifold can be covered with a bounded open subset; any bounded open subset of a manifold is itself a Lindel\"of manifold and hence it is metrizable and second countable. This means that every bounded subset of a manifold is second countable, and therefore also separable.} and so each of the $B_k$, as well as the $Q_j\setminus V_n^j$, are separable. Therefore, we can write \( M \) as a countable union of separable sets,
\[
M = \cl_M(U) \cup \left(\bigcup_{k\in K} B_k \right) \cup \left(\bigcup_{j\in J}\bigcup_{n\in\bn} (Q_j\ssm V_n^j) \right),
\]
and hence \( M \) is separable. This finishes the proof of the main statement of the theorem.

Now, for the second half of the ``moreover'' part, let us assume that \( M \) is of type I and let \( \{M_\al : \al < \omega_1\} \) be a canonical sequence for \( M \).
If \( M \) falls into cases (1)--(3), then it is separable; in this case, it is not difficult to see that this implies there exists an ordinal \( \be < \omega_1 \) such that \( M_\al = M \) for all \( \al \geq \be \).
Hence, \( M \) is Lindel\"of.
We finish by noting that case (4) is impossible:
As \( U \) is Lindel\"of, there exists an ordinal \( \al < \omega_1 \) such that \( \cl_M(U) \subset M_\al \).  
Again as \( M \) is locally connected, the components of \( M \ssm \cl_M(U) \) are open and, in addition, as \( M \) is connected, each component of \( M \ssm \cl_M(U) \) intersects \( M_\al \).
Therefore, the components of \( M \ssm \cl_M(U) \) together with \( U \) give an open cover of \( M_\al \).
It now follows that \( M\ssm \cl_M(U) \) has countably many components as \( M_\al \) is Lindel\"of.
\end{proof}

Theorem~\ref{thm:characterize} is now just a special case of Proposition~\ref{prop:sep-omega}.
To see this, recall---as noted in the beginning of the section---that a manifold is Lindel\"of if and only if it is metrizable.

%----------------------------------------------
% The general bagpipe theorem
%-----------------------------------------------

\section{The general bagpipe theorem}
\label{sec:bagpipe}
In this section, we prove our main theorems.
In each subsection, we recall the theorem of Nyikos we aim to generalize and restate several of his results in terms of the space of ends and the language introduced thus far.

\subsection{The Bagpipe Lemma}
The first theorem of Nyikos we consider holds for manifolds of any dimension.
Before stating Nyikos's result, we recall two definitions from~\cite{NyikosTheory}:
a locally connected space \( M \) is \emph{trunklike} if, given any closed Lindel\"of subset \( C \), \( M\ssm C \) has at most one non-Lindel\"of component.
A topological space is \emph{\( \omega \)-bounded} if every countable subset has compact closure.

Nyikos proves that an \( \omega \)-bounded manifold is the union of a compact metrizable space and finitely many trunklike manifolds:

\begin{Thm}[``The Bagpipe Lemma" {\cite[Theorem 5.9]{NyikosTheory}}]
Every \( \omega \)-bounded manifold \( M \) has an open Lindel\"of subset U such that \( M \ssm \cl_M(U) \) is the disjoint union of finitely many trunklike manifolds. \qed
\end{Thm}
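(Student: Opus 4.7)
My plan is to use \( \omega \)-boundedness to force \( \ce(M) \) to be finite, then use the theory of \( \mathscr K(M) \) to partition \( M \) into a bag and finitely many tentacles, and finally verify that each tentacle is trunklike.

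I would first show that \( \ce(M) \) is finite. Every end is long: for any countable \( S \subseteq M \) the closure \( \cl_M(S) \) is compact, hence closed in \( \cf(M) \) and disjoint from \( \ce(M) \), so no end is an accumulation point of a countable subset of \( \cf(M) \). If \( \ce(M) \) were infinite, pick countably many distinct ends \( \{e_k\} \); by compactness of \( \ce(M) \) this set has an accumulation point \( e \in \ce(M) \). Using density of \( M \) in \( \cf(M) \), choose \( x_k \in M \) from a small neighborhood of each \( e_k \); then \( e \in \cl_{\cf(M)}(\{x_k\}) \), contradicting that \( \omega \)-boundedness makes \( \cl_M(\{x_k\}) \) compact and hence equal to the \( \cf(M) \)-closure and disjoint from \( \ce(M) \).

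Writing \( \ce(M) = \{e_1, \ldots, e_n\} \), I would then combine Lemma~\ref{lem:finite}, Lemma~\ref{lem:connected-cofinal}, and the cofinality remarks after Lemma~\ref{lem:finite} to produce \( \{U_0, V_1, \ldots, V_n\} \in \mathscr K(M) \) in which \( U_0 \) is bounded, connected, and regular open; each \( V_j \) is a connected unbounded component of \( M \ssm \cl_M(U_0) \); and \( \hat{V_j} \cap \ce(M) = \{e_j\} \). Setting \( U := U_0 \) gives an open set with compact (hence Lindel\"of) closure, and \( M \ssm \cl_M(U) = V_1 \sqcup \cdots \sqcup V_n \) is the desired disjoint union of open connected pieces.

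The main obstacle is showing each \( V_j \) is trunklike. Let \( C \) be a closed Lindel\"of subset of \( V_j \). Local compactness plus the Lindel\"of property put \( C \) inside a \( \sigma \)-compact open subset of \( V_j \); since every compact subset of a manifold is second countable and hence separable, this set admits a countable dense subset, and \( \omega \)-boundedness forces \( \cl_M(C) \) to be compact. Suppose for contradiction that \( V_j \ssm C \) has two non-Lindel\"of components \( A \) and \( B \). Both are open in \( M \), and neither is bounded (bounded open subsets of a manifold are metrizable and hence Lindel\"of); therefore \( \overline A \) and \( \overline B \) (closures in \( \cf(M) \)) each meet \( \ce(M) \). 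Since the only end in \( \hat{V_j} \) is \( e_j \), both \( A \) and \( B \) accumulate at \( e_j \). Now apply Lemma~\ref{lem:finite} to the compact set \( \cl_M(U_0) \cup \cl_M(C) \) to produce \( \{U'_0, V'_1, \ldots, V'_k\} \in \mathscr K(M) \) with \( \cl_M(U_0) \cup \cl_M(C) \subseteq U'_0 \). The component \( V' \) whose hat contains \( e_j \) is connected and satisfies \( V' \subseteq V_j \) and \( V' \cap C = \varnothing \), so \( V' \) is a connected subset of \( V_j \ssm C \) and lies in a single component, say \( V' \subseteq A \). But \( V' \cup \widehat{V'} \) is an open neighborhood of \( e_j \) in \( \cf(M) \) and \( e_j \in \overline B \), forcing \( V' \cap B \neq \varnothing \), contradicting \( A \cap B = \varnothing \). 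This is the crux of the proof; the rest is bookkeeping with the machinery already developed in the paper.
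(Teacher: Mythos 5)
The paper itself gives no proof of this statement---it is quoted directly from Nyikos \cite[Theorem 5.9]{NyikosTheory} (hence the \( \square \) with no argument)---and in fact the paper runs the logic in the opposite direction from yours: Theorem~\ref{lem:finite-long} \emph{uses} the Bagpipe Lemma to deduce that an \( \omega \)-bounded manifold has finitely many ends, all long. Your plan of first establishing finiteness of \( \ce(M) \) and then reading off the decomposition is therefore a genuinely different route, and its last two stages are sound: given \( \ce(M)=\{e_1,\ldots,e_n\} \), the construction of \( \{U_0,V_1,\ldots,V_n\} \) with \( \hat{V_j}=\{e_j\} \) is correct, and so is the trunklike verification (the connected component \( V' \) of \( M\ssm\cl_M(U_0') \) with \( e_j\in\hat{V'} \) lies in \( V_j\ssm C \) yet must meet both \( A \) and \( B \), since \( e_j\in\overline{A}\cap\overline{B} \)).

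The gap is in the finiteness step, and it is fatal as written. From ``\( e \) is an accumulation point of \( \{e_k\} \)'' and ``\( x_k \) lies in a small neighborhood of \( e_k \)'' you conclude \( e\in\cl_{\cf(M)}(\{x_k\}) \), but this does not follow: a neighborhood \( W \) of \( e \) contains infinitely many \( e_k \), yet nothing forces the single point \( x_k \) to land in \( W \) rather than in the part of its chosen neighborhood of \( e_k \) lying outside \( W \). Worse, under the standing hypothesis the conclusion is false for \emph{every} choice of the \( x_k \): as you yourself note in the same sentence, \( \omega \)-boundedness makes \( \cl_M(\{x_k\}) \) compact, hence equal to \( \cl_{\cf(M)}(\{x_k\}) \) and disjoint from \( \ce(M) \). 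So no contradiction is actually produced, and the argument does not exclude, say, an \( \omega \)-bounded manifold whose end space is a convergent sequence of long ends \( e_k\to e \), each approachable only ``along \( \omega_1 \).'' (Your opening claim that every end is long has the same flaw: a countable subset of \( \cf(M) \) may consist of ends, which is exactly why the paper's proof of Theorem~\ref{lem:finite-long} first establishes finiteness.) The step can be repaired without circularity: if \( \ce(M) \) were infinite, one could produce infinitely many pairwise disjoint unbounded open sets \( U_k \) with compact boundaries; taking a countable dense subset \( D_k \) of each compact (hence separable) set \( \partial(U_k) \), \( \omega \)-boundedness makes \( K=\cl_M\bigl(\bigcup_k D_k\bigr)\supseteq\bigcup_k\partial(U_k) \) compact, so by Lemma~\ref{lem:finite} it lies in a bounded open \( W \) with \( M\ssm\cl_M(W) \) having finitely many components; but each \( U_k\ssm\cl_M(W) \) is nonempty and clopen in \( M\ssm\cl_M(W) \), forcing infinitely many pairwise disjoint such components---a contradiction. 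With that replacement your proof goes through.
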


It is worth noting that a trunklike manifold need not have a long end: for example, the space \( \bl^+\times \br \) is trunklike without a long end.

Our first result regarding the analysis of non-metrizable manifolds in terms of their ends is yet another characterization of $\omega$-bounded manifolds, so there is a sense in which the following directly extends Nyikos's original result.

\begin{Thm}
\label{lem:finite-long}
A  manifold $M$ is \( \omega \)-bounded if and only if each end of $M$ is long. In this case, moreover, the end space $\mathcal E(M)$ is finite.
\end{Thm}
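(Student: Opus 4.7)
The plan is to prove the biconditional by handling each direction separately, with the forward direction requiring more machinery.

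For the ($\Leftarrow$) direction: assuming every end of $M$ is long, I take a countable $A \subseteq M$ and consider the closure $\overline A$ of $A$ in the compact space $\cf(M)$. Since $M$ is open in $\cf(M)$, $\overline A \cap M = \cl_M(A)$. If $\cl_M(A)$ were not compact, it would fail to be closed in $\cf(M)$, producing some end $e \in \overline A \ssm A$. That end would then be an accumulation point of the countable set $A \subseteq \cf(M)$, contradicting $e$ being a weak $P$-point. So every countable subset of $M$ has compact closure, i.e., $M$ is $\omega$-bounded.

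For the ($\Rightarrow$) direction I will invoke Nyikos's Bagpipe Lemma (restated just above). Given $\omega$-bounded $M$, the lemma produces an open Lindel\"of $U \subseteq M$ with $M \ssm \cl_M(U) = T_1 \sqcup \cdots \sqcup T_n$, each $T_j$ open and trunklike. Since Lindel\"of open sets in manifolds are second countable, hence separable, $\omega$-boundedness forces a countable dense subset of $U$ (and therefore $\cl_M(U)$ itself) to have compact closure; so $\cl_M(U)$ is compact, each $\partial T_j \subseteq \cl_M(U)$ is compact, each $\hat{T_j}$ is a clopen subset of $\ce(M)$, and separating each end from the compactum $\cl_M(U)$ yields the partition $\ce(M) = \bigsqcup_j \hat{T_j}$.

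The heart of the argument is to show $|\hat{T_j}| \leq 1$ for each $j$. Suppose $e \neq e'$ both lie in $\hat{T_j}$. Hausdorffness of $\cf(M)$ together with Lemma~\ref{lem:finite} and Lemma~\ref{lem:directed} yields $\mathscr V = \{V_0, V_1, \ldots, V_l\} \in \mathscr K(M)$ with $\cl_M(U) \subseteq V_0$, with $V_1, \ldots, V_l$ the connected components of $M \ssm \cl_M(V_0)$, and with $e \in \hat{V_a}$, $e' \in \hat{V_b}$ for distinct $a, b$. Because each such $V_i$ is connected and contained in $M \ssm \cl_M(U) = \bigsqcup_k T_k$, and because $\hat{V_a}, \hat{V_b}$ each meet $\hat{T_j}$ (which is disjoint from $\hat{T_k}$ for $k \neq j$), both $V_a$ and $V_b$ lie inside $T_j$ and are unbounded in $M$. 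Now I apply the trunklike property of $T_j$ to the closed Lindel\"of set $C = \cl_M(V_0) \cap T_j$ (compact since $\cl_M(V_0)$ is second countable): the space $T_j \ssm C = T_j \ssm \cl_M(V_0)$ has at most one non-Lindel\"of component. The decisive upgrade is that in the $\omega$-bounded $M$ an open subset is Lindel\"of if and only if it has compact closure, so `non-Lindel\"of' for these components coincides with `unbounded'; having two unbounded components $V_a, V_b$ thus violates trunklikeness and forces $|\hat{T_j}| \leq 1$.

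Consequently $\ce(M)$ is finite, hence discrete, so no end is an accumulation point of any subset of $\ce(M)$. Combined with the backward-direction argument which, in the $\omega$-bounded setting, rules out any end being an accumulation point of a countable subset of $M$, every end of $M$ is long. The hardest step is the uniqueness claim $|\hat{T_j}| \leq 1$, which exploits the interplay between trunklikeness and $\omega$-boundedness via the equivalence of `Lindel\"of' and `bounded' for open subsets in the $\omega$-bounded setting.
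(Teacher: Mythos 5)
Your proof is correct and takes essentially the same route as the paper: the backward direction via closures of countable sets in \( \cf(M) \) and the weak \( P \)-point definition, and the forward direction via the Bagpipe Lemma, compactness of \( \cl_M(U) \), the partition \( \ce(M)=\bigsqcup_j \hat T_j \), and the trunklike property forcing \( |\hat T_j|\leq 1 \), followed by the finite-end-space argument to upgrade every end to a long end. The only cosmetic difference is that you make explicit the equivalence of ``non-Lindel\"of'' and ``unbounded'' for open subsets in the \( \omega \)-bounded setting, which the paper leaves implicit.
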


\begin{proof}
First assume that \( M \) is \( \omega \)-bounded. If \( M \) is compact, then it has no ends and hence it is vacuously true that each end is long; hence, we assume that \( M \) is non-compact.
We first argue that no end of \( M \) is short: if \( e \) were a short end of \( M \), then we could choose a countable neighborhood basis \( \{W_n\}_{n\in\bn} \) of $e$ in \( \cf(M) \).
Then, pick a point \( x_n \in W_n \cap M \) so that \( A = \{x_n : n \in \bn\} \) is a countable subset of \( M \) whose closure in \( M \) fails to be compact (as it misses $e$), which contradicts \( M \) being \( \omega \)-bounded.

Now, let \( U \) be the Lindel\"of subset given by the bagpipe lemma.
As \( M \) is \( \omega \)-bounded and \( U \) is separable, we must have that \( \cl_M(U) \) is compact. 
If \( T_1, \ldots, T_n \) are the components of \( M \ssm \cl_M(U) \), then \( T_i \) is trunklike for each \( i \in \{1, \ldots, n \} \).
Note that \( \hat T_i \) is open in \( \ce(M) \) and  \( \ce(M) = \bigsqcup_{i=1}^n \hat T_i \).

We claim that \( \ce(M) \) has cardinality \( n \).
First observe that, since each \( T_i \) is unbounded, \( \hat T_i \) is nonempty.
This implies that \( \ce(M) \) has at least \( n \) points. 
Now suppose that some \( \hat T_i \) contains at least two ends $e,e'$, $e\neq e'$.
Since \( T_i \cup \hat T_i \) is an open neighborhood of both \( e \) and \( e' \) in \( \cf(M) \), we can find an element $\mathscr V=\{V_0,V_1,V_2,\ldots,V_m\}\in\mathscr K(X)$ such that $V_1,V_2\subseteq T_i$ are disjoint and $e\in\hat{V_1}\setminus\hat{V_2}$, $e'\in\hat{V_2}\setminus\hat{V_1}$. But then \( \cl_M(V_0)\cap T_i \) is a closed Lindel\"off subset of \( T_i \) and \( T_i \ssm \cl_M(V_0) \) has (at least) two non-Lindel\"of components, $V_1$ and $V_2$.
This contradicts \( T_i \) being trunklike; hence, \( \ce(M) \) has cardinality \( n \).

We now have that \( \ce(M) \) is finite (hence discrete). Since we are assuming that $M$ is $\omega$-bounded, the closure of any countable subset in \( M \) is compact. If $e\in\mathcal E(M)$ was not long, there would be a countable set $A\subseteq\mathcal F(M)$ with $e\in\cl_{\mathcal F(M)}(A)\setminus A$. 
But $\mathcal E(M)$ is finite, thus $Y=A\cap M=A\setminus\mathcal E(M)$ is a cofinite subset of $A$ and therefore $Y$ and $A$ have the same accumulation points in $\mathcal F(M)$. Hence we must have $e\in\overline{Y}$, however $Y$ is countable and hence its closure in $M$ is compact, which means that $\overline{Y}\subseteq M$ and this is a contradiction. This means that each of the finitely many ends of $M$ is long. 

For the converse, if each end of $M$ is long, then the closure of every countable subset of \( M \) in \( \cf(M) \) is disjoint from the ends.
Hence (since $\cl_M(A)=\cl_{\mathcal F(M)}(A)$), the closure of \( A \) in \( M \) is compact whenever $A$ is countable; therefore \( M \) is \( \omega \)-bounded (and {\it a fortiori}, by the forward direction, $\mathcal E(M)$ is finite). 
\end{proof}

An end of a manifold is \emph{isolated} if it is an isolated point of the space of ends.
As every end of an \( \omega \)-bounded manifold is isolated and long by Theorem~\ref{lem:finite-long}, we are led to the following definition:

\begin{Def}
\label{def:trunk}
A \emph{long trunk} is a type I trunklike manifold with the EDP and finitely many ends, exactly one of which is long.
\end{Def}

Combining Theorem~\ref{lem:finite-long} and Definition~\ref{def:trunk}, we can give a strengthened version of the Bagpipe Lemma as follows:

\begin{Thm}[``The Bagpipe Lemma: Promoting Trunk-Like to Long Trunks"]
\label{thm:bag-lemma}
Every \( \omega \)-bounded manifold \( M \) has an open Lindel\"of subset U such that \( X \ssm \cl_M(U) \) is the disjoint union of finitely many long trunks.
\end{Thm}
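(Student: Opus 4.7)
My plan is to derive this strengthening of Nyikos's Bagpipe Lemma from the original version by a careful choice of $U$ ensuring the trunklike components meet the stronger requirements of a long trunk. First I apply the original Bagpipe Lemma together with Lemma~\ref{lem:finite} to obtain an open Lindel\"of $U_0 \subseteq M$ such that $\cl_M(U_0)$ is compact (using $\omega$-boundedness plus separability of $U_0$) and $M \setminus \cl_M(U_0)$ has finitely many connected unbounded components $T_1, \ldots, T_n$, each trunklike with compact frontier $\partial T_i \subseteq \cl_M(U_0)$. By Theorem~\ref{lem:finite-long}, $\ce(M) = \{e_1, \ldots, e_n\}$ is finite with every $e_i$ long, and from the argument in that proof we may relabel so that $\hat T_i = \{e_i\}$.

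I now verify each $T_i$ is a long trunk. For \emph{exactly one long end}: the end of $T_i$ induced by the direction toward $e_i$ remains a weak $P$-point in $\cf(T_i)$, since any countable accumulation there would give one at $e_i$ in $\cf(M)$; trunklikeness then precludes additional long ends. For \emph{finitely many ends} and \emph{EDP}: since $\partial T_i$ is compact and $M$ is locally connected, Remark~\ref{rem:finite} furnishes a bounded open $W \subseteq M$ with $\partial T_i \subseteq W$ and $W \cap T_i$ having finitely many components; shrinking $W$ within a countable basis of the metrizable compactum $\cl_M(W)$ equips each boundary-type end of $T_i$ with a countable neighborhood basis, so all such ends are short, and trunklikeness together with the uniqueness of $e_i$ in $\hat T_i$ bounds the total number of ends.

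The main obstacle is \emph{type I}. I would construct a canonical sequence $\langle M_\alpha \rangle_{\alpha < \omega_1}$ of open Lindel\"of subspaces of $T_i$ by transfinite recursion: choose $M_0$ to be a bounded open set containing a suitable neighborhood of $\partial T_i$ in $T_i$; at successor stages, use local compactness and Lemma~\ref{lem:finite} (applied to a compact set slightly fattening $\cl_{T_i}(M_\alpha)$) to take $M_{\alpha+1}$ an open Lindel\"of set with $\cl_{T_i}(M_\alpha) \subseteq M_{\alpha+1}$; at limit stages, take unions. The longness of $e_i$ forces the construction not to stabilize before stage $\omega_1$, while the trunklike property guarantees the extension can always be carried out (at most one non-Lindel\"of component remains to absorb at each step). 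The delicate point is that $T_i$ is not itself $\omega$-bounded---sequences in $T_i$ may accumulate at $\partial T_i \subset M \setminus T_i$---so the exhaustion must be arranged so that boundary-accumulation behavior is entirely absorbed into $M_0$, leaving the $\omega_1$ successor stages free to develop the long direction toward $e_i$. If any leftover pieces of $T_i$ obstruct this picture, I would enlarge $U_0$ by adjoining a finite collection of bounded open neighborhoods of the problematic short-end regions, producing the final $U$ while preserving its Lindel\"ofness and the disjoint-long-trunk decomposition of the complement.
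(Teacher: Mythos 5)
Your overall strategy---start from Nyikos's Bagpipe Lemma and upgrade each trunklike piece to a long trunk using Theorem~\ref{lem:finite-long}---is exactly the route the paper intends (the paper states the result as an immediate combination of the original Bagpipe Lemma, Theorem~\ref{lem:finite-long}, and Definition~\ref{def:trunk}, with the details appearing in the proof of Theorem~\ref{thm:gen-bag-lemma}). However, two of your verifications have real problems. The most serious is the ``finitely many ends, all frontier ends short'' step. Remark~\ref{rem:finite} gives you an open \(W\) with \(\partial T_i\subseteq W\) such that \(\cl_M(W)\) has finitely many \emph{complementary} components in \(M\); it says nothing about the number of components of \(W\cap T_i\), which is what you would need, and for a wild frontier \(\partial T_i\) this set can have infinitely many components. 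The paper avoids this issue entirely by first invoking Proposition~\ref{prop:submanifold} (equivalently Lemma~\ref{lem:connected}) to shrink each \(T_i\) so that \(\cl_M(T_i)\) is a bordered manifold with \emph{compact bi-collared} boundary, absorbing the (bounded, hence Lindel\"of) difference into \(U\); only then does the count ``one long end plus one short end per boundary component'' become legitimate, with the collar providing the countable neighborhood basis for each boundary end. Without that normalization your argument for finitely many ends and for EDP does not go through as written.

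The second issue is your treatment of type I. The transfinite recursion you sketch produces an increasing \(\omega_1\)-sequence of open Lindel\"of sets, but nothing in the construction guarantees that \(\bigcup_{\alpha<\omega_1}M_\alpha=T_i\), which is part of the definition; non-stabilization is not exhaustion. This is also far more work than needed: \(M\) is \(\omega\)-bounded, hence of type I (this is \cite[Theorem 4.10]{GauldNonmetrisable}, which the paper cites for exactly this purpose), and intersecting a canonical sequence for \(M\) with the open subspace \(T_i\) yields a canonical sequence for \(T_i\). (Alternatively, \(\cl_M(T_i)\) is a closed subset of an \(\omega\)-bounded space, hence \(\omega\)-bounded, hence of type I as a bordered manifold once the bi-collaring above is in place.) Your arguments that the end corresponding to \(e_i\) is long in \(\cf(T_i)\) and that trunklikeness forbids a second long end are fine.
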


The main theorem of this subsection is a generalized version of Theorem~\ref{thm:bag-lemma} and its converse; in order to state it, we first recall several standard definitions.

\begin{Def}\hfill
\begin{enumerate}
\item A {\bf bordered \( n \)-manifold} is a  connected Hausdorff topological space in which every point has an open neighborhood homeomorphic to an open subset of the closed upper half space \( \bar{\mathbb{H}}^n = \{ (x_1, \ldots, x_n) \in \br^n : x_n \geq 0 \} \).
\item If $M$ is a bordered $n$-manifold and $x\in M$, we say that
\begin{enumerate}
\item $x$ is a {\bf manifold point} of $M$ if it has an open neighborhood homeomorphic to $\mathbb R^n$, and
\item $x$ is a {\bf boundary point} of $M$ if it has an open neighborhood homeomorphic to $\bar{\mathbb{H}}^n$.
\end{enumerate}
\item A subset \( B \) of a (bordered) manifold \( M \) is {\bf collared} if there exists an embedding \( \iota\co B\times [0,1) \to M \) such that \( \iota(b,0) = b \) for every \( b \in B \); \( B \) is {\bf bi-collared} if there exists an embedding \( h \co B \times (-1,1) \to M \) such that \( h(b,0) = b \) for every \( b \in B \). 

\end{enumerate}
\end{Def}

Note that every $n$-manifold is also a bordered $n$-manifold (one with an empty set of boundary points), and that the set of manifold points of a bordered $n$-manifold is an $n$-manifold.
We are now ready to state the main theorem of this subsection.

\begin{Thm}[The General Bagpipe Lemma]
\label{thm:gen-bag-lemma}
Let \( M \) be a manifold. 
\begin{enumerate}
\item
If \( M \) satisfies the EDP and either (i) \( M \) is of type I and has second countable end space or (ii) \( M \) has countable end space, then there exists an open  Lindel\"of subset \( U \) of \( M \) such that \( \cl_M(U) \) is a bordered manifold with compact boundary components, each of which is bi-collared, and \( M \ssm \cl_M(U) \) is the disjoint union of countably many long trunks.
\item
If there exists an open  Lindel\"of subset \( U \) of \( M \) such that \( M \ssm \cl_M(U) \) is the disjoint union of countably many long trunks, then \( M \) satisfies the EDP, has second countable end space, and is of type I.
\end{enumerate}
\end{Thm}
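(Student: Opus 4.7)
For the forward direction of part (1), the plan is to use the EDP and countability hypotheses to show that the long ends are isolated points of $\ce(M)$, then to peel off a long trunk around each such end and apply Theorem~\ref{thm:characterize} to the complement. Concretely, every long end $e$ admits a countable neighborhood basis $\{W_n\}$ in $\ce(M)$ under either hypothesis (i) or (ii), and if $e$ were not isolated one could pick $x_n\in W_n\ssm\{e\}$ to obtain a countable subset of $\cf(M)$ with $e$ as an accumulation point, contradicting the weak $P$-point property. Thus the set $\{e_i\}_{i\in I}$ of long ends is at most countable and discrete in $\ce(M)$. For each $i$, use isolation of $e_i$ together with Lemma~\ref{lem:finite} and standard topological collaring of codimension-one closed submanifolds to choose an unbounded connected open $V_i\subseteq M$ with $\hat V_i=\{e_i\}$ and $\partial V_i$ a compact bi-collared submanifold of $M$; shrinking if needed, arrange that the $V_i$ have pairwise disjoint closures. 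Set $U=M\ssm\bigcup_i \cl_M(V_i)$, so $\cl_M(U)$ is a bordered manifold with bi-collared compact boundary $\bigsqcup_i\partial V_i$ and $M\ssm\cl_M(U)=\bigsqcup_i V_i$.

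To verify $U$ is Lindel\"of, transfer a type I structure to $U$ from a canonical sequence $\{M_\al\}$ of $M$ (the sequence $\{U\cap M_\al\}$ works, since $\cl_M(M_\al)\subseteq M_{\al+1}$ implies $\cl_U(U\cap M_\al)\subseteq U\cap M_{\al+1}$), and observe that every end of $U$ is short: the long ends of $M$ have been absorbed into the $V_i$, while the new ``boundary'' ends arising from sequences approaching the compact $\partial V_i$ from the $U$-side admit countable neighborhood bases via the bi-collar coordinates. Together with second countability (respectively countability) of $\ce(U)$, Theorem~\ref{thm:characterize} then forces $U$ to be Lindel\"of.

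The main obstacle is verifying that each $V_i$ is a long trunk, in particular that it is trunklike. Given a closed Lindel\"of $C\subseteq V_i$, one must show $V_i\ssm C$ has at most one non-Lindel\"of component. The approach is to use the type I structure $\{V_i\cap M_\al\}_{\al<\omega_1}$ of $V_i$: since $C$ is Lindel\"of it sits inside some $V_i\cap M_\al$, and any non-Lindel\"of component of $V_i\ssm C$ must persist beyond every $M_\be$ with $\be\geq\al$; because every end of $V_i$ other than the unique long end $e_i$ is short and its neighborhoods in $V_i$ are Lindel\"of (hence absorbable into $C$ by enlarging $C$ to $C'$ with the same components), a club/pressing-down argument on the canonical sequence of $V_i$ forces all persistent tails to merge into the one approaching $e_i$, leaving at most one non-Lindel\"of component. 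That $V_i$ has EDP and finitely many ends follows from the structure of $\partial V_i$: its finitely many compact connected components each produce one short end of $V_i$, which together with $e_i$ exhaust $\ce(V_i)$; EDP and type I are inherited from $M$.

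For the backward direction, given the decomposition, $\cf(U)$ is second countable (since $U$ is Lindel\"of hence metrizable, by Proposition~\ref{prop:ends}), so all ends of $U$ are short; meanwhile each long trunk $V_i$ contributes only its unique long end $e_i$ to $\ce(M)$ (its short ends get filled in by the bi-collar gluing to $\cl_M(U)$ and do not appear in $\ce(M)$). Hence $\ce(M)\cong\ce(U)\sqcup\{e_i\}_{i\in I}$ is second countable and $M$ satisfies EDP. To produce a type I exhaustion of $M$, use the bi-collars of the $\partial V_i$ to choose an open Lindel\"of $W\subseteq M$ containing $\cl_M(U)$, then combine with canonical sequences $\{V_{i,\al}\}$ of the trunks by setting $M_\al=W\cup\bigcup_{i\in I} V_{i,\al}$; each $M_\al$ is a countable union of open Lindel\"of subsets and hence Lindel\"of, and $\cl_M(M_\al)\subseteq M_{\al+1}$ follows from the analogous inclusions $\cl_{V_i}(V_{i,\al})\subseteq V_{i,\al+1}$ together with the fact that boundary accumulation of the $V_{i,\al}$ occurs in the bi-collar absorbed by $W$.
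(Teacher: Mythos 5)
Your overall architecture---isolate the long ends, excise a trunk neighborhood of each, and apply the metrizability criterion to what remains---is the same as the paper's, but three steps are genuinely incomplete or wrong. First, the application of Theorem~\ref{thm:characterize} to \( U \) requires knowing that \( \ce(U) \) is second countable (resp.\ countable) and that \( U \) is connected (the criterion is stated for manifolds), and you assert the former without proof. This is not automatic: \( \ce(U) \) is related to \( \ce(M) \) only through the quotient map \( \vp_U \co \ce(U) \to \ce(M) \) of Corollary~\ref{cor:nice-subspace}, which goes in the wrong direction for transferring a basis, and one must first show that \( \vp_U \) is finite-to-one with non-singleton fibers only over the (isolated) long ends before a countable basis for \( \ce(U) \) can be assembled. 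This is where the paper spends most of its effort: the connecting paths, the submanifolds \( V'_{\ell_k} \), and the sets \( Z_{\ell_k} \) exist precisely to make \( U \) connected and adequate so that the relative Freudenthal machinery (Lemmas~\ref{lem:relative-projection}, \ref{lem:capturing}, \ref{lem:surjection-onto-relative}) applies; with your bare \( U = M \ssm \bigcup_i \cl_M(V_i) \) none of this is available. Second, your ``club/pressing-down'' verification that \( V_i \) is trunklike does not work as stated: when \( C \) is merely Lindel\"of rather than compact, the components of \( V_i \ssm C \) are not governed by the ends of \( V_i \), and enlarging \( C \) by Lindel\"of neighborhoods of the short ends can split a non-Lindel\"of component into many pieces whose union you cannot control. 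The route the paper sets up is cleaner: \( \cl_M(V_i) \) is one-ended with a long end, hence \( \omega \)-bounded by the argument of Theorem~\ref{lem:finite-long}; then every closed Lindel\"of subset is compact, and the complement of a compact set in a one-ended manifold has exactly one unbounded component.

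Third, in part (2) your claim that \( \ce(M) \cong \ce(U) \sqcup \{e_i\}_{i\in I} \) is false: a long trunk may have short ends that are genuine ends of \( M \) (for instance a punctured long pipe arising as a complementary component), so the trunks contribute more than their long ends; moreover part (2) does not hypothesize bi-collared boundaries, so there is no ``filling in'' available, and your type~I construction for \( M \) invokes these nonexistent bi-collars as well. The paper instead shrinks each trunk \( T_i \) to a neighborhood \( V_i \) of its unique long end, uses the trunklike property to see that the remaining complementary components of \( K_i \) in \( T_i \) are Lindel\"of, shows \( N = M \ssm \bigcup_i \cl_M(V_i) \) is Lindel\"of and captures every end, and then reads off second countability of \( \ce(M) \) as a quotient of \( \ce(N) \) and the EDP from the fibers of \( \pi_N \). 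These gaps are all repairable, but they constitute the substance of the proof rather than routine details.
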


Note that the original bagpipe lemma does not assume the manifold to be of type I, but every \( \omega \)-bounded manifold is of type I. It follows immediately from Theorem~\ref{thm:gen-bag-lemma} that every manifold with a countable end space and the EDP must be of type I.

Before getting to the proof, we need three preliminary results.

\begin{Lem}
\label{lem:isolated}
If the end space of a manifold is second countable, then every long end is isolated.  
\end{Lem}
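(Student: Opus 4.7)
The plan is to argue by contraposition: if a long end $e$ fails to be isolated in $\mathcal{E}(M)$, I will produce a countable subset of $\mathcal{F}(M)$ that accumulates at $e$, contradicting the weak $P$-point property that defines longness.

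First I would use the hypothesis that $\mathcal{E}(M)$ is second countable. Since $\mathcal{E}(M)$ is a closed subspace of the compact Hausdorff space $\mathcal{F}(M)$ (by Proposition~\ref{prop:ends}), second countability gives a countable open neighborhood basis at every point of $\mathcal{E}(M)$. So, assuming $e$ is a long end that is not isolated in $\mathcal{E}(M)$, I can fix a countable decreasing neighborhood basis $\{V_n\}_{n\in\mathbb{N}}$ of $e$ in $\mathcal{E}(M)$; non-isolation lets me choose, for each $n$, a point $e_n \in V_n \setminus \{e\}$.

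The countable set $A = \{e_n : n \in \mathbb{N}\} \subseteq \mathcal{E}(M) \subseteq \mathcal{F}(M)$ then accumulates at $e$: any open neighborhood $W$ of $e$ in $\mathcal{F}(M)$ meets $\mathcal{E}(M)$ in an open neighborhood of $e$ there, hence contains some $V_n$ and therefore $e_n$; moreover, shrinking along the basis shows $W$ contains infinitely many $e_n$. This contradicts the definition of a long end as a weak $P$-point of $\mathcal{F}(M)$, concluding the argument.

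I do not expect any real obstacle here; the only mild point to be careful about is distinguishing ``accumulation point'' from ``limit point of a sequence,'' which is why I produce the $e_n$ from a nested basis rather than invoking sequential convergence directly — this avoids any unnecessary appeal to metrizability of $\mathcal{E}(M)$ (though in fact compact Hausdorff plus second countable would give metrizability by Urysohn, the more direct basis argument keeps the proof entirely elementary).
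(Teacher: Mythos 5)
Your proof is correct and is essentially the paper's own argument: the paper likewise notes that second countability gives first countability of $\ce(M)$, so a non-isolated end lies in the closure of a countable subset of $\ce(M)\subseteq\cf(M)$, contradicting the weak $P$-point definition of a long end. Your write-up just makes explicit the choice of points $e_n$ from a nested neighborhood basis, which the paper leaves implicit.
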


\begin{proof}
Under the assumptions, the end space of the manifold $M$ is first countable; hence, every $e\in\mathcal E(M)$ is either isolated, or it belongs to the closure of a countable set in $\mathcal E(M)\subseteq \cf(M)$. Therefore, by the definition of a long end, we conclude that every long end $e\in\mathcal E(M)$ must be isolated.
\end{proof}

We now record a useful corollary stemming from significant results in manifold theory establishing 
the fact that every non-compact second-countable manifold admits a handlebody decomposition\footnote{In fact, this holds for every second-countable manifold that is not an unsmoothable 4-manifold.} (this has a long history, but we point the reader to \cite[Theorem 9.2 and Theorem 8.2]{FreedmanTopology}).
Without concerning ourselves with the definition of a handlebody, we record a straightforward corollary, Proposition~\ref{prop:submanifold} below.

\begin{Prop}
\label{prop:submanifold}
Let \( M \) be a non-compact manifold.
If \( K \) is a compact subset of \( M \), then there exists a compact bordered manifold \( N \)  contained in \( M \) such that \( K \) is contained in the interior of \( N \) and \( \partial (N) \) is bi-collared in \( M \). 
\end{Prop}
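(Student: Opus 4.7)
The plan is to reduce to a second-countable open submanifold of $M$ containing $K$ and then invoke the cited handle decomposition theorem directly.

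First, I would construct a connected, bounded, open neighborhood $W$ of $K$ in $M$. Using that $K$ is compact and $M$ is locally compact and locally Euclidean, cover $K$ by finitely many open charts $V_1, \ldots, V_k$, each homeomorphic to $\br^n$ and each with $\cl_M(V_i)$ compact. The union $\bigcup_i V_i$ may fail to be connected, but this can be remedied by exactly the argument in the proof of Lemma~\ref{lem:connected-cofinal}: use the path-connectedness of $M$ to join the $V_i$ by finitely many compact arcs and cover the resulting set by finitely many further bounded connected open charts, producing a single bounded connected open set $W$ containing $K$. Then $\cl_M(W)$ is compact and covered by finitely many Euclidean charts, hence second countable, so $W$ is itself a second-countable (Lindel\"of, metrizable) open submanifold of $M$. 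Moreover, since $\cl_M(W)$ is compact and $M$ is not, $W$ is a proper open subset of $M$; being a nonempty non-closed open subset of the connected space $M$, the submanifold $W$ is itself non-compact.

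Second, I would invoke the cited handle decomposition theorem on the non-compact second-countable manifold $W$. This furnishes an exhaustion of $W$ by an increasing sequence $H_1 \subseteq \int(H_2) \subseteq H_2 \subseteq \int(H_3) \subseteq \cdots$ of compact bordered submanifolds whose boundaries $\partial H_m$ are compact and bi-collared in $W$. Since $K$ is compact and $K \subseteq W = \bigcup_m H_m$, there is some $m$ with $K \subseteq H_m$, and therefore $K \subseteq \int(H_{m+1})$. Setting $N = H_{m+1}$ gives a compact bordered submanifold of $M$ containing $K$ in its interior, with $\partial N$ compact. Since $W$ is open in $M$, any bi-collar embedding $\partial N \times (-1,1) \hookrightarrow W$ is automatically a bi-collar embedding into $M$, so $\partial N$ is bi-collared in $M$ as claimed.

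The substantive content is concentrated entirely in the handle decomposition result cited by the authors (Freedman et al.), which subsumes the usual 4-dimensional caveat. The only delicate bookkeeping in the plan is producing a connected precompact open neighborhood of $K$ that is nevertheless non-compact, so that the handle decomposition theorem applies, and this is routine using the connectedness and non-compactness of $M$. Consequently I do not anticipate any genuine obstacle beyond correctly assembling these ingredients.
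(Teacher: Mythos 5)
Your argument is correct in outline, and the reduction to the second-countable case is essentially the same as ours (we simply observe that every compact set lies in an open connected Lindel\"of subset; your chart-and-arc construction is a hands-on version of that, and your observation that \( W \) can be taken non-compact --- so that the handle-decomposition theorem applies even to unsmoothable 4-manifolds --- is a good catch). Where you genuinely diverge is in how the bi-collar is produced. You lean entirely on the handle-decomposition machinery: you assert that the finite stages \( H_m \) of the decomposition form an exhaustion with \( H_m \subseteq \int(H_{m+1}) \) and have boundaries that are already bi-collared in \( W \). That is true, but it is exactly the part of the handle-decomposition package that takes work to extract in the topological category, and it is the only place where your proof carries any risk. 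The paper instead takes a more elementary route that avoids opening that box: we quote \cite[Proposition 3.17]{GauldNonmetrisable} only for the existence of a compact bordered submanifold \( N' \) with \( K \subseteq \int(N') \) (no bi-collaring claimed), then invoke Brown's theorem \cite{BrownLocally} that the boundary of a bordered manifold is collared, choose a collar \( \iota \co \partial(N') \times [0,1) \to N' \) disjoint from \( K \), and set \( N = N' \ssm \iota\bigl(\partial(N') \times [0,1/2)\bigr) \). The new boundary \( \iota\bigl(\partial(N')\times\{1/2\}\bigr) \) sits inside the open set \( \iota\bigl(\partial(N')\times(0,1)\bigr) \) with an evident product neighborhood, so it is bi-collared for free. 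Your approach buys a shorter writeup at the cost of trusting more of the handle-decomposition theorem; ours buys self-containedness (only the existence of \emph{some} compact bordered neighborhood plus Brown's collaring theorem) at the cost of the push-inward step. If you keep your route, you should at least cite explicitly that the stages of a topological handle decomposition have bi-collared boundaries, or better, graft in the collar-push trick, which upgrades \emph{any} compact bordered neighborhood of \( K \) to one with bi-collared boundary.
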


\begin{proof}
We first show there exists a compact bordered submanifold \( N' \) of \( M \) containing in \( K \) in its interior.
For a short proof when \( M \) is second countable, we refer the reader to \cite[Proposition 3.17]{GauldNonmetrisable}.
For a general manifold, simply observe that every compact subset is contained in an open connected Lindel\"of subset; we can then apply the second-countable case to this subset.
Now, the boundary of a bordered manifold is collared \cite[Theorem~2]{BrownLocally}; hence, \( \partial (N') \) is collared in \( N' \) (and hence in \( M \)).
Moreover, since \( K \) is closed and disjoint from \( \partial (N') \), we can choose an embedding \( \iota \co \partial (N') \times [0,1) \to N' \) defining a collar of \( \partial (N') \) such that the collar is disjoint from \( K \).
Now, we conclude the proof by setting \( N \) to be the complement of \( \iota( N' \times [0, 1/2)) \) in \( N' \).
\end{proof}

Note that since \( \partial (N) \) is bi-collared, the closure of the complement of \( N \) is also a submanifold.
This fact will implicitly be used repeatedly in the following arguments. 

\begin{Lem}
\label{lem:connected}
If \( e \) is an isolated end of a manifold \( M \), there is a connected open subset \( U \) of \( M \) such that \( \hat U =  \{e\} \),  \( \cl_M(U) \) is a bordered manifold with bi-collared boundary, and \( \partial (U) \) is compact. 
\end{Lem}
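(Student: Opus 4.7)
The plan is to localize near the isolated end $e$, smooth the boundary of a neighborhood of $e$ using Proposition~\ref{prop:submanifold}, and extract $U$ as an appropriately chosen component of the smoothed region.

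\textbf{Step 1: Reducing to a connected $V$ with $\hat V=\{e\}$.} Since $e$ is isolated in $\ce(M)$, there is an open unbounded set $V\subseteq M$ with compact boundary such that $\hat V=\{e\}$. Applying Lemma~\ref{lem:finite} to the compact set $\partial(V)$ gives an open set $V_0$ with $\partial(V)\subseteq V_0$, $\cl_M(V_0)$ compact, and only finitely many complementary components in $M$. Since $\partial(V)\subseteq V_0$, the components of $V\ssm\cl_M(V_0)$ are exactly the complementary components of $\cl_M(V_0)$ that lie in $V$, so there are finitely many. A standard refinement in $\mathscr K(M)$ gives the disjoint decomposition $\hat V = \bigsqcup_i\hat{V_i}$, where $V_i$ ranges over these components. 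Since every unbounded open set with compact boundary has nonempty end set (its closure in $\cf(M)$ is compact while its closure in $M$ is not), exactly one $V_i$ is unbounded. Replace $V$ by this component: we now have $V$ connected and open, with compact boundary and $\hat V=\{e\}$.

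\textbf{Step 2: Thickening $\partial(V)$ and choosing $U$.} Apply Proposition~\ref{prop:submanifold} to $\partial(V)$ to obtain a compact bordered submanifold $N\subseteq M$ with $\partial(V)\subset\int(N)$ and $\partial(N)$ bi-collared in $M$. Because $\partial(V)\subset\int(N)$, the open set $M\ssm N$ splits as the disjoint union of two open pieces $V\ssm N$ and $(M\ssm\cl_M(V))\ssm N$; consequently, every component of $V\ssm N$ is also a component of $M\ssm N$. The set $V\ssm N$ is open, unbounded (as $N$ is compact), has boundary contained in $\partial(N)$, and satisfies $\widehat{V\ssm N}=\{e\}$. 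Let $U$ be its unique unbounded component, obtained by repeating the argument of Step 1. Then $U$ is a connected open set with $\hat U=\{e\}$, $\partial(U)\subseteq\partial(N)$ is compact, and $U$ is a component of $M\ssm N$.

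\textbf{Step 3: Promoting the bi-collar to $\partial(U)$ (the main obstacle).} The delicate point is to transfer the bi-collar of $\partial(N)$ to $\partial(U)$, showing simultaneously that $\partial(U)$ is a submanifold and that $\cl_M(U)$ is a bordered manifold. Fix a bi-collar $h\co\partial(N)\times(-1,1)\to M$ of $\partial(N)$, oriented so that $h(\partial(N)\times(0,1))\subseteq M\ssm N$ and $h(\partial(N)\times(-1,0))\subseteq\int(N)$. For each connected component $C$ of $\partial(N)$, the connected set $h(C\times(0,1))$ lies in $M\ssm N$ and is therefore contained in a single component of $M\ssm N$; by continuity, $C$ lies in the topological boundary of that component. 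It follows that the boundary of every component of $M\ssm N$ is a union of connected components of $\partial(N)$; in particular $\partial(U)$ is. Restricting $h$ to $\partial(U)\times(-1,1)$ then yields a bi-collar of $\partial(U)$ in $M$ whose positive side lands in $U$ and whose negative side lands in $\int(N)\subseteq M\ssm\cl_M(U)$. This simultaneously exhibits $\cl_M(U)=U\cup\partial(U)$ as a bordered manifold with bi-collared boundary $\partial(U)$, completing the construction.
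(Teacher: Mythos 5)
Your proof is correct and follows essentially the same route as the paper: shrink to a connected neighborhood of the isolated end, thicken its boundary into a compact bordered submanifold $N$ with bi-collared boundary via Proposition~\ref{prop:submanifold}, and take the unique unbounded component of the complement. Your Step 3 merely makes explicit the bi-collar-restriction details that the paper leaves to the reader.
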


\begin{proof}
As \( e \) is isolated, there is an element $\mathscr U=\{U_0,\ldots,U_n\}\in\mathscr K(X)$ such that $\hat U_1 = \{e\}$. 
We may assume without loss of generality that $U_1$ is connected (if it is not, split it into the connected component containing $e$ and the rest to get a larger element of $\mathscr K(X)$). 
Since \( \partial (U_1) \) is compact,  Proposition~\ref{prop:submanifold} yields a compact bordered submanifold \( N \) of \( M \) with bi-collared boundary containing \( \partial(U_1) \) in its interior. 
To finish, let \( U \) be the unbounded component of \( U_1 \cap (M \ssm N) \). 
\end{proof}

\begin{proof}[Proof of Theorem~\ref{thm:gen-bag-lemma}]
We first prove (1):
Let $M$ be a manifold with the EDP.
Additionally, we assume that either \( M \) is a manifold of type I with second countable end space or \( M \) is an arbitrary manifold with countable end space.
If \( M \) has no long ends, then---by Proposition~\ref{prop:sep-omega}---\( M \) is Lindel\"of and we set \( U = M \); so, we will also assume that \( M \) has at least one long end. 
By Lemma~\ref{lem:isolated}, each long end of \( M \) is isolated; hence, by Lemma~\ref{lem:connected}, for each long end \( \ell \) of \( M \) we can choose an unbounded open set $U_\ell$ such that $\hat{U_\ell}=\{\ell\}$ and $\cl_M(U_\ell)$ is a compact bordered manifold with bi-collared boundary. Moreover, since $\mathcal E(M)$ is second countable, it has only countably many isolated points and thus there are only countably many such $\ell$. This allows us to inductively choose the $U_\ell$ in such a way that for any two long ends \( \ell_1 \) and \( \ell_2 \), the intersection of \( U_{\ell_1} \) and \( U_{\ell_2} \) is empty.

We claim that \( U_\ell \) is a long trunk.
Note that \( \cl_M(U_\ell) \) is Hausdorff and locally compact; hence, \( \cl_M(U_\ell) \) has a well-defined Freudenthal compactification.
Moreover, by construction, it is not difficult to see that \( \cl_M(U_\ell) \) has a single end, which is long and corresponds to \( \ell \).
Arguing as in Theorem~\ref{lem:finite-long}, we see that \( \cl_M(U_\ell) \) is \( \omega \)-bounded; then, applying \cite[Theorem 4.10]{GauldNonmetrisable}---or rather a slight modification to bordered manifolds---\( \cl_M(U_\ell) \) is of type I.
By intersecting a canonical sequence for \( \cl_M(U_\ell) \) with \( U_\ell \), we see that \( U_\ell \) is of type I as well.
Now if \( \cl_M(U_\ell) \) has \( n \) boundary components, then it follows that \( U_\ell \) is \( (n+1) \)-ended, with one end corresponding to \( \ell \) and the others (located where the components of $\partial(U_\ell)$ should be) short; hence, \( U_\ell \) is a long trunk.

Let $\ell_k$ ($k\in I$, with $I$ countable) enumerate all the long ends. For each $k\in I$, let \( \partial_1^k, \ldots, \partial_{m_k}^k \) denote the components of \( \partial(U_{\ell_k}) \) and choose paths \( \gamma_i^k \) in \( U_{\ell_k} \) connecting \( \partial_i^k \) and \( \partial_{i+1}^k \) for \( 1 \leq i < m_k \). 

Let \( \Gamma_1=\partial(U_{\ell_1}) \cup \gamma_1^1 \cup \cdots \cup \gamma_{m_1-1}^1 \) and apply Proposition~\ref{prop:submanifold} to obtain an open set \( V'_{\ell_1} \) of \( M \) such that \( \cl_M(V'_{\ell_1}) \) is a compact bordered submanifold of \( M \) with bi-collared boundary and \( \Gamma_1 \subseteq V'_{\ell_1} \).
Moreover, we can assume that \( V'_{\ell_1} \) is connected by taking the connected component of \( V'_{\ell_1} \) containing the connected set \( \Gamma_1\).
Note that \( M \ssm V'_{\ell_1} \) has finitely many connected components (since \( \partial(V'_{\ell_1}) \) has finitely many components); hence, by replacing \( V'_{\ell_1} \) with the union of \( V'_{\ell_1} \) with its bounded complementary components, we may also assume that each component of \( M \ssm V'_{\ell_1} \) is unbounded.

Now, proceeding recursively, assuming we have already defined \( \Gamma_{k} \) and \( V'_{\ell_k} \), let $\gamma_k$ be a path connecting $\Gamma_k$ with \( \partial(U_{\ell_{k+1}}) \), and let \( \Gamma_{k+1}= V'_{\ell_k}\cup\gamma_k \cup \partial(U_{\ell_{k+1}}) \cup \gamma_1^{k+1} \cup \cdots \cup \gamma_{m_{k+1}-1}^{k+1} \). 
As in the base case, apply Proposition~\ref{prop:submanifold} to obtain an open set \( V'_{\ell_{k+1}} \) of \( M \) such that \( \cl_M(V'_{\ell_{k+1}}) \) is a compact bordered submanifold of \( M \) with bi-collared boundary and \( \Gamma_{k+1} \subset V'_{\ell_{k+1}} \); again, we may assume that \( V_{\ell_{k+1}} \) is connected and that each of its complementary components is unbounded. 

Notice that there is a distinguished component \( Z_{\ell_k} \)  of \( M \ssm V'_{\ell_k} \) yielding a neighborhood of \( \ell_k \) and, in particular, \( \partial (Z_{\ell_k}) \subset U_{\ell_k} \) (this follows from the fact that the complement of  \( \partial (V'_{\ell_k}) \) is disconnected and that \( \partial (V'_{\ell_k}) \) is disjoint from \( \Gamma_k \)).
Now, for \( k \in I \),  define \( V_{\ell_k} = V'_{\ell_k}\ssm \left( \cl_M(Z_{\ell_1}) \cup \cdots \cup \cl_M(Z_{\ell_k})\right) \) and observe that \( V_{\ell_k} \) is connected, \( \cl_M(V_{\ell_k}) \) is a bordered submanifold of \( M \) with bi-collared boundary, and \( \partial (Z_{\ell_j}) \) is a boundary component of \( \cl_M(V_{\ell_k}) \) for each \( j \leq k \). 
Finally, let \( V = \bigcup_{k\in I} V_{\ell_k} \) and let \( Z = \bigcup_{k \in I} \partial(Z_{\ell_k}) \).
It follows that \( V \) is an open connected Lindel\"of subset of \( M \), \( Z \) is bi-collared in \( M \), and \( V \cup Z \) is a bordered submanifold whose set of boundary points (as a bordered manifold) is \( Z \). 

Let \( U = \left(M \ssm \bigcup_{\ell \in \mathcal{L}(M)} U_\ell\right) \cup V \).
Then, \( U \) is an open, connected subset of \( M \) such that \( \cl_M(U) = U \cup Z \) is a bordered manifold with bi-collared boundary in \( M \) and compact boundary components (observe that the complement in $M$ of $U\cup Z$ is open, as it is the union of the $Z_\ell$). Moreover, each component of \( M \ssm U \) is unbounded (such components are the $\cl_M(Z_\ell)$). Furthermore, one can argue that $U$ is an adequate subspace of $M$. For, whenever $F\subseteq U$ is a closed set and $B$ is a bounded (in $M$) connected component of $M\setminus F$, we consider the set $B'=\cl_M(B)\setminus F$ and note that $B\subseteq B'\subseteq\cl_M(B)$, so $B'$ must be connected. If $B'$ were to intersect $M\setminus U$, then $B'$ would need to intersect one of the unbounded components of $M\setminus U$, contradicting our assumption about $B'$. Hence $B'$ and so also $\cl_M(B)$, is a subset of $U$ and so $B$ is bounded in $U$. Thus, whenever $W$ is a bounded open set with $\cl_M(W)\subseteq U$, every bounded (in $M$) component of $M\setminus\cl_M(W)$ is also bounded in $U$, and therefore we can always enlarge $W$ so that every component of $M\setminus\cl_M(W)$ is unbounded in $M$, showing that $U$ is an adequate subspace of $M$. With the properties from the previous paragraph, Lemma~\ref{lem:relative-projection} yields continuous surjections \( \pi_U \co \cf(M) \to \cf_M(U) \) and \( p_U \co \ce(M) \to \ce_M(U) \).

It remains to prove that \( U \) is Lindel\"of. First observe that, by construction, \( U \) is an open, connected subset of \( M \). Furthermore, let us argue that \( U \) captures every end of \( M \): 
by construction, \( U \) captures every long end of \( M \).  Now, suppose \( e \) is a short end and \( e' \) is any other end of \( M \).
Let \( W \) be an unbounded open set with compact boundary such that \( e \in \hat W \) and \( e' \notin \hat W \). 
By the compactness of \( \partial(W) \), there are at most finitely many long ends \( \ell \) such that \( \partial(W) \cap U_{\ell} \neq \varnothing \), label them \( \ell_1, \ldots, \ell_k \). 
Replacing \( W \) by
\[ W' = W \setminus \left(\bigcup_{i=1}^k \cl_M(U_{\ell_i}) \right), \]
we have that \( \partial(W') \subset U \) with \( e \in \hat W' \) and \( e' \notin \hat W' \).
Now, we have shown that \( U \) captures every end of \( M \).

Applying Lemma~\ref{lem:capturing}, we have that \( p_U \co \ce(M) \to \ce_M(U) \) is a homeomorphism.
Let \( P_U \co \ce(U) \to \ce_M(U) \) be the map given by Lemma~\ref{lem:surjection-onto-relative}, then, from Corollary~\ref{cor:nice-subspace}, we have that \( \vp_U = p_U^{-1} \circ P_U \co \ce(U) \to \ce(M) \) is a quotient map. 
By the construction of \( U \)---and, in particular, the fact \( Z_\ell \) has finitely many boundary components---the map \( \vp_U \) is finite-to-one.
Moreover, if the pre-image of an end of \( M \) under the map \( \vp_U \) has more than one point, then the end must be long and hence isolated.
In particular, since \( \ce(U) \) is Hausdorff, any end of \( U \) that is contained in the pre-image of a long end of \( M \) must be isolated in \( \ce(U) \).
Let \( \{e_j\}_{j\in J} \) be some indexing of the ends of \( U \) whose image under \( \vp_U \) is a long end of \( M \) by a countable set \( J \); additionally, let \( \{A_n\}_{n\in\bn} \) be a countable basis for \( \ce(M) \).
Then, \( \{ \{e_j\} \}_{j\in J} \cup \{ \vp_U^{-1}[A_n] \}_{n\in\bn} \) is a countable basis for \( \ce(U) \); hence, \( \ce(U) \) is second countable.

Let us next argue that every end of \( U \) is short.
First, let us consider the case where \( e \in \ce(U) \) and  \( \vp_U(e) \) is short. 
In this case, the map \( \pi_U \co \cf(M) \to \cf_M(U) \) From Lemma~\ref{lem:relative-projection} is open and hence \( P_U(e) \) is a \( G_\delta \) point of \( \cf_M(U) \) since \( \vp_U(e) \) is a \( G_\delta \) point of \( \cf(M) \).
In particular, \( \{e\} = \Pi_U^{-1}[\{P_U(e)\}] \) and hence, by the continuity of \( \Pi_U \), \( e \) is also a \( G_\delta \) point.

Now, let us consider the case when \( e \in \ce(U) \) and \( \ell = \vp_U(e) \) is long. 
Fix a net \( \{x_\lambda\}_{\lambda\in \Lambda} \) in \( U \) converging to \( e \) in \( \cf(U) \).  
Then there exists a unique boundary component, call it \( \partial_e \), of the bordered manifold \( \cl_M(U) \)---and of \( V_\ell \)---containing the accumulation points of \( \{x_\lambda\}_{\lambda\in\Lambda} \) in \( M \): this follows from the fact that the boundary components of \( \cl_M(U) \) are compact subsets of \( M \) and disjoint compact subsets can always be separated by open sets in a Hausdorff space. 
Since \( \partial_e \) is compact, we can choose open subsets \( \{A_n\}_{n\in\bn} \) of \( M \) such that \( \partial_e = \bigcap_{n\in\bn} A_n \).
The collection \( \{A_n \cap U\}_{n\in\bn} \) gives rise to a countable neighborhood basis of \( e \) in \( \ce(U) \) and hence \( e \) is a short end of \( U \).

We have established that \( \ce(U) \) is second countable and every end of \( U \) is short. Furthermore, notice that if $\mathcal E(M)$ is countable, then so is $\mathcal E(U)$ (as the former is a quotient of the latter, where all but countably many fibers of the quotient map are singletons, and the rest are finite sets). On the other hand, if $M$ is of type I then so is $U$ (if $\langle M_\alpha\big|\alpha<\omega_1\rangle$ is a canonical sequence for $M$, then $\langle M_\alpha\cap U\big|\alpha<\omega_1\rangle$ is a canonical sequence for $U$). Hence, either $U$ is of type I or $\mathcal E(U)$ is countable, and so we can apply Proposition~\ref{prop:sep-omega} to conclude that \( U \) is Lindel\"of.
This establishes (1).

For (2), let \( U \) be the given Lindel\"of subset so that \( M \ssm \cl_M(U) = \bigsqcup_{i \in I} T_i \), where \( T_i \) is a long trunk and \( I \) indexes the components of \( M \ssm \cl_M(U) \) (by assumption, \( I \) is countable). If \( I = \varnothing \), then \( M \) is Lindel\"of---hence of type I---and it follows from Proposition~\ref{prop:ends} that the end space of \( M \) is second countable and every end is short.  We will now assume that \( I \) is non-empty. Using Lemma~\ref{lem:connected}, for each \( i \in I \), we can find a compact bordered manifold \( K_i \) of \( T_i \) such that \( T_i\ssm K_i \) has a component \( V_i \) with \( \hat V_i \) a singleton consisting of the unique long end of \( T_i \).
By definition, all components of \( T_i \ssm K_i \) other than \( V \) must be Lindel\"of.

It now follows that \( N= M \ssm \left( \bigcup_{i \in I} \cl_M(V_i) \right) \) is a Lindel\"of set (as it is the union of \( U \) together with countably many Lindel\"of subsets). Note that \( N \) is open and captures every end of \( M \). 
In particular, by Corollary~\ref{cor:nice-subspace}, \( \ce(M) \) is homeomorphic to a quotient of \( \ce(N) \) and hence is second countable.

We will now argue that $M$ is of type I. Since each $T_i$ is a long trunk, and in particular of type I, we may take canonical sequences $\langle M_\alpha^i\big|\alpha<\omega_1\rangle$ for $T_i$. 
We then define $M_\alpha=N\cup\left(\bigcup_{i\in I}M_\alpha^i\right)$  for $\alpha<\omega_1$ to obtain the canonical sequence $\langle M_\alpha\big|\alpha<\omega_1\rangle$ witnessing that $M$ is of type I.

It is left to show that \( M \) has the EDP.
Let \( \pi_N \co \cf(M) \to \cf_M(N) \) be the continuous surjection given by Lemma~\ref{lem:relative-projection}.
Now, as every point of \( \cf(N) \) is a \( G_\delta \) point, every point of \( \cf_M(N) \) is also a \( G_\delta \) point.
The pre-image of a \( G_\delta \) subset is again a \( G_\delta \) subset; hence, if \( e \) is an end of \( M \) such that \( \{ e \} = \pi_N^{-1}[\{\pi_N(e)\}] \), then \( e \) is short.

Now, let \( e \) be an end of \( M \) such that \( \pi_N^{-1}[\{\pi_N(e)\}] \neq \{e\} \).
In this case, we must have that \( \pi_N^{-1}[\{\pi_N(e)\}] = \overline{ V}_i \) for some \( i \in I \). 
There is a unique end in \( \overline{V}_i \) and it is long; hence, \( e \) is long.
We can conclude that \( M \) has the EDP.
\end{proof}

\subsection{The Bagpipe Theorem}

We now recall Nyikos's ``Bagpipe Theorem" \cite[Theorem 5.14]{NyikosTheory}.
Before doing so, we need to introduce the notion of a bagpipe.

\begin{Def}
\label{def:pipe}
A topological space \( D \) is a \textbf{long plane} if it can be written as a union \( \bigcup_{\al < \omega_1} D_\al \) of open subspaces \( D_\al \) satisfying:

\begin{itemize}
\item  \( D_\al \) is homeomorphic to \( \br^2 \),
\item \( \cl_D( D_\al) \subset D_\be \), and
\item the boundary of \( D_\al \) in \( D_\be \) is homeomorphic to \( \bS^1 \) (the unit circle)
\end{itemize}
whenever \( \al < \be < \omega_1 \).
A space \( A \) is a \textbf{bordered long pipe} if it can be realized as a long plane with the interior of a copy of a closed disk removed, or equivalently, \( A \) is a bordered long pipe if it can be written as a union \( \bigcup_{\al<\omega_1} A_\al \) of open subspaces \( A_\al \) satisfying:
\begin{itemize}
\item  \( A_\al \) is homeomorphic to \( \bS^1 \times [0, \infty) \) (where  \( [0,\infty) \subset \br \)),
\item \( \cl_A( A_\al) \subset A_\be \), and
\item the boundary of \( A_\al \) in \( A_\be \) is homeomorphic to \( \bS^1 \)
\end{itemize}
whenever $\al<\be<\omega_1$. 
A space \( P \) is a \textbf{long pipe} if it can be realized as the manifold points of a bordered long pipe, or equivalently, if it can be realized as a long plane with a point deleted.
\end{Def}

The definition of long pipe given is equivalent to the definition given by Nyikos \cite[Definition~5.2]{NyikosTheory}.
Our bordered long pipe is what Gauld refers to as a long pipe in \cite[Definition 4.11]{GauldNonmetrisable} and hence Gauld's long pipe is not a manifold. 
Note that a long pipe is two-ended with one short and one long end. 
%Nyikos does not have this restriction and, in his definition, our double long pipe (with 2 long ends) would be a long pipe.

\begin{Ex}
Following the notation in Example~\ref{ex:long-line},  \( \bS^1 \times \mathbb L^{\geq 0} \) is a bordered long pipe, \( \bS^1 \times \mathbb L^+ \) is a long pipe, and \( \bS^1 \times \mathbb L \) is neither.
Additionally, note that \( \mathbb L \times \mathbb L \) is a long plane while \( \mathbb L^+\times \mathbb L^+ \) and \( \mathbb L \times \mathbb L^+ \) are not. 
In fact, the long planes are exactly the \( \omega \)-bounded simply-connected 2-manifolds (see \cite[Lemma 6.1]{NyikosTheory}), or, equivalently, using Theorem~\ref{lem:finite-long}, the long planes are exactly the simply-connected 2-manifolds whose unique end is long.
Unfortunately, these simple examples hide the complexity of the situation:  there are exactly \( 2^{\aleph_1} \) many pairwise non-homeomorphic long planes (see Gauld \cite[Theorem 4.19]{GauldNonmetrisable}).
\end{Ex}

Following Nyikos's terminology, we will use the term \emph{surface} to refer to a 2-dimensional bordered manifold.
It therefore makes sense to discuss compact subsurfaces of a surface. 
It is worth noting that every boundary component of a compact surface is homeomorphic to the circle; it is this rigidity that allows for the strengthening of the Bagpipe Lemma to the Bagpipe Theorem given below in dimension two.

\begin{Thm}[``The Bagpipe Theorem"{\cite[Theorem 5.14]{NyikosTheory}}]
Every \( \omega \)-bounded 2-manifold \( M \) has a compact subsurface \( K \) such that \( M \ssm K \) is the union of finitely many disjoint long pipes.
\end{Thm}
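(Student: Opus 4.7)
The plan is to start from the General Bagpipe Lemma, which I may apply in its original $\omega$-bounded form (Theorem~\ref{thm:bag-lemma}): there is an open Lindel\"of $U \subseteq M$ such that $M \smallsetminus \cl_M(U) = T_1 \sqcup \cdots \sqcup T_n$ with each $T_i$ a long trunk. Since $M$ is $\omega$-bounded and $U$ is Lindel\"of (hence separable), $\cl_M(U)$ is compact. Moreover, by inspecting the construction in the proof of Theorem~\ref{thm:gen-bag-lemma} (applied in dimension $2$), each neighborhood $U_{\ell}$ of a long end $\ell$ may be taken so that $\cl_M(U_{\ell})$ is a compact bordered surface with bi-collared boundary. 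In dimension $2$, each component of this boundary is a compact $1$-manifold without boundary, hence a disjoint union of circles; by further thickening (absorbing into the compact piece any surface needed to merge the circles via simple arcs and a regular neighborhood, exactly as in the path-concatenation steps of the proof of Theorem~\ref{thm:gen-bag-lemma}) I can arrange that $\partial(U_{\ell})$ is a \emph{single} circle $C_\ell$. Thus, after enlarging $U$ if necessary, each $T_i$ has a single boundary circle in $M$, and by Theorem~\ref{lem:finite-long} (applied to $\cl_M(T_i)$) it has exactly one end, which is long.

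The heart of the proof is then to show that each such $T_i$ is a long pipe. Let $T = T_i$ and consider the bordered manifold $T^\ast = T \cup C_{\ell_i}$. Since $T$ is type~I, take a canonical sequence $\{N_\alpha\}_{\alpha<\omega_1}$ for $T^\ast$, and for each $\alpha$ thicken $\cl(N_\alpha)$ by Proposition~\ref{prop:submanifold} to obtain a compact bordered surface $A_\alpha$ with bi-collared boundary, containing $C_{\ell_i}$ in its boundary. By absorbing any bounded complementary components of $A_\alpha$ into itself (which are all Lindel\"of in $T$ and hence have compact closure in $M$ by $\omega$-boundedness), I may further assume $T \smallsetminus A_\alpha$ is connected for cofinally many $\alpha$. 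For these $\alpha$, the ``outer'' boundary of $A_\alpha$ is a single circle, and between two such stages $\alpha < \beta$ the closed region $A_\beta \smallsetminus \int(A_\alpha)$ is a compact orientable or non-orientable surface with exactly two boundary circles and some genus $g_{\alpha,\beta}$; by the classification of compact surfaces it is a sphere-with-handles-and-crosscaps-and-two-holes. Absorbing this genus into $A_\alpha$ (i.e., redefining $A_\alpha$ to include the handles) yields that $A_\beta \smallsetminus \int(A_\alpha) \cong S^1 \times [0,1]$. A standard transfinite recursion on $\alpha < \omega_1$ (using that there are only countably many handles below any given stage, since each $A_\alpha$ is compact) produces a cofinal refinement $\{A'_\alpha\}_{\alpha<\omega_1}$ satisfying the defining conditions of a bordered long pipe. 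Removing the boundary circle $C_{\ell_i}$ exhibits $T$ as a long pipe.

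Finally, setting $K = \cl_M(U) \cup \bigcup_{i=1}^n (A_0^{(i)} \smallsetminus \int(U_{\ell_i}))$, where $A_0^{(i)}$ is the initial stage of the bordered long pipe exhaustion for $T_i^\ast$, yields a compact bordered subsurface of $M$ whose complement is the disjoint union of $n$ copies of the long pipes $T_i \smallsetminus A_0^{(i)}$. Compactness of $K$ is immediate since all the absorbed pieces are compact.

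The main obstacle is the middle paragraph: controlling handles so that the exhaustion really becomes annular. The two-dimensional classification of compact surfaces is essential here (it has no analogue in higher dimensions), and one must simultaneously ensure both that the genus accumulated between stages is finite (which follows from compactness of each $A_\alpha$) and that the refinement can be carried through $\omega_1$ stages without losing cofinality---the latter is where the $\omega$-bounded hypothesis (ultimately through Theorem~\ref{lem:finite-long}) is decisive, since it rules out any short end of $T$ at which handles could pile up.
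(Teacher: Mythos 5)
The paper does not actually prove this statement: it is quoted verbatim from Nyikos and used as a black box (the proof of Theorem~\ref{thm:big-bag} invokes its Corollary 5.16 directly). So your sketch can only be judged against Nyikos's original argument, and in outline it follows the same route he does: reduce to trunklike pieces via the Bagpipe Lemma, use two-dimensional surface classification to recognize the region between consecutive stages of an exhaustion, and push all genus into the compact bag. That is the right strategy, and your reduction to a single boundary circle per trunk and your identification of where the dimension-two hypothesis enters are both sound.

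There is, however, a genuine gap at the decisive step. Your recursion ``absorbs the genus of $A_\beta \ssm \int(A_\alpha)$ into $A_\alpha$'' at each stage, and your justification that ``the genus accumulated between stages is finite (which follows from compactness of each $A_\alpha$)'' only controls the genus appearing between two fixed countable stages. It does not rule out the scenario in which new genus (or new crosscaps) appears at cofinally many $\alpha<\omega_1$; in that case no compact set can absorb it all, the redefinition of $A_\alpha$ never stabilizes, and the limit of your recursion is not a bordered long pipe in the sense of Definition~\ref{def:pipe}, which requires \emph{every} $A_\alpha$ to be an annulus $\bS^1\times[0,\infty)$. What is missing is the lemma that an $\omega$-bounded surface has finite total genus and finite total crosscap number (Nyikos's Lemma preceding his Theorem 5.14). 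The argument is a supremum trick rather than a recursion: if the genus of $A_\alpha$ were unbounded in $\alpha$, choose $\alpha_n$ with genus of $A_{\alpha_n}$ at least $n$; the set $\bigcup_n A_{\alpha_n}$ is separable, so by $\omega$-boundedness its closure is compact, yet it contains compact subsurfaces of arbitrarily large genus, contradicting the classification of compact surfaces. Only after this is established can one fix a single $\alpha_0$ beyond which all regions between stages are planar, and then argue (again using finiteness of genus) that the outward boundary is eventually a single circle, so that the regions are genuine annuli. A similar global finiteness claim is silently needed for your ``single boundary circle'' reduction. Your closing paragraph shows you sense the difficulty, but the mechanism you name ($\omega$-boundedness ruling out a short end ``at which handles could pile up'') is not the one that resolves it; handles pile up along the length of the pipe, not at an end, and it is the compactness of closures of countable unions that forbids this.
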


The title of the theorem stems from the following definition and corollary:

\begin{figure}[t]
\includegraphics[scale=0.9]{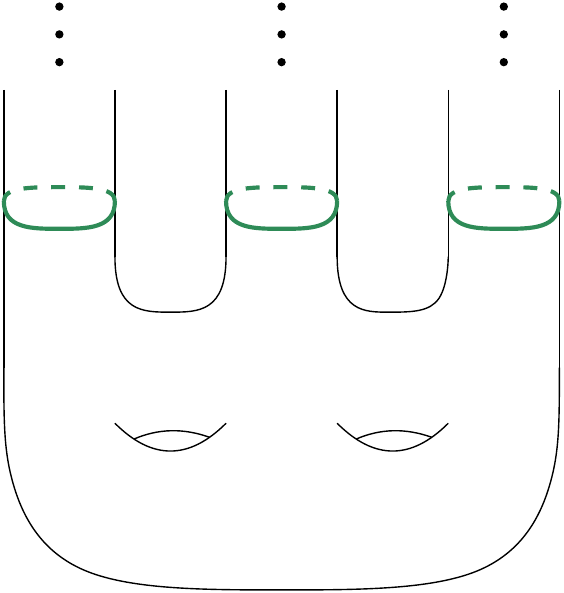}
\caption{A 3-ended genus-2 bagpipe.  Each of the simple closed curves shown bounds a long pipe.  The compact surface co-bounded by the simple closed curves drawn is the \emph{bag}.}
\label{fig:bagpipe}
\end{figure}

\begin{Def}
A \textbf{bagpipe} is the connected sum of a 2-sphere and finitely many tori, projective planes, and long planes (equivalently, a bagpipe is the connected sum of a compact 2-manifold with finitely many long planes).  See Figure~\ref{fig:bagpipe}.
\end{Def}

We note that the reason we introduce long planes (and Gauld uses an alternate definition of long pipe) is to be able to make the above definition of a bagpipe.  

\begin{Cor}
A 2-manifold is \( \omega \)-bounded if and only if it is a bagpipe.
\end{Cor}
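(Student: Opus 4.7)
The proof is a translation between the Bagpipe Theorem's decomposition and the connected-sum definition of a bagpipe, together with the characterization of $\omega$-boundedness in Theorem~\ref{lem:finite-long}.

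\emph{Forward implication.} Assume $M$ is $\omega$-bounded. The Bagpipe Theorem supplies a compact subsurface $K \subset M$ such that $M \ssm K$ is a disjoint union of long pipes $P_1, \ldots, P_n$. Since $K$ is a compact 2-manifold with boundary, $\partial K$ is a finite disjoint union of circles, and the local structure of $M$ near $\partial K$ pairs each circle of $\partial K$ with the short end of exactly one $P_i$. Cap each circle of $\partial K$ with a closed disk to produce a closed compact 2-manifold $\widehat K$; dually, adjoining a point at the short end of each $P_i$ yields a long plane $L_i$ (this is the defining property of a long pipe, see Definition~\ref{def:pipe}). These capping operations are inverse to the connected-sum construction, so $M = \widehat K \# L_1 \# \cdots \# L_n$. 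Finally, the classification of closed compact 2-manifolds writes $\widehat K$ as a connected sum of a 2-sphere with finitely many tori and projective planes; hence $M$ is a bagpipe.

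\emph{Reverse implication.} Assume $M = S \# L_1 \# \cdots \# L_n$ with $S$ closed compact and each $L_i$ a long plane. By Theorem~\ref{lem:finite-long}, it suffices to show that every end of $M$ is long. The connected-sum construction decomposes $M$ as the union of a compact piece (the image of $S$ minus $n$ open disks, together with collars of the gluing circles) and $n$ ``tails'', each a long plane with an open disk removed. Each long plane has a unique long end (as stated in the excerpt), so by Theorem~\ref{lem:finite-long} each $L_i$ is $\omega$-bounded; removing an open disk preserves $\omega$-boundedness, so each tail is also $\omega$-bounded, and its unique end coincides with the long end of the corresponding $L_i$. Thus the ends of $M$ are exactly the long ends of $L_1, \ldots, L_n$, and Theorem~\ref{lem:finite-long} concludes that $M$ is $\omega$-bounded.

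The main obstacle is the forward direction: realizing $M$ as a genuine iterated connected sum relies on the fact---built into Definition~\ref{def:pipe}---that a bordered long pipe is a long plane with an open disk removed, so that the cap-off operation indeed produces a long plane. Once this identification is in place, the remainder is standard surface topology: matching each boundary circle of $K$ with the short end of exactly one $P_i$, and invoking the classification of closed compact 2-manifolds to express $\widehat K$ in the required form.
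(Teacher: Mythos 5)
Your argument is correct and follows the route the paper intends: the corollary is stated there without proof, as an immediate consequence of Nyikos's Bagpipe Theorem together with exactly the cap-off/connected-sum dictionary you describe between the decomposition \( M = K \cup P_1 \cup \cdots \cup P_n \) and the definition of a bagpipe, plus Theorem~\ref{lem:finite-long} for the converse. One small point to tighten in the reverse direction: applying Theorem~\ref{lem:finite-long} requires the ends to be long \emph{as ends of \( M \)}, not merely as ends of the \( L_i \), but since you have already shown each tail is \( \omega \)-bounded and closed in \( M \) with compact complement of their union, \( M \) is a finite union of a compact set and closed \( \omega \)-bounded sets and hence \( \omega \)-bounded directly, so the conclusion stands.
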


We broaden the definition of a bagpipe by allowing the ``bag" to be second countable, rather than compact.
First, let a \emph{bordered bagpipe} be the connected sum of a compact surface with finitely many long planes\footnote{As is the case with bordered manifolds, it is possible for a bordered bagpipe to have empty boundary.}.

\begin{Def}
A \textbf{general bagpipe} is a space that can be written as a union \( \bigcup_{n \in \bn} B_n \) of closed subsets \( B_n \) such that \( B_n \) is a bordered bagpipe and \( B_n\subseteq\int(B_{n+1}) \) for all \( n \in \bn \). Equivalently, a general bagpipe is a 2-manifold constructed as the connected sum of a connected open subset of the 2-sphere and countably many tori, projective planes, and long planes. See Figure~\ref{fig:general_bagpipe}.
\end{Def}

The goal of this section is to prove:

\begin{Thm}[``The General Bagpipe Theorem"]
\label{thm:big-bag}
Let \( M \) be a  2-manifold.
\begin{enumerate}
\item
If \( M \) satisfies the EDP and either (i) \( M \) is of type I and has second countable end space or (ii) \( M \) has countable end space, then there exists an open  Lindel\"of subset \( U \) of \( M \) such that \( \cl_M(U) \) is a surface with compact boundary components and \( M \ssm U \) is the disjoint union of countably many bordered long pipes.
\item
If there exists an open  Lindel\"of subset \( U \) of \( M \) such that \( M \ssm \cl_M(U) \) is the disjoint union of countably many long pipes, then \( M \) satisfies the EDP, has second countable end space, and is of type I.
\end{enumerate} 
\end{Thm}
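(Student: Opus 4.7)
I plan to deduce Theorem~\ref{thm:big-bag} from the General Bagpipe Lemma (Theorem~\ref{thm:gen-bag-lemma}). The converse (part~(2)) reduces immediately, since a bordered long pipe is essentially a long trunk; the forward direction (part~(1)) requires one additional, genuinely $2$-dimensional refinement that promotes each long trunk in the decomposition of Theorem~\ref{thm:gen-bag-lemma}(1) to a bordered long pipe at its long end.

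\textbf{Part (2).} Observe that a bordered long pipe $P$ is, modulo its boundary circle, a long trunk: by Definition~\ref{def:pipe}, $\int(P)$ is a long pipe, which is a type I, trunklike $2$-manifold with exactly two ends---one short (at the boundary circle of $P$) and one long (at infinity)---and hence has the EDP. Given a decomposition $M = U \sqcup \bigsqcup_i P_i$ with $U$ open Lindel\"of and the $P_i$ disjoint bordered long pipes, we have $M \ssm \cl_M(U) = \bigsqcup_i \int(P_i)$, a disjoint union of countably many long trunks. Theorem~\ref{thm:gen-bag-lemma}(2) applied to this decomposition then yields at once that $M$ has the EDP, that $\ce(M)$ is second countable, and that $M$ is of type I.

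\textbf{Part (1).} I first apply Theorem~\ref{thm:gen-bag-lemma}(1) to obtain an open Lindel\"of $U_0 \subseteq M$ with $\cl_M(U_0)$ a bordered $2$-manifold with compact bi-collared boundary components, and $M \ssm \cl_M(U_0) = \bigsqcup_{i \in I} T_i$ a disjoint union of (at most) countably many long trunks, where each $T_i$ is a $2$-dimensional long trunk with a unique long end $\ell_i$. The heart of the argument is the claim that for each $i \in I$ there exists a closed bordered long pipe $P_i \subseteq T_i$ whose boundary is a bi-collared circle in $T_i$, whose long end coincides with $\ell_i$, and such that $T_i \ssm P_i$ is Lindel\"of. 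Granting this, set $U := M \ssm \bigsqcup_i P_i = \cl_M(U_0) \cup \bigcup_{i \in I}(T_i \ssm P_i)$. Then $U$ is open (each $P_i$ being closed in $M$), $\cl_M(U)$ is a bordered $2$-manifold with boundary $\bigsqcup_i \partial(P_i)$ (a disjoint union of compact bi-collared circles), and $M \ssm U = \bigsqcup_i P_i$ is the required disjoint union of bordered long pipes. Lindel\"ofness of $U$ follows since $\cl_M(U_0)$ is Lindel\"of, each $T_i \ssm P_i$ is Lindel\"of, and $I$ is countable.

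\textbf{Main obstacle: extracting $P_i$ from $T_i$.} By Lemma~\ref{lem:isolated}, $\ell_i$ is isolated in $\ce(T_i)$, so Lemma~\ref{lem:connected} (applied with ambient manifold $T_i$) yields a connected open neighborhood $W_i \subseteq T_i$ of $\ell_i$ with compact bi-collared boundary and $\hat{W_i} = \{\ell_i\}$; by shrinking $W_i$ so that it lies far into the canonical sequence of $T_i$, we may assume $\cl_M(W_i) \subseteq T_i$. Then $\cl_M(W_i)$ is a bordered $2$-manifold whose only end is the long end $\ell_i$, so, arguing as in Theorem~\ref{lem:finite-long}, it is $\omega$-bounded. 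To conclude, I would invoke Nyikos's original Bagpipe Theorem in its bordered-manifold version (obtained, for instance, by capping off $\partial(W_i)$ with a disk to get an $\omega$-bounded closed $2$-manifold, applying the classical Bagpipe Theorem, and then recutting along the same disk): the bordered $2$-manifold $\cl_M(W_i)$ decomposes as a compact bordered surface with a bordered long pipe attached along a bi-collared circle. Cutting along a bi-collared circle deep in the long-pipe tail exposes the desired $P_i$, while the residual compact piece is absorbed into $T_i \ssm P_i$; the latter has as its ends precisely the finitely many short ends of $T_i$, all short, and hence is Lindel\"of by Theorem~\ref{thm:characterize}(i). This $2$-dimensional step---promoting a one-long-ended $\omega$-bounded bordered $2$-manifold to a bordered long pipe tail plus a Lindel\"of remainder---is the only nontrivial addition to the proof beyond Theorem~\ref{thm:gen-bag-lemma} and is what leverages dimension two essentially, via the classification of compact bordered $2$-manifolds and the fact that their boundary components are circles.
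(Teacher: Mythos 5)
Your proposal is correct and follows essentially the same route as the paper: both parts reduce to the General Bagpipe Lemma (Theorem~\ref{thm:gen-bag-lemma}), and the forward direction then promotes each long trunk to a bordered long pipe by passing to a one-ended \( \omega \)-bounded neighborhood of its long end (Lemmas~\ref{lem:isolated} and~\ref{lem:connected} together with Theorem~\ref{lem:finite-long}) and invoking the classical Bagpipe Theorem---the paper cites Nyikos's Corollary~5.16 directly where you re-derive the bordered version by capping off and recutting. The only thin spot is your claim that \( U = M \smallsetminus \bigsqcup_i P_i \) is open ``each \( P_i \) being closed'': a countable union of closed sets need not be closed, and this must be repaired either by observing that the family \( \{P_i\} \) is locally finite (which follows from \( \cl_M(U_0) \) being a bordered manifold whose boundary is the union of the \( \partial(T_i) \)) or, as the paper does, by defining \( U \) to be the interior of the union of \( U_0 \) with the compact residual pieces.
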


\begin{figure}[t]
\includegraphics[scale=0.9]{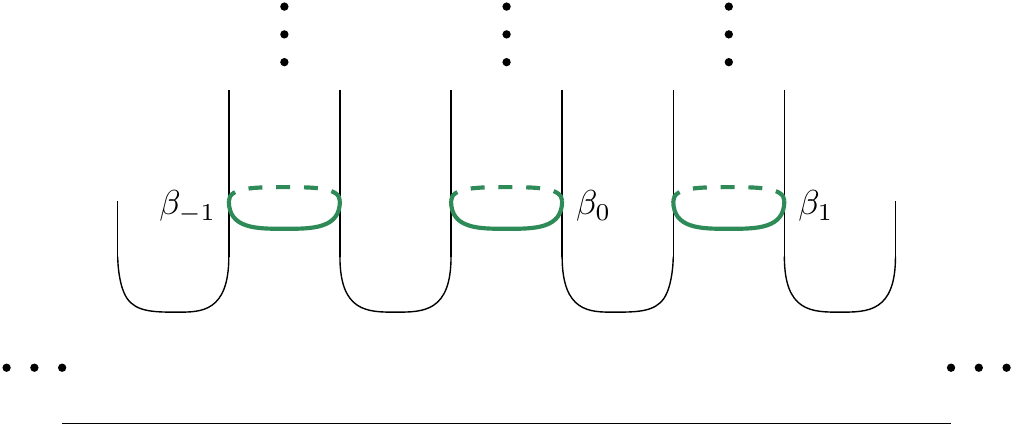}
\caption{A general bagpipe with two short ends---the left and right horizontal endpoints---and a countably infinite set of long ends, which belong to long pipes bounded by the \( \beta_n \).
Observe that the subsurface co-bounded by the \( \beta_n \) is second countable. 
}
\label{fig:general_bagpipe}
\end{figure}

\begin{proof}
First note that (2) follows immediately from Theorem~\ref{thm:gen-bag-lemma} as long pipes are long trunks.
For (1), we begin by invoking Theorem~\ref{thm:gen-bag-lemma}, which yields a Lindel\"of subsurface \( U' \) of \( M \) such that \( \cl(U') \) is a bordered manifold, \( \partial(U') \) is bi-collared, and \( M \ssm \cl_M(U') = \bigsqcup_{i\in I} T_i \), where \(I\) is countable and \( T_i \) is a long trunk.
Moreover, note that \( \cl_M(T_i) \) is a surface for each \( i \in I \). Use Lemma~\ref{lem:connected} to shrink $T_i$ so that \( \cl_M(T_i) \) is a bordered manifold with bi-collared boundary in \( M \)  such that $\hat{T_i}$ only contains the unique end of $T_i$. In this case \( \cl_M(T_i) \) is an \( \omega \)-bounded surface (by Theorem~\ref{lem:finite-long}) and hence, by \cite[Corollary 5.16]{NyikosTheory} (a corollary of the Bagpipe Theorem), there exists a compact subsurface \( K_i \) of \( T_i \) such that \( \cl_M(T_i \ssm K_i) \) is a bordered long pipe.

Define \( U \) to be the interior of \( U' \cup \left( \bigcup_{i \in I} K_i \right) \).
As \( U \) is the interior of a Lindel\"of subset of a manifold, it itself is Lindel\"of.
By construction, the complement of \( \cl_M(U) \) is a disjoint union of countably many long pipes.
\end{proof}

\begin{Cor}
Let \( M \) be a 2-manifold of type I.
The manifold \( M \) satisfies the EDP and has second countable end space if and only if it is a general bagpipe.
\end{Cor}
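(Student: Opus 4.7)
The plan is to deduce this corollary as a reformulation of Theorem~\ref{thm:big-bag}: for a type I 2-manifold $M$, the existence of an open Lindel\"of subset $U \subseteq M$ with $M \ssm \cl_M(U)$ a disjoint union of countably many long pipes is equivalent to $M$ being a general bagpipe. Both implications are direct translations between the two decompositions.

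For the forward direction, I would apply Theorem~\ref{thm:big-bag}(1) to obtain such a $U$ with $\cl_M(U)$ a bordered surface having (countably many) compact circular boundary components and $M \ssm U = \bigsqcup_{i \in I} T_i$, where $I$ is countable and each $T_i$ is a bordered long pipe, say $T_i = L_i \ssm \mathring{E}_i$ for a long plane $L_i$ and a closed disk $E_i \subseteq L_i$. Being a Lindel\"of bordered 2-manifold, $\cl_M(U)$ admits an exhaustion $\cl_M(U) = \bigcup_n C_n$ by compact connected subsurfaces with $C_n \subseteq \int_{\cl_M(U)}(C_{n+1})$; after fixing an enumeration $I = \{i_1, i_2, \ldots\}$, I would arrange that $C_n$ also contains the boundary circles $\partial T_{i_1}, \ldots, \partial T_{i_{\min(n,|I|)}}$ among its own boundary components. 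Setting $B_n := C_n \cup T_{i_1} \cup \cdots \cup T_{i_{\min(n,|I|)}}$, one checks that filling in disks along the $\partial T_{i_k}$ converts $C_n$ into a compact surface $\Sigma_n$ and exhibits $B_n$ as the connected sum of $\Sigma_n$ with the long planes $L_{i_1}, \ldots, L_{i_{\min(n,|I|)}}$, so that $B_n$ is a bordered bagpipe. Each $B_n$ is closed in $M$ (the $T_i$ are components of the closed set $M \ssm U$ in the locally connected space $M$), the nesting $B_n \subseteq \int_M(B_{n+1})$ holds by construction, and $\bigcup_n B_n = M$; hence $M$ is a general bagpipe.

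For the reverse direction, I would invoke the equivalent description of a general bagpipe as the connected sum of a connected open subset $V$ of $\bS^2$ with countably many tori, projective planes, and long planes. Let $I_\ell$ index the attached long planes $\{L_i\}_{i \in I_\ell}$, each contributing via the removal of the interior of a closed disk $E_i \subseteq L_i$. For each $i \in I_\ell$, choose a slightly larger closed disk $F_i \subseteq L_i$ with $E_i \subseteq \mathring{F}_i$, and set $U := M \ssm \bigsqcup_{i \in I_\ell}(L_i \ssm \mathring{F}_i)$. Then $U$ is open, since each $L_i \ssm \mathring{F}_i$ is closed in $L_i$ and hence (after the connected-sum identifications) in $M$. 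Moreover $U$ equals the image of $V$ in $M$ together with the countably many compact summands (for the tori and projective planes) and the half-open annuli $\mathring{F}_i \ssm \mathring{E}_i$ (for $i \in I_\ell$), so $U$ is a countable union of second countable sets, hence Lindel\"of. Finally $M \ssm \cl_M(U) = \bigsqcup_{i \in I_\ell}(L_i \ssm F_i)$, and each $L_i \ssm F_i$ is precisely the set of manifold points of the bordered long pipe $L_i \ssm \mathring{F}_i$, i.e., a long pipe. Theorem~\ref{thm:big-bag}(2) applied to this $U$ then delivers the EDP together with second countability of the end space.

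The main technical point is the connected-sum bookkeeping: in the forward direction, verifying that $B_n$ really satisfies the official definition of a bordered bagpipe (rather than merely resembling one), and checking that the boundary components of $C_n$ lying inside $U$---rather than on the $\partial T_{i_k}$---do not spoil the nesting $B_n \subseteq \int_M(B_{n+1})$. The latter works because those extra boundary components of $C_n$ lie in the open set $U$, so $U \cap \int_{\cl_M(U)}(C_{n+1})$ supplies the required open $M$-neighborhood inside $B_{n+1}$. In the reverse direction, the analogous care is needed to ensure that $L_i \ssm \mathring{F}_i$ is closed in $M$ (not just in $L_i$) so that $U$ is genuinely open, which follows from the fact that $L_i \ssm \mathring{F}_i \subseteq L_i \ssm \mathring{E}_i$ and that the latter sits as a closed subspace of $M$ under the connected-sum gluing.
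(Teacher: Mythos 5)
Your first direction (EDP plus second countable end space implies general bagpipe) is essentially the paper's own argument: invoke Theorem~\ref{thm:big-bag}(1), exhaust the Lindel\"of part \( \cl_M(U) \) by compact subsurfaces arranged to contain the boundary circles of the first \( n \) long pipes, and let \( B_n \) be that subsurface together with the long pipes it already bounds. One small point: the paper first disposes of the cases where \( M \) is compact or has only long ends (in which case \( M \) is \( \omega \)-bounded by Theorem~\ref{lem:finite-long} and hence already a bagpipe), so that \( \cl_M(U) \) is a genuinely non-compact second-countable surface before being exhausted; you should do the same, since your nested exhaustion \( C_n \subseteq \int(C_{n+1}) \) degenerates when \( \cl_M(U) \) is compact.

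Your second direction genuinely departs from the paper. The paper argues directly from the exhaustion \( M = \bigcup_{n\in\bn} B_n \): each inclusion \( B_n \hookrightarrow M \) is proper, each \( B_n \) has finitely many ends, all long, by Theorem~\ref{lem:finite-long}, and the remaining ends are short, with the components of \( M \ssm B_n \) supplying a countable basis for \( \ce(M) \). You instead manufacture a Lindel\"of open set \( U \) from the connected-sum description and feed it into Theorem~\ref{thm:big-bag}(2); this is a legitimate and arguably more economical reduction, but as written it has one unjustified step. You claim \( U = M \ssm \bigsqcup_{i}\bigl(L_i \ssm \int(F_i)\bigr) \) is open because each \( L_i \ssm \int(F_i) \) is closed in \( M \); for a countably infinite union of closed sets this is not enough. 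What is actually needed is that the family \( \{L_i \ssm \int(F_i)\}_{i} \) is locally finite in \( M \). This is true---it is implicit in forming an infinite connected sum, and it is forced by the primary definition of a general bagpipe, since each \( B_n \) is a bordered bagpipe meeting only finitely many of the non-Lindel\"of pieces \( L_i \ssm \int(F_i) \) and \( B_n \subseteq \int(B_{n+1}) \) with \( \bigcup_n B_n = M \)---but it must be said, and your stated justification does not supply it. Avoiding exactly this bookkeeping is the main thing the paper's direct argument buys.
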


\begin{proof}
Let us first assume that \( M \) is a general bagpipe.
Write \( M = \bigcup_{n\in\bn} B_n \), where \( B_n \) is a bordered bagpipe, \( B_n \) is a closed subset of \( M \), and \( B_n \subset \int(B_{n+1}) \). 
The inclusion \( \iota_n \co B_n \hookrightarrow M \) is proper and induces an embedding \( \hat \iota_n \co \ce(B_n) \to \ce(M) \); let \( E_n \) denote the image of \( \hat \iota_n \).
By Theorem~\ref{lem:finite-long}, \( B_n \) has finitely many ends, all of which are long; hence, \( E_n \) is finite and consists of long ends. 
Let \( E_\infty \) denote the complement of \( \bigcup_{n\in\bn} E_n \) in \( \ce(M) \). 
By construction, if \( E_\infty \neq \varnothing \), then it is clear that every end of \( E_\infty \) is short and so \( \ce(M) \) has the EDP.
In addition, the collection \( \{\{e\} \mid e\in E_n\text{ for some }n\in\bn\} \) together with the sets of the form \( \hat U_n \), where \( U_n \) is a component of \( M \ssm B_n \), form a countable basis for the topology of \( \ce(M) \); hence, \( \ce(M) \) is second countable. 

For the other direction, assume \( M \) satisfies the EDP and has second countable end space.
By definition, a compact 2-manifold is a bagpipe, so let us assume that \( M \) is non-compact. 
If \( M \) only has long ends, then, by Theorem~\ref{lem:finite-long}, \( M \) is \( \omega \)-bounded and hence a bagpipe. 
So, let us assume that \( M \) has at least one short end. 
Then, by the General Bagpipe Theorem, there is an open subset \( U \) of \( M \) such that \( \Sigma = \cl_M(U) \) is a second-countable surface with compact boundary components and \( M \ssm U \) is the disjoint union of countably many bordered long pipes.
%Observe that each component of \( M \ssm U \) is a surface with compact boundary components and whose set of manifold points is a long pipe.  
%It follows that each component of \( M \ssm U \) has connected boundary: otherwise, \( M \ssm \Sigma \) would have a component with more than two ends, which contradicts the assumption that it is  a long pipe.

As we have seen before, as every long end is isolated (Lemma~\ref{lem:isolated}) and \( \ce(M) \) is second countable, there are at most countably many long ends; let \( \{\ell_i\}_{i\in I} \) be an enumeration.
By construction, for each \( i \in I \), there is a unique component of \( M \ssm \Sigma \), call it \( L_i \), such that \( \ell_i \in \hat L_i \) (note that \( L_i \) is a long pipe).
We also let \( \be_i = \partial (L_i) \).  

Now, as \( \Sigma \) is a non-compact second-countable surface there exists a collection of compact subsurfaces \( \{K_n\}_{n\in\bn} \) such that \( \Sigma = \bigcup_{n\in\bn} K_n \) and \( K_n \subset \int(K_{n+1}) \). 
In addition, for each \( n \in \bn \), if \( I \) is finite, we require \( K_n \) to contain \( \beta_i \) for all \( i \in I \); otherwise, we identify \( I \) with \( \bn \) and require \( K_n \) to contain \( \be_i \) for all \( i \leq n \). 
In either case, let \( B_n \) be obtained by taking the union of \( K_n \) with every \( L_i \) satisfying \( \be_i \subset K_n \). 
We then have---as desired---that \( B_n \) is closed subset of \( M \), \( B_n \subset \int(B_{n+1}) \), \( B_n \) is a bordered bagpipe, and \( M = \bigcup_{n\in\bn} B_n \); hence, \( M \) is a general bagpipe.
\end{proof}

\appendix

\section{A type I manifold with all short ends and \\ whose end space is not second countable}
\label{sectionexamples}

In this appendix, we build a type I manifold, all of whose ends are short and whose end space fails to be second countable. Consequently, the manifold fails to be a general bagpipe. This shows that second countability of the end space is a necessary assumption in Theorem~\ref{thm:gen-bag-lemma} and Theorem~\ref{thm:big-bag}. This manifold is built using (set-theoretic) trees, so we first proceed to remind the reader of some standard facts about trees.

\begin{Def}
A {\bf tree} is a partially ordered set $(T,\leq)$ such that, for every $t\in T$, the set $\{s\in T\big|s\leq t\}$ is (with the inherited order) well-ordered.
\end{Def}

All of the trees used here will be {\it connected} (i.e., every tree will be assumed to have a minimum element, called its {\it root}). 
An element \( t \in T \) is said to have {\em height} \( \al \), written \( \h(t) = \alpha \), if \( \{s \in T\big| s<t\} \) has order-type \( \al \).
Given an ordinal $\alpha$, the $\alpha$-th level of the tree is the set
\begin{equation*}
T_\alpha=\{t\in T\big|\h(t) = \al\},
\end{equation*}
and we will use notations such as $T_{\leq\alpha}$ or $T_{<\alpha}$ with the obvious meanings. The {\em height} of the tree $T$, denoted $\h(T)$, is the least $\alpha$ such that $T_\alpha=\varnothing$. An {\em $\omega_1$-tree} is a tree of height $\omega_1$ that has countable levels. 
A {\em chain} of the tree $T$ is a subset $C\subseteq T$ any two of whose elements are $\leq$-comparable, and an {\em antichain} of $T$ is a subset $A\subseteq T$, any two of whose elements are $\leq$-incomparable. A maximal chain is called a {\em branch}. Also, elements of $T$ are often called {\em nodes}.

We will moreover assume that every tree under consideration is {\it well-pruned}, that is, for every node $t\in T$ and every $\alpha$ such that $\h(t)<\alpha<\h(T)$, there exists a node $s\in T_\alpha$ with $t\leq s$. We will also assume every tree to be {\it Hausdorff}, i.e., for every limit ordinal $\alpha$ and every chain $C$ such that $\sup\{\h(t)\big|t\in C\}=\alpha$, there exists at most one $t\in T_\alpha$ whose set of predecessors contains $C$. Finally, all of our trees will satisfy that, whenever a node $t$ of the tree has at least one successor, then it has at least two immediate successors (that is, there are at least two distinct $s_1,s_2\geq t$ such that $\h(s_1)=\h(s_2)=\h(t)+1$).

The following definition specifies the kind of tree that we will need for our construction.

\begin{Def}
An {\bf Aronszajn tree} is an $\omega_1$-tree without uncountable branches.
\end{Def}

Aronszajn trees are counterexamples to the natural analog of K\"onig's lemma at cardinality $\aleph_1$ (recall that K\"onig's lemma states that every tree of height $\omega$ with finite levels must have an infinite branch). The following is a quite classical result in set theory.

\begin{Thm}\label{existsaronszajn}
There exists an Aronszajn tree.
\end{Thm}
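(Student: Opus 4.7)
The plan is to carry out the classical construction that realizes an Aronszajn tree as a set of strictly increasing, bounded maps from countable ordinals into $\mathbb{Q}$, ordered by end-extension. I would define, by recursion on $\alpha<\omega_1$, a countable set $T_\alpha$ consisting of strictly increasing functions $f\co\alpha\to\mathbb{Q}$ whose range is bounded above, arranging that every $f\in T_{<\alpha}$ has an extension in $T_\alpha$. The union $T=\bigcup_{\alpha<\omega_1}T_\alpha$, ordered by $f\leq g \iff f\subseteq g$, is then an $\omega_1$-tree, and the absence of an uncountable branch is automatic: such a branch would assemble into a strictly increasing function $\omega_1\to\mathbb{Q}$, which is impossible since $\mathbb{Q}$ is countable.

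For the recursive construction, set $T_0=\{\varnothing\}$. At a successor stage $\alpha+1$, to each $f\in T_\alpha$ append every rational $q>\sup(\mathrm{ran}\,f)$; this keeps $T_{\alpha+1}$ countable and gives every node infinitely many immediate successors, which in particular handles the paper's assumption that every node with a successor has at least two immediate ones.

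The hard step is the limit stage. Given a countable limit ordinal $\alpha$ with $T_{<\alpha}$ already defined, I would fix once and for all a cofinal sequence $(\alpha_n)_{n\in\omega}$ in $\alpha$ and, for each pair $(f,q)$ with $f\in T_{<\alpha}$ and $q\in\mathbb{Q}$ strictly greater than $\sup(\mathrm{ran}\,f)$, construct a specific extension $g_{f,q}\in T_\alpha$ of $f$ whose range is bounded above by $q$. The construction is a recursive climb through the levels $\alpha_n$: pick rationals $\sup(\mathrm{ran}\,f)<\cdots<q_2<q_1<q_0=q$ with $\sup_n q_n<q$, and extend $f$ step by step, at each stage choosing an extension in $T_{\alpha_{n+1}}$ whose range stays below $q_{n+1}$. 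The union of the resulting chain is a strictly increasing function $\alpha\to\mathbb{Q}$ bounded by $q$, which goes into $T_\alpha$. Only countably many pairs $(f,q)$ arise, so $T_\alpha$ remains countable.

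The main obstacle is making this limit-stage recursion go through, namely guaranteeing that the required extensions in $T_{\alpha_{n+1}}$ bounded by $q_{n+1}$ actually exist. This forces the inductive hypothesis to be strengthened: at each previously handled stage $\beta<\omega_1$, for every $h\in T_{<\beta}$ and every rational $r>\sup(\mathrm{ran}\,h)$, there must already be an element of $T_\beta$ extending $h$ with range bounded above by $r$. Tracking this strengthened hypothesis through both successor and limit stages is routine, since the limit-stage procedure above is explicitly designed to produce precisely such extensions. With this packaging, the recursion runs, $T$ has countable levels and height $\omega_1$, is easily checked to be well-pruned and Hausdorff (a chain through a limit level determines its limit function uniquely), and has no uncountable branch, so it is an Aronszajn tree.
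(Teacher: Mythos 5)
The paper does not actually prove this statement: it simply cites Kunen and Devlin for the classical construction. Your proposal reconstructs essentially the proof those references contain (the tree of bounded strictly increasing $\mathbb{Q}$-valued sequences under end-extension), including the two genuinely important points: the strengthened inductive hypothesis that every node can be extended to every higher countable level with range bounded by any prescribed rational above its supremum, and the observation that an uncountable branch would yield a strictly increasing map $\omega_1\to\mathbb{Q}$. The architecture is right, and the remarks about countable levels, well-prunedness, the Hausdorff property, and reducing to a binary tree all check out (the paper itself explains the pruning to a binary tree after the theorem).

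There is, however, one concrete slip in the limit stage as written. You choose a \emph{decreasing} chain of bounds $q_0=q>q_1>q_2>\cdots$ and ask at step $n$ for an extension $g_{n+1}\in T_{\alpha_{n+1}}$ of $g_n$ with range below $q_{n+1}$. The inductive hypothesis only applies if $q_{n+1}>\sup(\mathrm{ran}\,g_n)$, but all you know is $\sup(\mathrm{ran}\,g_n)\leq q_n$, and $q_{n+1}<q_n$, so the required room below the next bound may not exist and the climb can stall; moreover $\sup_n q_n=q_0=q$, contradicting the stated requirement $\sup_n q_n<q$. The fix is standard and small: either fix in advance an \emph{increasing} sequence of rationals $q_0<q_1<\cdots$ with $\sup(\mathrm{ran}\,f)<q_0$ and $\sup_n q_n\leq q$ and demand $\sup(\mathrm{ran}\,g_n)<q_n$ at each step (so that $q_{n+1}>q_n>\sup(\mathrm{ran}\,g_n)$ always leaves room), or choose each bound adaptively as any rational strictly between $\sup(\mathrm{ran}\,g_n)$ and $q$. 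With that correction the recursion closes and the argument is complete.
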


\begin{proof}
See~\cite[Theorem II.5.9, p. 70]{KunenSetTheory} or~\cite[Theorem III.1.1, p. 111]{DevlinConstructibility}.
\end{proof}

We point out that, in fact, there exists a binary Aronszajn tree (i.e., an Aronszajn tree each of whose nodes has exactly two immediate successors); such a tree can be obtained by taking an arbitrary Aronszajn tree and recursively removing nodes at each level to ensure that every remaining node has exactly two immediate successors (the resulting tree will still be Aronszajn).

\begin{Def}\label{treetopology}
Given a tree $T$, we will define a topology on $T$ as follows. Let $S(T)$ be the collection of nodes whose height is a successor ordinal. We stipulate that the collection
\begin{equation*}
\{t^\uparrow \mid t\in S(T)\} \cup \{ T \ssm t^{\uparrow} \mid t\in S(T)\}
\end{equation*} 
forms a subbasis---of clopen sets---for the topology, where $t^{\uparrow}=\{x\in T\mid t\leq x\}$ is the ``upward cone'' with base $t$.
\end{Def}

Note that, in Definition~\ref{treetopology}, every node whose height is a successor ordinal will be isolated. On the other hand, nodes of limit height are always accumulation points of their set of predecessors.

In order to state the next theorem, we introduce some terminology.

\begin{Def}\label{treeclosure}
Let $T$ be a tree.
\begin{enumerate}
\item A chain $C\subseteq T$ will be called {\bf full} if it is closed downwards. That is, if $t\in C$ and $s\leq t$ then $s\in C$.
\item A chain $C\subseteq T$ will be called {\bf closed} if it is a closed set in the tree topology. Equivalently, whenever $\alpha$ is a limit ordinal and $t\in T_\alpha$ is such that, for unboundedly many $\xi<\alpha$, the predecessor $t_\xi$ of $t$ at level $\xi$ belongs to $C$, then $t\in C$.
\item We define a new tree, denoted $\overline{T}$, whose nodes are all closed and full chains of $T$, ordered by inclusion. That is, 
\begin{equation*}
\overline{T}=\{C\subseteq T\big|C\text{is a closed full chain}\},
\end{equation*}
with ordering given by $C\leq C'$ if and only if $C\subseteq C'$.
\end{enumerate}
\end{Def}

Note that, according to Definition~\ref{treeclosure}, we can always embed the tree $T$ into the tree $\overline{T}$ by mapping each $t\in T$ to the closed and full chain  $C_t = \{s\in T\big|s\leq t\}$. Thus, $\overline{T}$ contains an isomorphic copy of $T$; in fact, $\overline{T}$ can be thought of as the tree that results from taking $T$ and, for every branch $C$ in $T$ without a maximal element, adding a node on top of $C$.
Going forward, we abuse notation and identify \( T \) with its isomorphic copy in \( \overline T \).

\begin{Thm}\label{treeconstruction}
Let $T$ be a binary $\omega_1$-tree. Then there exists a  2-manifold of type I  whose space of ends is homeomorphic to the topological space arising from $\overline{T}$. Furthermore, all ends corresponding to nodes in $\overline{T}_{<\omega_1}$ are short.
\end{Thm}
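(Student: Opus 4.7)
My plan is to construct $M$ by a transfinite gluing of ``4-holed spheres'' indexed by the nodes of $T$. For each $t\in T$, I use a 4-holed sphere $P_t$: one cuff used to glue to $t$'s immediate predecessor (or capped off by a disk if $t$ is the root), two cuffs used to glue to $t$'s two immediate successors, and a fourth cuff attached to a half-open cylinder $\mathbb{S}^1\times[0,1)$, which contributes an isolated short end $b_t$ of $M$.

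I build $M$ by recursion on height, producing an increasing family $\{M_\alpha\}_{\alpha<\omega_1}$ of open Lindel\"of subspaces with $M=\bigcup_{\alpha<\omega_1}M_\alpha$, in such a way that the boundary circles ``visible'' at step $\alpha$ are indexed by $T_\alpha$. At a successor step $\alpha\to\alpha+1$, for each $t\in T_\alpha$ I attach $P_t$ along the current boundary circle $c_t$, producing new boundary circles $c_{s_1},c_{s_2}$ indexed by the two immediate successors of $t$ and creating an isolated short end $b_t$. The main obstacle is the limit step: at a countable limit $\alpha$, the chain of predecessors of each $t\in T_\alpha$ yields a nested sequence of circles in $M_{<\alpha}:=\bigcup_{\beta<\alpha}M_\beta$ accumulating to a short end $\tilde e_t$ of $M_{<\alpha}$, to which I must glue $P_t$. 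This is done by identifying the ``incoming cuff'' of $P_t$ with a sufficiently small cylindrical collar of $\tilde e_t$ in $M_{<\alpha}$; since the distinct $\tilde e_t$ for $t\in T_\alpha$ are separated in $\cf(M_{<\alpha})$ (by the Hausdorff condition on $T$, the chains of predecessors branch at some common level $<\alpha$), compatible pairwise disjoint gluings can be chosen.

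Once $M$ is assembled, it is a connected Hausdorff 2-manifold of type I, with the $M_\alpha$ serving as a canonical sequence: each $M_\alpha$ is Lindel\"of because $T_{<\alpha}$ is countable, and $\cl_M(M_\alpha)\subseteq M_{\alpha+1}$ by construction. For the end-space identification I define $\Phi\co\overline{T}\to\ce(M)$ by $\Phi(C_t)=b_t$ whenever $t$ has successor height in $T$, and $\Phi(C)$ equal to the ``chain-escape'' end obtained by following the pants along $C$ in all other cases. I then check that $\Phi$ is a homeomorphism by matching subbasic open sets: for $s\in T$ of successor height, $C_s^\uparrow\subseteq\overline{T}$ corresponds to $\hat V_s\subseteq\ce(M)$, where $V_s$ is the unbounded component of $M\ssm c_s$ containing $P_s$ together with everything glued above it; since $\hat V_s$ contains exactly the ends whose associated chain passes through $s$, $\Phi$ is a continuous bijection of compact Hausdorff spaces, hence a homeomorphism.

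Finally, the short-end claim follows from noting that for $C\in\overline{T}_{<\omega_1}$ the order type of $C$ is countable, so the set $\{C_s:s\in C,\ \h(s)\text{ is a successor ordinal}\}$ is countable and yields a neighborhood basis of $C$ in $\overline{T}$ (it is in fact a singleton basis if $C$ has successor height, and otherwise the cones $C_s^\uparrow$ for $s\in C$ of successor height form a descending sequence with intersection $C^\uparrow$). Via $\Phi$ this transfers to a countable neighborhood basis of $\Phi(C)$ in $\cf(M)$, so $\Phi(C)$ is a $G_\delta$-point of $\cf(M)$, i.e., a short end.
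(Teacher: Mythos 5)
Your successor steps, the type~I verification, the subbasis-matching argument for $\Phi$, and the short-end count are all in the spirit of the paper's proof. The construction breaks, however, at the limit step, which is precisely where the real difficulty of this theorem lives. For a limit ordinal $\alpha$ and $t\in T_\alpha$, the end $\tilde e_t$ of $M_{<\alpha}$ determined by the chain of predecessors of $t$ is \emph{not} an isolated end: every neighborhood of $\tilde e_t$ with compact boundary is cut off by some circle $c_{t_\xi}$ and therefore contains the cylinder ends $b_{t_{\xi'}}$ for all $\xi'\geq\xi$ as well as all the ends of the subtrees hanging off the chain above the off-chain sibling of each $t_{\xi'}$ (the tree is binary and well-pruned, so these are plentiful). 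Consequently $\tilde e_t$ has no neighborhood homeomorphic to $\mathbb S^1\times(0,1)$, and the ``sufficiently small cylindrical collar of $\tilde e_t$'' along which you propose to glue the incoming cuff of $P_t$ does not exist. The Hausdorff condition on $T$ only separates $\tilde e_t$ from $\tilde e_{t'}$ for distinct $t,t'\in T_\alpha$; it does nothing about the other ends accumulating at $\tilde e_t$. There is a second, related problem: even if you could excise a circle near $\tilde e_t$, gluing a pants along a genuine circle there caps off that end, yet the closed full chain $C_t$ still belongs to $\overline T_{<\omega_1}$ and must be represented by a (short) end of the final manifold. Your prescription for $\Phi$ on limit-height chains (``the chain-escape end obtained by following the pants along $C$'') is then ill-defined: in $\ce(M)$ the intersection $\bigcap_{\xi<\alpha}\hat V_{t_\xi}$ is the entire clopen set of ends lying above $t$, not a singleton, so there is no canonical end left for $C_t$ and $\Phi$ cannot be the claimed bijection.

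This is exactly why the paper's building block is a pair of pants with a \emph{point deleted from each boundary circle}, so that all gluings happen along copies of $\br$ rather than $\bS^1$, and why its limit step is so much more elaborate: the countable union $X=\bigcup_{\xi<\alpha}S_\xi$ is re-realized, via the classification of second-countable surfaces, as an explicit open subset of the Riemann sphere with the end set placed on the real line, and a small disk tangent to $\br$ is removed at each end indexed by $T_\alpha$. This converts each such end into a boundary \emph{line} (a circle minus its tangency point) along which the next building block can be attached, while the tangency point itself survives as an end of $S_\alpha$ representing the node $t$. Some such device for ``opening up'' a non-isolated end without destroying it is unavoidable, and your proposal as written has no substitute for it. (A much smaller issue of the same flavor: capping the root's incoming cuff with a disk leaves the root node, whose height $0$ is not a successor, without a designated end; the paper instead deletes a boundary component of the first block so that the root is represented by the end $e_0$.)
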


\begin{proof}
The construction employs a number of ``building blocks''. Each of these building blocks is constructed as follows:
let \( P \) be a compact pair of pants, that is, a compact, orientable surface of genus 0 with three boundary components (e.g., the 2-sphere with three pairwise-disjoint open disks removed).
Let \( B \) be obtained by removing a point from each boundary component of \( P \).
Note that \( B \) is a non-compact surface with three boundary components each of which is homeomorphic to \( \br \). 
Also note that \( B \) has three ends.

We now simultaneously define, by recursion on $\alpha<\omega_1$, (1)  orientable Lindel\"of bordered surfaces, denoted $S_\alpha$, along with bijections $\varphi_\alpha$ between the nodes of the $(\alpha+1)$-st level $T_{\alpha+1}$ of the tree $T$ and the boundary components of $S_\alpha$, and (2) functions \( f_\alpha \co \overline {T}_{\leq(\alpha+1)} \to \ce(S_\alpha) \). 
We will require that each boundary component of $S_\alpha$ be homeomorphic to \( \br \).

To begin, let \( S_0 = B \ssm \partial_1 \), where \( \partial_1 \) is a boundary component of \( B \).  
We let $\varphi_0$ be any bijection between the two nodes that belong to the level $T_1$ and the two boundary components of $S_0$ (intuitively speaking, the boundary component that we deleted corresponds to the root of the tree, and $\varphi_{-1}$ should be the bijection mapping this ``hole'' to the root of $T$). 
Now there is a unique end of \( S_0 \) that can only be reached by manifold points of \( S_0 \), call it \( e_0 \) (and note that $e_0$ is short), and define \( f_0(T_0) = e_0 \).  
Next, let \( b \) be a boundary component of \( S_0 \), then there is a unique end of \( S_0 \), call it \( e_b \), that is in the closure of \( b \) in \( \cf(S_0) \) (and note that this end $e_b$ must be short); define \( f_0(\varphi_0^{-1}(b)) = e_b \). 

Now assume that $S_\alpha$, $\varphi_\alpha$, and \( f_\alpha \) have been constructed. To define $S_{\alpha+1}$, we first pick, for each node $t\in T_{\alpha+1}$ of the tree $T$ at level $\alpha+1$, a copy $B_t$ of the basic building block $B$, in such a way that these copies are all pairwise disjoint. We now proceed to glue $S_\alpha$ with all of the $B_t$ by identifying one of the boundary components of $B_t$ with the boundary component $\varphi_\alpha(t)$ of $S_\alpha$ via orientation-reversing homeomorphisms; the result of this gluing process will be our $S_{\alpha+1}$. Next, we build the mapping $\varphi_{\alpha+1}$ by mapping each of the two immediate successors $t_1,t_2$ of the node $t$ to the two remaining boundary components of $B_t$. 
Then, as before, given a boundary component \( b \) of \( S_{\al+1} \), there is a unique end \( e_b \) of \( S_{\al+1} \) in the closure of \( b \) in \( \cf(S_{\al+1}) \) (and, once again, notice that the end $e_b$ is short): define \( f_{\al+1}(\varphi_{\alpha+1}^{-1}(b)) = e_b \). 
Finally, as \( S_\al \) is a closed subset of \( S_{\al+1} \), the inclusion map \( S_\al \hookrightarrow S_{\al+1} \) is proper and hence induces a continuous map \( \iota_\al \co \ce(S_\al) \to \ce(S_{\al+1}) \); moreover, it is not hard to see that \( \iota_\al \) is an embedding.
To finish, we require \( f_{\al+1}\circ \iota_\al = \iota_\al \circ f_\al (t) \). 

\begin{figure}[t]
\includegraphics{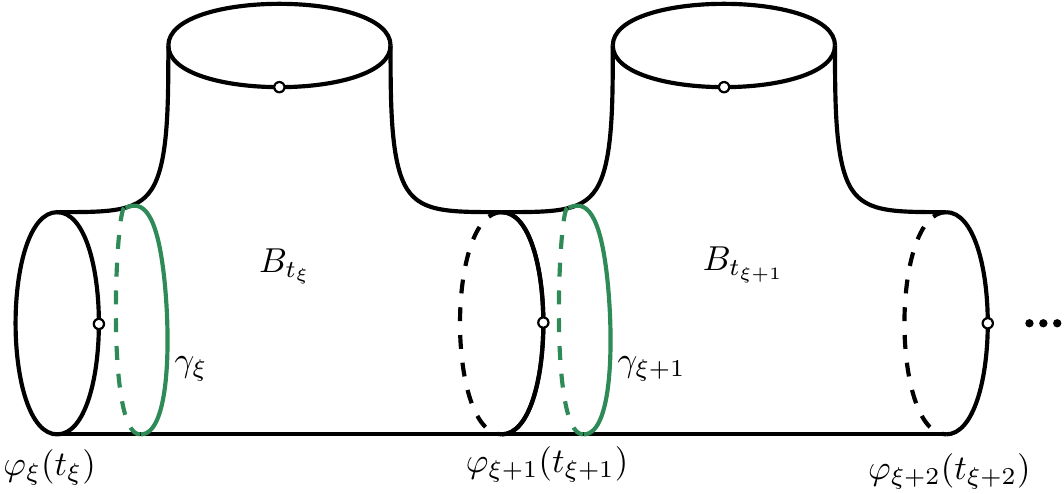}
\caption{Two consecutive copies of \( B \) glued at stage \( \xi \) and \( \xi~+~1 \), with the curves \( \gamma_\xi \) and \( \gamma_{\xi+1} \) as used in the construction of the homeomorphism \( \Phi \co \overline{T}_{\leq \al} \to \ce(X) \).} 
\label{fig:tree}
\end{figure}

It remains to describe the construction of $S_\alpha$, $\varphi_\alpha$, and \( f_\al \) when $\alpha$ is a limit ordinal and all of the $S_\xi$, $\varphi_\xi$, and \( f_\xi \), for $\xi<\alpha$, have already been constructed. 
First let $X=\bigcup_{\xi<\alpha}S_\xi$ and equip \( X \) with the direct limit topology.
We claim that there is a homeomorphism $\Phi:\overline{T}_{\leq\alpha}\rightarrow\mathcal E(X)$. 
Given our setup, we can define \( \displaystyle{\varinjlim_{\xi < \al} \ce(S_\xi)} \) and see that it embeds in \( \ce(X) \), let \( \nu \) denote this embedding.
For \( t \in \overline T_\xi \) with \( \xi < \alpha \), we define \( \Phi(t) = \nu \circ f_\xi(t) \). 
We now need to define \( \Phi \) for \( t \in \overline T_\al \). 
Fix \( t \in \overline T_\al \) and, for each $\xi<\alpha$, let $t_\xi$ be the predecessor of $t$ at level $\xi$. Then  \( B_{t_\xi}  \subset S_{\xi} \) contains three boundary components; one of them corresponds to \( \varphi_{\xi}(t_\xi) \) and another corresponds to \( \varphi_{\xi+1}(t_{\xi+1}) \). Let \( \gamma_{t_\xi} \) be a simple closed curve contained in $B_{t_\xi}$ separating the boundary component of \( B_{t_\xi} \) corresponding to $\varphi_\xi(t_\xi)$ from the other two boundary components of \( B_{t_\xi} \), and set \( V_{t_{\xi+1}} \) to be the component of \( X \ssm \gamma_{t_\xi} \) containing \( \varphi_{\xi+1}(t_{\xi+1}) \) (see Figure~\ref{fig:tree}). 
Let \( \mathcal V = \{ V_{t_{\xi+1}} | \xi<\alpha \} \) and observe that if \( V \in \mathcal V \) then V is unbounded and \( \partial(V) \) is compact. 
Moreover, \( \bigcap_{V\in \mathcal V} V = \varnothing \); hence, \( \hat{\mathcal V} = \{ \hat V | V \in \mathcal V\} \) is a neighborhood basis for a unique end of \( X \), call it \( e_t \) (notice, in particular, that the fact that $\bigcap_{V\in\mathcal V}\hat{V}=\{e_t\}$ implies that $e_t$ is a short end). We define \( \Phi(t) = e_t \). 
Now observe that if \( t \in T_{\xi+1} \) for \( \xi < \al \), then \( \Phi[t^\uparrow] =\hat{V_t} \).
Note that both \( t^\uparrow \) and \( \hat{V_t} \) are clopen, so we may conclude that \( \Phi \co \overline T_{\leq\al} \to \ce(X) \) is an open bijection; in particular, \( \Phi^{-1} \) is a continuous bijection between compact Hausdorff spaces and hence a homeomorphism. 

Now observe that \( X \) is an orientable second-countable 2-manifold of genus 0; in particular, by the classification of second-countable surfaces, \( X \) is homeomorphic to an open subset of the 2-sphere.
We will now give an explicit construction of such an open subset: for the simplicity of using coordinates, let us identify the 2-sphere with the Riemann sphere \( \hat{\mathbb{C}} \) and let \( E \subset \br \) be a second-countable Stone space homeomorphic to \( \ce(X) \).
Let \( \phi \co \ce(X) \to E \) be a homeomorphism and  let $N=(\phi\circ\Phi)[T_\alpha]\subseteq E$. In other words, $N$ is the set of ends of $X$ that correspond to nodes in $T_\alpha$. 

By assumption, \( N \) is countably infinite: 
choose an enumeration \( N = \{ x_n\}_{n\in\bn} \). 
Inductively, choose a collection of Euclidean circles \( \{C_n\}_{n\in\bn} \) such that, for each \( n \in \bn \), \( C_n \cap \br = \{ x_n \} \) and such that the radius \( r_n \) of \( C_n \) satisfies \( r_n < \frac1n \) and such that \( r_n+r_m < |x_n - x_m| \) for \( 1 \leq m < n \).
Define \( F \subset \hat{ \mathbb C} \) to be the complement of 
\[
E \cup \bigcup_{n\in\bn} D_n,
\]
where \( D_n \) is the open disk in \( \mathbb C \) bounded by \( C_n \). 
Note that \( F \) is a bordered manifold in which every boundary component is homeomorphic to \( \br \). 

By the classification of surfaces, the set of manifold points of \( F \) is a manifold homeomorphic to \( X \). By construction, each node $t\in T_\alpha$ is naturally associated to a boundary component $b_t$ of \( F \). Pick pairwise-disjoint copies $B_t$ of the basic building block $B$ (as $t$ varies over $T_\alpha$), and proceed to glue the $B_t$ to $F$ by identifying a boundary component of $B_t$ with the boundary component $b_t$ of $F$ via an orientation-reversing homeomorphism. The resulting manifold will be $S_\alpha$. For each $t\in T_\alpha$, there are two boundary components $b_1,b_2$ of $S_\alpha$ that naturally correspond to $t$, namely the two remaining boundary components of the basic building block $B_t$. If $t_1,t_2$ are the two immediate successors of $t$, we define $\varphi_\alpha(t_1)=b_1$ and $\varphi_\alpha(t_2)=b_2$. Doing this for all $t\in T_\alpha$ finishes the definition of $\varphi_\alpha$. Finally, we proceed to build $f_\alpha$. First notice that $\overline{T}_{\leq(\alpha+1)}=\overline{T}_{\leq\alpha}\cup T_{\alpha+1}$; notice also that for each boundary component $b$ of $S_\al$ there is a unique end $e_b$ that lies in the closure of $b$ in $\mathcal F(S_\al)$ (furthermore, notice once again that $e_b$ is a short end). Now, we have a natural embedding \( \iota:\ce(X) \to \ce(S_\al) \) (induced by the homeomorphism between $X$ and $F$), and every element of $\ce(S_\al)$ that is not in the image of $\iota$ must be one of the $e_b$. We have already constructed a homeomorphism \( \Phi \co \overline{T}_{\leq \al} \to \ce(X) \), so for $t\in\overline{T}_{\leq\alpha}$ we define $f_\alpha(t)=\iota\circ\Phi(t)$, and for $t\in T_{\alpha+1}$ we let $f_\al(\varphi_\alpha^{-1}(b)) = e_b$. This finishes the definition of $f_\alpha$, and hence of the inductive step at the limit ordinal $\alpha$.

We let $M=\bigcup_{\alpha<\omega_1}S_\alpha$, equipped with the direct limit topology. This 2-manifold is of type I, as witnessed by the sequence $\langle U_\alpha\big|\alpha<\omega_1\rangle$, where $U_\alpha=S_\alpha\setminus\partial(S_\alpha)$. 
The construction of the homeomorphism \( \overline T\rightarrow \ce(M) \) is identical to the previous construction of \( \Phi \) above for a limit ordinal. The only difference with the case of a limit $\alpha<\omega_1$ is that, for any node $t\in\overline{T}_{\omega_1}$, the collection $\mathcal V=\{V_{t_{\xi+1}}\big|\xi<\omega_1\}$ is no longer countable and hence we can no longer ensure that $\Phi(t)$ is a short end.
\end{proof}

\begin{Thm}
There exists a 2-manifold of type I, with all ends short, that is not a general bagpipe.
\end{Thm}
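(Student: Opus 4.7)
The plan is to apply Theorem~\ref{treeconstruction} to a binary Aronszajn tree $T$ (which exists by Theorem~\ref{existsaronszajn} together with the remark following it), producing a type I 2-manifold $M$ with $\ce(M)\cong \overline{T}$, and then use the General Bagpipe Theorem to rule out $M$ being a general bagpipe.

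First I would verify that every end of $M$ is short. A node of $\overline{T}$ at level $\omega_1$ would be a closed full chain of $T$ of order-type $\omega_1$---that is, an uncountable branch---and no such branch exists by the Aronszajn hypothesis. Hence $\overline{T}=\overline{T}_{<\omega_1}$, and the last clause of Theorem~\ref{treeconstruction} guarantees that every end of $M$ is short. In particular, $M$ trivially satisfies the EDP.

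Next I would argue that $M$ fails to be metrizable. The construction in Theorem~\ref{treeconstruction} exhibits a canonical sequence $\langle U_\alpha\mid \alpha<\omega_1\rangle$ with $U_\alpha=S_\alpha\ssm\partial(S_\alpha)$ witnessing that $M$ is of type I. At each successor step $\alpha\mapsto\alpha+1$, one glues a fresh pair-of-pants building block $B_t$ onto $S_\alpha$ for every node $t\in T_{\alpha+1}$; since $T$ has height $\omega_1$ the level $T_{\alpha+1}$ is nonempty, so $U_\alpha\subsetneq U_{\alpha+1}$. If $M$ were Lindel\"of, this strictly increasing open cover of $M$ would admit a countable---and, by monotonicity, bounded---subcover, forcing $M=U_\beta$ for some $\beta<\omega_1$, a contradiction. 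Since metrizability coincides with Lindel\"ofness for manifolds, $M$ is non-metrizable.

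Finally, I would appeal to Corollary~\ref{cor:characterize-typeI}: a type I manifold is metrizable if and only if its end space is second countable and every end is short. Since all ends of $M$ are short but $M$ is not metrizable, $\ce(M)$ cannot be second countable. Part (2) of Theorem~\ref{thm:big-bag} then implies that $M$ is not a general bagpipe, completing the proof. The two steps requiring a little care---verifying that $\overline{T}_{\omega_1}=\varnothing$, and that the canonical sequence is strictly increasing at every successor stage---are both immediate from the definition of an Aronszajn tree and the block-by-block nature of the construction in Theorem~\ref{treeconstruction}, so I do not anticipate any substantive obstacle beyond this bookkeeping.
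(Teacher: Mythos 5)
Your proposal is correct, and it reaches the same contradiction as the paper but by a different route at the key step. Where you differ: to show that \( \ce(M) \) fails to be second countable, the paper argues directly on the tree, observing that any countable subset \( X \subseteq \overline{T} \) is contained in \( \overline{T}_{\leq \al} \) for some \( \al < \omega_1 \) (since every node has countable height when \( T \) is Aronszajn), so \( X \) misses the closure of no node at level \( \al + 1 \) and hence \( \overline{T} \) is not separable; second countability then fails, and Theorem~\ref{thm:big-bag} applies. You instead first prove that \( M \) is non-metrizable by exhibiting the strictly increasing exhaustion \( \langle U_\al \rangle \) and noting that Lindel\"ofness would force the exhaustion to stabilize, and then invoke Corollary~\ref{cor:characterize-typeI} in the contrapositive to deduce that \( \ce(M) \) is not second countable. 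Both arguments are sound. The paper's version has the advantage of using only the \emph{statement} of Theorem~\ref{treeconstruction} (namely \( \ce(M) \cong \overline{T} \)) together with elementary tree combinatorics, whereas your non-Lindel\"ofness argument peeks into the internals of that construction (that a fresh block is glued for each node of the nonempty level \( T_{\al+1} \), and that the \( U_\al \) are nested); this is legitimate but makes the proof less modular. Your version buys a reusable observation---that the manifold itself is visibly non-metrizable---and leans on the metrizability characterization already established in Section~\ref{sec:criterion}, which is a pleasant illustration of that theorem in action. One small point of care shared with the paper: the final citation is cleanest as the corollary following Theorem~\ref{thm:big-bag} (a type I general bagpipe has second countable end space), since part (2) of Theorem~\ref{thm:big-bag} is phrased in terms of a long-pipe decomposition rather than the definition of a general bagpipe; this is a cosmetic matter, not a gap.
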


\begin{proof}
By Theorem~\ref{existsaronszajn}, let $T$ be an Aronszajn tree, and let $M$ be the type I surface built from $T$ as in Theorem~\ref{treeconstruction}. Then there is a homeomorphism $\Phi:\overline{T}\rightarrow\mathcal E(M)$, such that, whenever $t\in\overline{T}_{<\omega_1}$, the end $\Phi(t)$ is short.

Since $T$ is Aronszajn, by definition there are no uncountable branches of $T$. That is, if $C\subseteq T$ is a branch, then $C$ is countable, and consequently $\sup\{\h(t)\big|t\in C\}$ is a countable ordinal. This implies that every node in $\overline{T}$ has a countable height, in other words, $\overline{T}_{\omega_1}=\varnothing$ and so $\overline{T}=\overline{T}_{<\omega_1}$. In particular, every end of $M$ (being of the form $\Phi(t)$ for $t\in\overline{T}_{<\omega_1}$) is short.

Now suppose that $X\subseteq\overline{T}$ is a countable set. Since every node of $\overline{T}$ has a countable height, it follows that $\sup\{\h(t)\big|t\in X\}$ must be a countable ordinal; in other words, there is an $\alpha<\omega_1$ such that $X\subseteq\overline{T}_{\leq\alpha}$. Then every node of height $\alpha+1$ fails to be in the closure of $X$ and so $X$ cannot be dense in $\overline{T}$. Hence, $\overline{T}$---and therefore also $\mathcal E(M)$---is not separable, and so it is not second countable either. Then, by Theorem~\ref{thm:big-bag}, $M$ cannot possibly be a general bagpipe as $\mathcal E(M)$ fails to be second countable.
\end{proof}

\section{Pr\"uferization, Mooreization, and spaces of ends}
\label{appendix-prufer}

\begin{center}
Mathieu Baillif, David Fern\'andez-Bret\'on, and Nicholas G.~Vlamis
\end{center}

In this appendix we explore the Pr\"uferization process, and some of the spaces of ends that one can get by Pr\"uferizing distinct sets of points, as well as by doubling or otherwise manipulating the resulting manifolds. 
For this section, we provide sketches of all relevant arguments and leave the details to the reader\footnote{Specifically regarding the details of Pr\"uferizations and Mooreizations, the reader can consult~\cite[Section 1.3]{GauldNonmetrisable}.}. Along the way, we establish the existence of a non-metrizable manifold, all of whose ends are short, with a second-countable end space (Theorem~\ref{thm:counterexample} below); this shows that the type I hypothesis is necessary in Theorems~\ref{mainthm1},~\ref{mainthm2}, and~\ref{mainthm3}, as well as in our characterization of metrizability, Theorem~\ref{thm:characterize} (ii) and Corollary~\ref{cor:characterize-typeI}.

We first describe what it means to Pr\"uferize one point. Consider the open half-plane $H = \{ (x,y)\in \br^2 \big| x>0 \}$ and let $p = (0,y_p)$ for some \( y_p \in \br \).
The underlying set of the Pr\"uferization $P_p(H)$ of $H$ at $p$ is $H\cup\mathbb R_p$, where $\mathbb R_p$ is a copy of the real line. Now, 
given \( a,b \in \br_p \) such that \( a< b \), and given $c\in\mathbb R^+$, we define \( U_{a,b,c} \subset P_p(H) \)  by
\[ U_{a,b,c} = \{ z \in \br_p \big| a < z < b\}  \cup \{ (x,y) \in H \big| y < bx+ y_p \text{, } y >ax+y_p\text{, and }x<c \}. \] 
The topology of $P_p(H)$ is generated by basic open sets of two kinds:
\begin{enumerate}
\item the standard (Euclidean) open sets in $H$, and
\item the sets \( U_{a,b,c} \) for every $a<b$ in $\mathbb R_p$ and $c\in\mathbb R^+$.
\end{enumerate}
This has the effect that every element of $\mathbb R_p$ is ``close'' to the point where $p=(0,y_p)$ ``would be located''. A neighborhood basis of the point $z\in\mathbb R_p$ is given by the sets $U_{a,b,c}$ with $a<p<b$%=V_{a,b}\cap\{(x,y)\in H\big|x<c\}$ (with $a,b\in\mathbb R$, $c\in\mathbb R^+$)
, which intersect $H$ in a small triangle enclosing a line segment of slope $z$ that passes through $p$ (e.g. if $p$ is the origin, then the sequence of points $\langle\left(\frac{1}{n},\frac{z}{n}\right)\big|n\in\mathbb N\rangle \subset H$ converges to $z$ in \( P_p(H) \)).

In order to determine what $\mathcal E(P_p(H))$ is, we need to investigate what kinds of compact subsets the space $P_p(H)$ can have. Note that the sets \( U_{a,b,c} \) are bounded and, furthermore, every compact subset of $P_p(H)$ is either contained in the half-plane (and thus it is an Euclidean compact set) or it is contained in an open set of the form $U_{a,b,c}$. The closure of $U_{a,b,c}$ is equal to 
\[ \cl_{P_p(H)}(U_{a,b,c}) = \{ z \in \br_p \big| a \leq z \leq b\}  \cup \{ (x,y) \in H \big| y \leq bx+ y_p, y \geq ax+y_p, x \leq c \}. \] 
In particular, \( \partial(U_{a,b,c}) \) is a triangle in \( H \) with a missing vertex---the vertex would be at the point \( p=(0,y_p) \), should this point belong to the half-plane---together with the points \( a, b \in \br_p \). 
Note that the complement \( V \) of \( \cl_{P_p(H)}(U_{a,b,c}) \) intersects \( \br_p \) in two disjoint half rays, namely, \( V \cap \br_p = (-\infty, a) \cup (b, \infty) \).

Before we continue to Pr\"uferize more than a single point, it is instructive to note that \( P_p(H) \) is a bordered 2-manifold---homeomorphic to \( [0,1)\times (0,1) \)---and if we double \( P_p(H) \) along its boundary, we obtain a 2-manifold homeomorphic to \( \br^2 \).

Now suppose that we let $Y$ be an arbitrary subset of the $y$-axis and that we Pr\"uferize every single point in $Y$ to obtain the bordered manifold $P_Y(H)$---the construction in the previous paragraph describes the particular case where $Y$ is a singleton. Suppose that $Y$ has at least two points $z,w$ (with $z<w$). Given an $n\in\mathbb N$, consider the set
\begin{equation*}
T_{z,w,n}=\{n_{\mathbb R_z},(-n)_{\mathbb R_w}\}\cup\left\{(x,y)\in H\big|y=nx+z\text{ or }y=-nx+w\text{ and }x\leq\frac{w-z}{2n}\right\},
\end{equation*}
where the number $n_{\mathbb R_z}$ denotes the copy of $n$ within $\mathbb R_z$, and analogously for $(-n)_{\mathbb R_w}$. In other words, $T_{z,w,n}$ consists of the portions of the line through $z$ with slope $n$ and the line through $w$ with slope $-n$ up to their intersection, together with the points $n\in\mathbb R_z$ and $-n\in\mathbb R_w$. 
Given an unbounded open set with compact boundary, it is possible to pass to a subset $U$ whose (topological) boundary is a set of the form $T_{z,w,n}$. For such a set $U$, we have the following two possibilities:
\begin{enumerate}
\item Either $U$ is the unbounded region of the half-plane delimited by its intersection with $T_{z,w,n}$, along with the open half rays $(-\infty,n)\subseteq\mathbb R_z$ and $(-n,\infty)\subseteq\mathbb R_w$, together with the whole line $\mathbb R_x$ whenever $x\in Y$ and $x<z$ or $w<x$---we will denote this open set by  $U_{z,w,n}^\infty$, or
\item $U$ is the bounded region of the half-plane delimited by its intersection with $T_{z,w,n}$, along with the open half rays $(n,\infty)\subseteq\mathbb R_z$ and $(-\infty,-n)\subseteq\mathbb R_w$, together with the whole line $\mathbb R_x$ whenever $x\in Y$ and $z<x<w$---we will denote this open set by  $U_{z,w,n}$.
\end{enumerate}

Working now in $\mathcal E(P_Y(H))$, the set  $\hat{U}_{z,w,n}^\infty$ is a neighborhood of at least one end. In fact, the $\hat{U}_{z,w,n}^\infty$, as we let $z,w,n$ vary, form a neighborhood basis of a unique end, that we will denote by $e_\infty$. 
The set  $\hat{U}_{z,w,n}$ is also a neighborhood of at least one end; the sets $\hat{U}_{z,w,n}$ as we let $w,n$ vary while leaving $z$ fixed\footnote{This includes e.g. the case where $Y$ does not contain points between $z$ and $w$, in which case $n$ is the only variable that actually varies.}, form a neighborhood basis of a unique end, that we will denote with $e_z^+$. Informally speaking, seeing ends as ``ways to escape to infinity'', $e_z^+$ corresponds to going to $+\infty$ in $\mathbb R_z$ (seen as a subset of $P_Y(H)$). Meanwhile, the sets $\hat{U}_{z,w,n}$ as we let $z,n$ vary while leaving $w$ fixed, form a neighborhood basis of an end that we will denote with $e_w^-$ (corresponding informally to going to $-\infty$ in $\mathbb R_z$, seen as a subset of $P_Y(H)$). Furthermore, if $x\in\cl(Y)\setminus Y$, then the sets $\hat{U}_{z,w,n}$ as $z,w,n$ vary with $z<x<w$ (and $z,w\in Y$), also form a neighborhood basis of another end that we will call $e_x$. It is instructive to notice that the ends just described need not be pairwise distinct: for example, if $z,w\in Y$ and there are no $x\in Y$ with $z<x<w$, then $e_z^+=e_w^-$; similar remarks apply to combinations of these and the ends $e_x$ with $x\in\cl(Y)\setminus Y$. The sets $\hat{U}_{z,w,n}^\infty$ contain, in addition to $e_\infty$, all $e_x^+$, $e_x^-$ (for $x\in Y)$, and $e_x$ (for $x\in\cl(Y)\setminus Y$) such that $w<x$ or $x<z$; as well as $e_w^+$ and $e_z^-$. Similarly, $\hat{U}_{z,w,n}$ contains, in addition to $e_z^+$ and $e_w^-$, all $e_x^+$, $e_x^-$ (for $x\in Y$), and $e_x$ (for $x\in\cl(Y)\setminus Y$) such that $z<x<w$.

After the previous analysis, we are able to describe the space of ends $\mathcal E(P_Y(H))$ of the Pr\"uferization of the half-plane $H$ at every point in $Y$. We start by considering an end $e_\infty$ and, for each $z\in\cl(Y)$, two ends $e_z^+$ and $e_z^-$, equipped with the following topology and subject to the following identifications:
\begin{itemize}
\item If $z\in\cl(Y)\setminus Y$ then $e_z^+=e_z^-$ (and hence, we will drop the superindices in this case and simply write $e_z$).
\item The sets $\hat{U}_{z,w,n}^\infty=\{e_\infty,e_w^+,e_z^-\}\cup\{e_x^+,e_x^-\big|x\in\cl(Y)\text{ and }x>w\text{ or }x­<z\}$ where $z,w$ with $z<w$ range over all $Y$ and $n\in\mathbb N$ form a neighborhood basis of $e_\infty$.
\item For $z\in Y$, the sets $\hat{U}_{z,w,n}=\{e_z^+,e_w^-\}\cup\{e_x^+,e_x^-\big|x\in\cl(Y)\text{ and }z<x<w\}$ where $n\in\mathbb N$ and $w\in Y$ with $z<w$ form a neighborhood basis of $e_z^+$.
\item For $z\in Y$, the sets $\hat{U}_{w,z,n}$ where $n\in\mathbb N$ and $w\in Y$ with $w<z$ form a neighborhood basis of $e_z^-$.
\item For $x\in\cl(Y)\setminus Y$, the sets $\hat{U}_{w,z,n}$ where $n\in\mathbb N$ and $w,z\in Y$ with $w<x<z$ form a neighborhood basis of $e_x=e_x^+=e_x^-$.
\item If $z,w\in\cl(Y)$ are such that $z<w$ and there are no further points of $Y$ between $z$ and $w$, then $e_z^+=e_w^-$.
\item If $\cl(Y)$ has a maximum element, $z$, then $e_z^+=e_\infty$.
\item If $\cl(Y)$ has a minimum element, $z$, then $e_z^-=e_\infty$.
\end{itemize}

The above provides a description of $\mathcal E(P_Y(H))$ for a given $Y$. It is easy to see that, regardless of the choice of $Y$, each of the neighborhood bases of points described above can be made countable by taking a suitable subset, and hence every end of $P_Y(H)$ is short.

For an example, if $Y$ is the whole $y$-axis, then the resulting manifold with boundary $P_Y(H)$ (sometimes known as {\em the} Pr\"ufer manifold) has an end space similar to a double real line. One can picture it as two copies of $\mathbb R$, denoted $\mathbb R_+$ and $\mathbb R_-$, where neighborhoods of an $x_+\in\mathbb R_+$ are the union of a half-open interval $[x,z)\subseteq\mathbb R_+$ together with an open interval $(x,z)\subseteq\mathbb R_-$, and neighborhoods of an $x_-\in\mathbb R_-$ are the union of a half-open interval $(z,x]\subseteq\mathbb R_-$ together with an open interval $(z,x)\subseteq\mathbb R_+$; along with a further point at infinity.

%\begin{Rem}\label{rem:endspacecantor}
%If we take $Y=\mathbb Q$, then the manifold $P_Y(H)$ will have an second countable end space with a co-countable subspace homeomorphic to a Cantor set---in particular, this endspace will be second countable. To see this, note that the ends are the $e_z$ for each $z\in\mathbb R\setminus\mathbb Q$, as well as $e_z^+,e_z^-$ for each $z\in\mathbb Q$, and $e_\infty$. The subspace $\{e_z\big|z\in\mathbb R\setminus\mathbb Q\}\cup\{e_\infty\}$ is a compact, second countable, totally disconnected space, and so homeomorphic to the Cantor set; the only points outside of this subspace are the countably many elements $e_z^+,e_z^-$, for $z\in\mathbb Q$.
%[Mathieu says: Brouwer's theorem that any totally disconnected, compact metrizable space without isolated points is a Cantor set ---but I'm not sure that in this case we can claim metrizability---]
%\end{Rem}

As we have remarked, the Pr\"uferization procedure produces a manifold with boundary rather than a manifold. There are two ways to get around this issue and produce an honest manifold. The first one is to attach, instead of a copy of the real line to each element of $Y$, a copy of a closed half-plane (equivalently, one attaches the copies of the real line as described above and then collars the boundary of the resulting bordered manifold). This construction can be found, for example, in~\cite[Appendix A, pp. 466--477]{Spivak} or in~\cite[Example 1.3.1]{HubbardTeichmuller1} (to the authors' knowledge, this construction is originally due to Rad\'o~\cite{RadoUber}). The problem with this construction is that the analogs of the sets $U_{z,w,n}$ no longer separate the manifold. As a result of that, the manifold obtained with this process is always one ended, regardless of the subset $Y$ of the $y$-axis employed for the construction. If the set $Y$ utilized to carry out this procedure is countable, then the resulting manifold $M$ is metrizable; moreover, the unique end of $M$ is short and hence \( M \) is homeomorphic to \( \br^2 \). On the other hand, if $Y$ is uncountable then the resulting manifold $M$ is not only non-metrizable, but it fails to be separable as well; in this case, the unique end of this manifold is neither short nor long.

The other possibility, that produces a more interesting end space, is to take a manifold $P_Y(H)$ and ``double'' it---take two copies of $P_Y(H)$ and identify each of the boundary components corresponding to a given $y\in Y$ in both copies. The reader may verify that the double of $P_Y(H)$ has the same endspace as $P_Y(H)$---basic open neighborhoods are given by ``pairs'' of $\hat{U}_{z,w,n}$, one on each of the two copies of $P_Y(H)$.

We now consider a process, called Mooreization, of a manifold obtained by Pr\"uferization. We take a manifold of the form $P_Y(H)$ and, given a subset $Z\subseteq Y$, for each $z\in Z$ we take the corresponding copy of the real line $\mathbb R_z$ and, within it, identify each $x$ with $-x$; the resulting manifold will be denoted by $\hat{M}(Y,Z)$. Similar to the collaring process, this prevents neighborhoods of the form $U_{z,w,n}$ from separating the manifold whenever $z\in Z$ or $w\in Z$. As a result of this, such a manifold will have the same end space---but it will not be the same manifold---as $P_{Y\setminus Z}(Y)$, the result of having Pr\"uferized only the points in $Y\setminus Z$. The same end space is obtained if one doubles the manifold \( \hat M(Y,Z) \)---note that, in \( \hat M(Y,Z) \), the copies of $\mathbb R$ that correspond to each point $z\in Z$ are no longer boundary components after the identification.
The result of doubling \( \hat M(Y,Z) \) along its boundary components will be denoted by $M(Y,Z)$. The reader should note that, if one carries out this construction with an uncountable $Y$, irrespective of the subset $Z\subseteq Y$, then the manifold just described will not be metrizable, as choosing a point from each of the $\mathbb R^+_z$ ($z\in Y$) yields an uncountable discrete set (which prevents the manifold from being second countable). This leads to the main example of this appendix.

\begin{Thm}\label{thm:counterexample}
There exists a non-metrizable manifold $M$, with all ends short, such that $\mathcal E(M)$ is second countable. Moreover, $\mathcal E(M)$ is a Cantor space.
\end{Thm}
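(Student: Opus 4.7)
The plan is to take the double of the classical Pr\"ufer manifold as our witness: set $M = M(\mathbb R, \varnothing)$ in the notation of this appendix, i.e., the double of $P_{\mathbb R}(H)$. By the discussion above, $\mathcal E(M) = \mathcal E(P_{\mathbb R}(H))$, so throughout the argument I can work with the explicit description of the end space of $P_{\mathbb R}(H)$ in terms of the ends $e_\infty$, $e_z^+$, and $e_z^-$ ($z \in \mathbb R$), with basic neighborhoods of the form $\hat U_{z,w,n}$.

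First I would verify non-metrizability: since $Y = \mathbb R$ is uncountable, picking one interior point from each attached copy $\mathbb R_y$ produces an uncountable closed discrete subset of $M$, so $M$ fails to be second countable and hence is non-metrizable. Next, as already observed in the body of this appendix, each of the ends $e_\infty$, $e_z^+$, and $e_z^-$ admits a neighborhood basis $\{\hat U_{z,w,n}\}$ indexed by countably many parameters once the relevant $z$ and $w$ are restricted to $\mathbb Q$; hence every end of $M$ is short.

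It remains to show $\mathcal E(M)$ is homeomorphic to the Cantor set. By Proposition~\ref{prop:ends}, $\mathcal E(M)$ is a compact Hausdorff zero-dimensional space, so by Brouwer's characterization of the Cantor set it suffices to verify second countability and that there are no isolated points. Second countability follows from the fact that the family $\{\hat U_{z,w,n} : z,w \in \mathbb Q,\ z < w,\ n \in \mathbb N\}$ is a countable basis for $\mathcal E(M)$. For the absence of isolated points, given any end $e$ one must exhibit a nontrivial sequence of other ends converging to $e$: every basic neighborhood $\hat U_{z,w,n}$ of $e_z^+$ contains all $e_x^{\pm}$ with $z < x < w$, so $e_{z + 1/k}^+ \to e_z^+$ as $k \to \infty$; symmetrically $e_{z - 1/k}^- \to e_z^-$; and $e_k^+ \to e_\infty$ as $k \to \infty$.

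The main obstacle is the verification that $\mathcal E(M)$ has no isolated points, which is sensitive to the density properties of $Y$ inside $\cl(Y)$. A naive alternative such as $Y$ a Cantor set would fail: gap endpoints $z < w$ in $Y$ produce isolated ends $e_z^+ = e_w^-$ since some $\hat U_{z,w,n}$ contains no other end. The choice $Y = \mathbb R$ (or any nondegenerate interval) succeeds precisely because every point of $\mathbb R$ is a two-sided accumulation point of $\mathbb R$, so no basic neighborhood in $\mathcal E(M)$ can collapse to a single end.
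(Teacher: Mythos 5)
Your choice of witness does not work, and the gap is precisely at the step you dismiss as routine. For $M = M(\mathbb R,\varnothing)$, i.e.\ the double of $P_{\mathbb R}(H)$ with $Y=\mathbb R$ and nothing Mooreized, we have $\cl(Y)=Y$, so \emph{every} $z\in\mathbb R$ contributes two distinct ends $e_z^+\neq e_z^-$ (the identification $e_z^+=e_z^-$ only occurs for $z\in\cl(Y)\ssm Y$). The family $\{\hat U_{z,w,n} : z,w\in\mathbb Q,\ z<w\}$ is therefore \emph{not} a basis: for irrational $z$, any $\hat U_{q,w,n}$ with $q,w$ rational that contains $e_z^+$ must have $q<z<w$ and hence also contains $e_z^-$, yet $e_z^+$ has basic neighborhoods $\hat U_{z,w,n}$ avoiding $e_z^-$. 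In fact $\ce(P_{\mathbb R}(H))$ is (up to the extra point $e_\infty$) the Alexandroff double arrow space: compact, Hausdorff, zero-dimensional, first countable (so all ends are indeed short, as you say), separable, but \emph{not} second countable and not metrizable. One sees this directly: any basis must contain, for each $z\in\mathbb R$, a set $B_z$ with $e_z^+\in B_z\subseteq\hat U_{z,z+1,1}$, and since $e_{z'}^+\notin\hat U_{z,z+1,1}$ for $z'<z$, the assignment $z\mapsto B_z$ is injective, forcing the basis to be uncountable. So your $M$ satisfies non-metrizability and all-ends-short but fails both remaining conclusions.

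This is exactly why the paper's proof takes $M = M(\mathbb R,\mathbb R\ssm\mathbb Q)$: Pr\"uferizing all of $\mathbb R$ keeps the manifold non-metrizable (an uncountable discrete set survives), but Mooreizing the irrationals destroys the separating property of $U_{z,w,n}$ at irrational $z$, so the end space collapses to that of $P_{\mathbb Q}(H)$. There the rational-indexed $\hat U_{z,w,n}$ and $\hat U^\infty_{z,w,n}$ genuinely form a countable basis, each irrational contributes a single end $e_x=e_x^+=e_x^-$, and density of $\mathbb Q$ in itself rules out isolated points, giving a Cantor set by Brouwer's theorem. Your closing remark correctly identifies that gaps in $Y$ create isolated ends, but the dual failure mode---too many points of $Y$ splitting ends into $\pm$ pairs and destroying second countability---is what sinks $Y=\mathbb R$ with $Z=\varnothing$.
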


\begin{proof}
Let $Y$ be the whole $y$-axis, and let $Z\subseteq Y$ be the set of all irrational numbers. Pr\"uferize each point in $Y$ and then Mooreize each point in $Z$, before doubling along the boundary to obtain the manifold $M=M(Y,Z)=M(\mathbb R,\mathbb R\setminus\mathbb Q)$. Then---as explained in the previous paragraph---$M$ will fail to be metrizable as $Z$ (and consequently also $Y$) is uncountable. The topology of the end space $\mathcal E(M)$ has a basis consisting of the sets $\hat{U}_{z,w,n}$ and $\hat{U}_{z,w,n}^\infty$, where $z,w\in\mathbb Q$ and $z<w$; thus, $\mathcal E(M)$ is second countable. As $\mathcal E(M)$ is also compact and Hausdorff% by Proposition~\ref{prop:ends}
, it follows that it is metrizable (e.g. by~\cite[Theorem 4.2.8]{Engelking}). The reader may easily verify that $\mathcal E(M)$ has no isolated points; moreover it is zero-dimensional (%again 
by Proposition~\ref{prop:ends}), compact and metrizable, and hence it is in fact homeomorphic to the Cantor set by Brouwer's theorem (see e.g.~\cite[Exercise 6.2.A (c)]{Engelking}).
\end{proof}

\bibliographystyle{amsplain}
\bibliography{references}

\providecommand{\bysame}{\leavevmode\hbox to3em{\hrulefill}\thinspace}
\providecommand{\MR}{\relax\ifhmode\unskip\space\fi MR }
% \MRhref is called by the amsart/book/proc definition of \MR.
\providecommand{\MRhref}[2]{%
  \href{http://www.ams.org/mathscinet-getitem?mr=#1}{#2}
}
\providecommand{\href}[2]{#2}
\begin{thebibliography}{10}

\bibitem{AhlforsRiemann}
Lars Ahlfors and Leo Sario, \emph{Riemann surfaces}, Princeton University
  Press, 1960.

\bibitem{GauldMappingClass}
Mathieu Baillif, Satya Deo, and David Gauld, \emph{The mapping class group of
  powers of the long ray and other non-metrisable spaces}, Top. Appl.
  \textbf{157} (2010), 1314--1324.

\bibitem{BrownLocally}
Morton Brown, \emph{Locally flat imbeddings of topological manifolds}, Ann. of
  Math. (2) \textbf{75} (1962), 331--341. \MR{133812}

\bibitem{DevlinConstructibility}
Keith~J. Devlin, \emph{Constructibility}, Springer, Berlin, 1984.

\bibitem{DickmanUniform}
R.~F. Dickman, Jr., \emph{Some characterizations of the {Freudenthal}
  compactification of a semicompact space}, Proc. Amer. Math. Soc. \textbf{19}
  (1968), 631--633.

\bibitem{Engelking}
Ryszard Engelking, \emph{General topology}, 2 ed., Sigma {Series} in Pure
  Mathematics, vol.~6, Heldermann, Berlin, 1989.

\bibitem{FreedmanTopology}
Michael~H Freedman and Frank Quinn, \emph{Topology of 4-manifolds (pms-39)},
  vol.~39, Princeton University Press, 2014.

\bibitem{Freudenthal}
Hans Freudenthal, \emph{{\"Uber} die {Enden} topologischer {R\"aume} und
  gruppen}, Springer, Berlin, 1931.

\bibitem{GauldNonmetrisable}
David Gauld, \emph{Non-metrisable manifolds}, Springer, Singapore, 2014.

\bibitem{GreenwoodThesis}
Sina Greenwood, \emph{Nonmetrisable manifolds}, Ph.D. thesis, University of
  Auckland, 1999.

\bibitem{GreenwoodTrees}
\bysame, \emph{Constructing {Type} {I} nonmetrisable manifolds with given
  {$\Upsilon$}-trees}, Top. Appl. \textbf{123} (2002), 91--101.

\bibitem{HubbardTeichmuller1}
John~Hamal Hubbard, \emph{Teichm\"{u}ller theory and applications to geometry,
  topology, and dynamics. {V}ol. 1}, Matrix Editions, Ithaca, NY, 2006.

\bibitem{Kerekjarto}
B{\'e}la Ker{\'e}kj{\'a}rt{\'o}, \emph{Vorlesungen {\"u}ber topologie}, vol.~I,
  Springer, Berlin, 1923.

\bibitem{KunenSetTheory}
Kenneth Kunen, \emph{Set theory. an introduction to independence proofs.},
  North Holland, Amsterdam, 1992.

\bibitem{NyikosTheory}
Peter Nyikos, \emph{The theory of nonmetrizable manifolds}, Handbook of
  set-theoretic topology, North-Holland, Amsterdam, 1984, pp.~633--684.
  \MR{776633}

\bibitem{PorterWoods}
Jack~R. Porter and R.~Grant Woods, \emph{Extensions and absolutes of
  {Hausdorff} spaces}, Springer, New York, 1988.

\bibitem{RadoUber}
Tibor Rad{\'o}, \emph{{\"U}ber den begriff der {Riemannschen} fl{\"a}che}, Acta
  Litt. Sci. Szeged \textbf{2} (1925), no.~101-121, 10.

\bibitem{RichardsClassification}
Ian Richards, \emph{On the classification of noncompact surfaces}, Trans. Amer.
  Math. Soc. \textbf{106} (1963), 259--269.

\bibitem{Spivak}
Michael Spivak, \emph{A comprehensive introduction to differential geometry},
  3rd. ed., vol.~1, Publish or Perish, Houston, Texas, 1999.

\bibitem{Willard}
Stephen Willard, \emph{General topology}, Addison-Wesley, 1970.

\end{thebibliography}

\end{document}